 %
%
%


\documentclass[10pt, oneside, reqno]{amsart}

\usepackage{epsfig}
\usepackage{amsthm}
\usepackage{amssymb}
\usepackage{amsmath}
\usepackage{amscd}
\usepackage{color}
\usepackage{esint}
\usepackage{enumitem}
\usepackage{bbm}
\usepackage[top=1.1in, bottom=1in, left=1in, right=1in]{geometry}

\usepackage[usenames,dvipsnames]{xcolor}
\usepackage[colorlinks=true, pdfstartview=FitV, linkcolor=blue, citecolor=blue, urlcolor=blue]{hyperref}

%
%
%
%


\newcommand{\rf}[1]{(\ref{#1})}


\def \<{\langle}
\def \>{\rangle}

\newcommand{\bg}{\begin{equation}}
\newcommand{\ed}{\end{equation}}
\newcommand{\bga}{\begin{eqnarray}}
\newcommand{\eda}{\end{eqnarray}}

\def\cbdu{\par{\raggedleft$\Box$\par}}

\newtheorem {Theorem}  {Theorem}

\numberwithin{Theorem}{section}

\newtheorem {Lemma}[Theorem]  {Lemma}
\newtheorem {Proposition}[Theorem]{Proposition}
\theoremstyle{definition}
\newtheorem{Definition}[Theorem]{Definition}
\theoremstyle{remark}
\newtheorem{Remark}[Theorem]{\bf Remark}

\def \l{\lambda}
%

\expandafter\chardef\csname pre amssym.def
at\endcsname=\the\catcode`\@ \catcode`\@=11
\def\undefine#1{\let#1\undefined}
\def\newsymbol#1#2#3#4#5{\let\next@\relax
 \ifnum#2=\@ne\let\next@\msafam@\else
 \ifnum#2=\tw@\let\next@\msbfam@\fi\fi
 \mathchardef#1="#3\next@#4#5}
\def\mathhexbox@#1#2#3{\relax
 \ifmmode\mathpalette{}{\m@th\mathchar"#1#2#3}%
 \else\leavevmode\hbox{$\m@th\mathchar"#1#2#3$}\fi}
\def\hexnumber@#1{\ifcase#1 0\or 1\or 2\or 3\or 4\or 5\or 6\or 7\or 8\or
 9\or A\or B\or C\or D\or E\or F\fi}

\font\teneufm=eufm10 \font\seveneufm=eufm7 \font\fiveeufm=eufm5
\newfam\eufmfam
\textfont\eufmfam=\teneufm \scriptfont\eufmfam=\seveneufm
\scriptscriptfont\eufmfam=\fiveeufm

\catcode`\@=\csname pre amssym.def at\endcsname

\newcounter{remark}
\setcounter{remark}{0}
\newenvironment{remark}
{\medskip \stepcounter{remark} \noindent \textit{Remark
\arabic{section}.\arabic{remark}.}}{\rm \cbdu}

\numberwithin{equation}{section}
\numberwithin{figure}{section}


\def \onethird {\frac{1}{3}}

\newcommand{\supp}{{\mathit supp}\,}

\newcommand{\divv}{{\text {div}}\,}

\newcommand{\e}{\epsilon}

\renewcommand{\l}{\lambda}

\renewcommand{\th}{\theta}
\renewcommand{\b}{\beta}

\newcommand{\R}{\mathbf{R}}

\newcommand{\les}{\lesssim}
\renewcommand{\div}{\mbox{div}}

\newcommand{\Bb}{{\mathcal B}}

\newcommand{\Ff}{{\mathcal F}}

\newcommand{\Nn}{{\mathcal N}}

\newcommand{\Ss}{{\mathcal S}}
\newcommand{\Tt}{{\mathcal T}}

\newcommand{\tl}{\Tilde{\lambda}}
\newcommand{\brho}{\Bar{\rho}}
\newcommand{\bv}{\Bar{v}}

\newcommand\powermone[1]{#1^{-1}}

\newcommand{\wtj}{\widetilde{J}}

\def  \R   {{\mathbb R}}
\def  \Z   {{\mathbb Z}}
\def  \N   {{\mathbb N}}

\def  \T   {{\mathbb T}}

\def  \haf  {{\frac{1}{2}}}
\def  \12  {{\frac{1}{2}}}
\def  \p   {\partial}

\def  \initial   {{\textit{in}}}
\def  \Dq    {\Delta_q}

\def  \sumN  {\sum_{n=N}^\infty}
\def  \td     {\tilde{d}}

\newcommand\twonorm[1]{\lVert#1\rVert_{L^2(\T^3)}}
\newcommand\threenorm[1]{\lVert#1\rVert_{L^3(\T^3)}}
\newcommand\rnorm[1]{\Vert#1\Vert_{L^r(\T^3)}}
\newcommand\Linfnorm[1]{\Vert#1\Vert_{L^\infty(\T^3)}}

\newcommand\twonormRtwo[1]{\lVert#1\rVert_{L^2(\R^2)}}


\def\build#1_#2^#3{\mathrel{\mathop{\kern 0pt#1}\limits_{#2}^{#3}}}

 \begin{document}

\title[Anomalous dissipation]{Anomalous Dissipation at Onsager-Critical Regularity}

\author [Alexey Cheskidov]{Alexey Cheskidov}
\address{Institute for Theoretical Sciences, Westlake University.}
\email{cheskidov@westlake.edu.cn} 
\author [Qirui Peng]{Qirui Peng}
\address{Department of Mathematics, Department of Mathematics, University of California, Santa Barbara,CA 93117,USA}
\email{qpeng9@ucsb.edu}

\thanks{}





\maketitle

\begin{abstract}
We construct solutions to the three-dimensional Euler equations exhibiting anomalous dissipation in finite time through a vanishing viscosity limit. Inspired by \cite{BDL23} and \cite{cheskidov2023dissipation}, we extend the $2\frac{1}{2}$-dimensional constructions and establish an Onsager-critical energy criterion adapted to such flows, showing its sharpness. Moreover, we provide a fully three-dimensional dissipative Euler example, sharp in Onsager’s sense, driven by a slightly rough external force following the framework of \cite{CL21}.

\bigskip
KEY WORDS: Anomalous dissipation; Onsager-critical regularity; Turbulence; Vanishing viscosity limit 

CLASSIFICATION CODE: 35Q30; 76F02; 76D05; 35A02.

\end{abstract}

\section{Introduction}\label{Sec_Intro}

Turbulent fluid motion is well known for exhibiting the phenomenon of \textit{viscous dissipation anomaly}, according to which energy dissipation does not go to zero in the limit of vanishing viscosity. Back in 1941, Kolmogorov’s theory of turbulence postulated that in the limit as the Reynolds number $Re$ goes $\infty$, the dimensionless dissipation rate approaches a positive constant.  This prediction has been extensively confirmed by experiments and simulations of the Navier–Stokes equations at high Reynolds numbers. 

A few years later, Onsager proposed an alternative viewpoint.
In 1949 he conjectured that even solution to the Euler equations could dissipate energy, provided the velocity field was sufficiently irregular. We will refer to this phenomenon as \emph{inviscid anomalous dissipation}.  Onsager predicted $\frac{1}{3}$–H\"older continuity to be the critical regularity threshold for energy conservation, and that observation has guided much of the theoretical progress since. It is known that any Euler weak solution with H\"older exponent $>1/3$ (or in suitable Besov spaces) conserves energy (see \cite{Eyink1994, CET94,CCFS08}), whereas rougher solutions can in principle dissipate energy.

The first examples of such rough solutions were constructed by Scheffer (\cite{Scheffer93}) and Shnirelman (\cite{Shnirelman98}) in the 1990s. In \cite{CDL13}, De~Lellis and Sz\'ekelyhidi introduced a method of convex integration into fluid dynamics, which allowed to produce wild non-conservative solutions with better regularity. This program finally led to Isett’s construction of $C^{1/3-}_{x,t}$ solutions with inviscid anomalous dissipation (\cite{Ise18}). More recent constructions included dissipative solutions, intermittent examples in $H^{1/2-}$, and two-dimensional constructions \cite{MR4649134, BDLS16,NV23,brue2024flexibilitytwodimensionaleulerflows}. Despite these advances, solutions of the Euler equations obtained via convex integration are still not known to arise as vanishing-viscosity limits of physical Navier–Stokes solutions. 
In addition, solution to the Navier-Stokes equations obtained via convex integration do not produce the Kolmogorov dissipation range. This raises a natural question: Can anomalous dissipation in the Euler equations be realized through the vanishing-viscosity limit of Navier–Stokes solutions?

In this work, we address this question by studying the three-dimensional incompressible Navier–Stokes equations (NSE) on the periodic domain $\T^3$, and constructing vanishing-viscosity sequences that exhibit prescribed anomalous dissipation behavior. The NSE are given by the system
\begin{subequations}
\label{eq:NSE}
\begin{align}
\partial_t u^\nu 
+ u^\nu \cdot \nabla u^\nu 
+ \nabla p^\nu 
&= f^\nu + \nu \Delta u^\nu, 
\label{eq:NSE:1}
\\
\nabla \cdot u^\nu 
&=0
,
\label{eq:NSE:2}\\
u^\nu(0) &= u_{\textit{in}} \label{eq:NSE:3}
\end{align}
\end{subequations} \\
on the physical domain $\T^3 \times [0, T] $ and $T > 0$. Here $u^\nu(x,t)$ is the velocity field, $p^\nu$ the pressure, and $f^\nu(x,t)$ is an external force. The positive constant $\nu$ in equations \eqref{eq:NSE} stands for the viscosity coefficient. We consider the initial condition $u_{\textit{in}} \in L^2$ which is divergence free. For smooth solutions $u^\nu$ of the above equations on time interval $[0, T] $, the energy equality is satisfied
\begin{equation} \label{energy_eq_NSE}
\|u^{\nu}(t)\|_{L^2}^2 =  \|u_{\mathrm{in}}\|_{L^2}^2 - 2\nu\int_{0}^{t} \|\nabla u^{\nu}(\tau)\|_{L^2}^2 \, d\tau + 2\int_{0}^{t} (f^{\nu},u^{\nu}) \, d\tau,
\end{equation}\\
for each $0 < t \leq T$, where $(\cdot,\cdot)$ stands for the standard $L^2(\T^3)$ inner product. 

We are interested in the \emph{vanishing viscosity limit}: suppose we have a family of forces $\{f^\nu\}_{\nu > 0}$ which are smooth and satisfy
\[
f^\nu \to f \quad \text{in } L^1([0,T];L^2(\T^3)) \cap L^2([0,T];H^1(\T^3)),
\]
as $\nu \to 0$. We then call this $f$ the \emph{limiting force}. Suppose also that there is a family of solutions $u^\nu$ to \eqref{eq:NSE:1}--\eqref{eq:NSE:3} such that
\[
u^\nu \to u \quad \text{in } C_w([0,\infty);L^2),
\]
as $\nu \to 0$, and that $u$ is a weak solution of the Euler equations
\begin{subequations}
\label{eq:Euler}
\begin{align}
\partial_t u + u \cdot \nabla u + \nabla p &= f, 
\label{eq:Euler:1}\\
\nabla \cdot u &= 0, 
\label{eq:Euler:2}\\
u(0) &= u_{\textit{in}}. 
\label{eq:Euler:3}
\end{align}
\end{subequations}
We will call $u$ the vanishing viscosity limit of $u^\nu$, or the limiting solution.

\begin{Definition}\label{def:weak_sol_Eul}
    We say $u \in C_w([0,T];L^2)$ is a weak solution to the Euler equations $\eqref{eq:Euler}$ if for any smooth divergence free vector fields $\phi(x,t) \in C^\infty_c([0,T];\T^3)$ we have that 
    \begin{equation}\label{def:weak_sol_Eul_1}
    \int_0^T \int_{\T^3} u \cdot \p_t \phi + (u \otimes u ) \cdot \nabla \phi dxdt = \int_{\T^3} u(0,x) \phi(0,x) dx.
     \end{equation}
   Moreover, $u$ is weakly divergence free, i.e.
   \[
       \int_{\T^3} u \cdot \nabla \psi dx = 0
   \]
for any smooth function $\psi(x)$ on $\T^3$.
\end{Definition}
\begin{Remark}
    For NSE we have a similar definition, in fact, we say $u$ is a weak solution to the NSE $\eqref{eq:NSE}$ if $u$ is weakly divergence free and satisfies 
     \begin{equation*}
    \int_0^T \int_{\T^3} u \cdot \p_t \phi + (u \otimes u ) \cdot \nabla \phi + \nu u \cdot \Delta \phi dxdt = 0, \ \ \ \forall \phi \in C_c^\infty ([0,T],\T^3).
     \end{equation*}
\end{Remark}

Given a vanishing-viscosity sequence $u^\nu \to u$ as above, it is natural to compare the energy balances of $u^\nu$ and $u$. 
\begin{Definition}\label{Def_DA}
Let $t > 0$, we define the {\bf viscous dissipation limit} by
\begin{equation}\label{Dissipation_limit}
D(t) := \limsup_{\nu \to 0} \bigg ( 2\nu \int_0^t\twonorm{\nabla u^\nu(s)}^2 ds, 
\bigg )
\end{equation}\\
and the $\textbf{energy limit}$ by 
\begin{equation}\label{Energy_limit}
E(t) := \liminf_{\nu \to 0}{\twonorm{u^\nu(t)}^2}.
\end{equation}
The $\textbf{work}$ done by the limiting force $f$ is defined by
\begin{equation}\label{Work_by_force}
W(t) := \lim_{\nu \to 0} 2\int_0^t (f^{\nu},u^{\nu}) \, d\tau = 2\int_0^t (f,u)d\tau.
\end{equation}
\end{Definition}
The equality of the limit in $\rf{Work_by_force}$ follows from the convergence of the force $f^\nu \to f$ in $L^1([0,T];L^2)$ and the convergence of the solutions $u^\nu \to u$ in $C_w([0,T];L^2)$. Therefore if we take the limit of the energy equality \eqref{energy_eq_NSE} as $\nu \to 0$ and use the weak convergence of $u^\nu$ to $u$, then we obtain
\begin{equation} \label{eq:Intro_Enegry_Balance_1}
0 \leq \twonorm{u(t)}^2 \leq E(t)= \twonorm{u_\initial}^2 +  W(t) -  D(t) ,
\end{equation}
and hence 
\begin{equation} \label{eq:Intro_Energy_Balance_2}
0\leq D(t) \leq \twonorm{u_\initial}^2 - \twonorm{u(t)}^2  + W(t).
\end{equation}
In particular, $D(t)$ is always nonnegative, consistent with the physical fact that energy cannot be created in the limit of vanishing viscosity. The quantity $D(t)$ represents the total energy dissipated by viscosity (up to time $t$) in the vanishing-viscosity limit, while $E(t)$ and $W(t)$ represent the limits of the kinetic energy and work.  Any strict inequality in \eqref{eq:Intro_Energy_Balance_2} indicates a breakdown of energy conservation in the limit Euler dynamics. We can now formalize the notions of anomalous dissipation in this context (cf.\ \cite{cheskidov2023dissipation}):

\begin{Definition}\label{Def_DA_and_AD}
Let $D(t)$ and $W(t)$ given by $\rf{Dissipation_limit}$ and $\rf{Work_by_force}$ respectively, we say the weak solution $u(t)$ to \eqref{eq:Euler} exhibits $\textbf{inviscid anomalous dissipation}$ on $[0,t]$ if
\[
 \twonorm{u_\initial}^2 -\twonorm{u(t)}^2  +W(t) > 0.
\]
We say that a family $\{u^\nu\}_{\nu > 0}$ of solutions to \eqref{eq:NSE}  exhibit $\textbf{viscous dissipation anomaly}$ on $[0,t]$ if $D(t)>0$.
\end{Definition}

\noindent From $\eqref{eq:Intro_Energy_Balance_2}$ we see that viscous dissipation anomaly directly implies invicid anomalous dissipation. However, the converse is not true, see \cite{cheskidov2023dissipation} for the counterexample. This counterexample is constructed using the approach of Bru\`e and De Lellis in \cite{BDL23}. The idea of the scheme to construct a two-dimensional vector field $v^\nu$ that mixes a scalar $\theta^\nu$, which plays a role of the third component of the constructed solution $u^\nu$ to the \eqref{eq:NSE}, i.e. a $2\haf$ dimensional solution. In this paper, we will explore further on such construction by using an exponential time scale instead of a polynomial one in \cite{BDL23} and \cite{cheskidov2023dissipation}.

\begin{Remark} Note that not every inviscid anomalous dissipative solutions is a vanishing viscosity limit. Indeed, one can construct inviscid anomalous dissipative solutions via convex integrations, see for example \cite{Ise18,BDLS16,NV23}. Moreover, a highly intermittent 2D construction in \cite{brue2024flexibilitytwodimensionaleulerflows}, is known not to be a vanishing viscosity limit, which follows from \cite{CLL16}.
The question of whether a vanishing viscosity limit solution $u$ of the equation $\rf{eq:Euler}$ can also be obtained by convex integration is remained opened.
\end{Remark}

\noindent
{\bf Previous construction of viscous dissipation anomaly.}
Consider the passive scalar advection--diffusion equation on $\T^3$,
\[
\partial_t \theta^{\kappa} + v\cdot\nabla \theta^{\kappa} = \kappa \Delta \theta^{\kappa},
\]
where $v$ is divergence-free and (in the anomalous regime) is typically only sub-Lipschitz in space.
We say that $\{\theta^{\kappa}\}_{\kappa>0}$ exhibits \emph{anomalous diffusion} (or a \emph{viscous dissipation anomaly} of scalar variance) on $[0,1]$ if 
\[
\limsup_{\kappa\to0}\;\kappa\int_0^1\|\nabla\theta^{\kappa}(t)\|_{L^2(\T^3)}^2\,dt >0.
\]
If $v$ is uniformly Lipschitz, then the left-hand side vanishes. Hence anomalous diffusion requires sufficiently rough drifts.
This effect is a central prediction of the Obukhov--Corrsin theory and is modeled by the stochastic Kraichnan framework \cite{Obukhov1949,Corrsin1951,Kraichnan1968}.
Rigorous deterministic examples include the drift smooth except at one time constructed by Drivas--Elgindi--Iyer--Jeong \cite{DrivasElgindiIyerJeong2022} (see also the further refinement of the result \cite{CCS23, EL24}) and a recent example of Armstrong--Vicol yielding order-one dissipation for periodic H\"older drifts $v\in C^0_t C^\alpha_x$ with any $\alpha<1/3$ \cite{AV25}.
Also, Burczak, Sz\'ekelyhidi, and Wu \cite{BurczakSzekelyhidiWu2024Euler} show that anomalous dissipation can occur with $v$ itself a weak Euler solution of regularity $C^{1/3-}$; see also the 2025 overview \cite{ArmstrongVicol2025Overview}.

In \cite{BDL23}, Bru\`e and De Lellis have shown that viscous dissipation anomaly can be realized for the forced $3$D Navier--Stokes equations within the $(2+\frac12)$-dimensional ansatz $u^\nu=(v^\nu,\theta^\nu)$ and $f^\nu=(g^\nu,0)$, so that $v^\nu$ solves a forced $2$D Navier--Stokes system while $\theta^\nu$ satisfies a passive advection--diffusion equation driven by $v^\nu$. 
The forcing $g^\nu$ is tuned so that $v^\nu$ remains uniformly regular (in particular bounded in $C^\alpha$ for every $\alpha<1$) while still generating a quasi-self-similar mixing dynamics for $\theta^\nu$ that transfers scalar variance to smaller and smaller scales before diffusion becomes effective. 
This mixing mechanism is implemented by iterating smooth self-similar mixers (in the spirit of Alberti--Crippa--Mazzucato~\cite{ACM16}) on a sequence of shrinking time intervals accumulating at a finite time. 
As a consequence, the scalar dissipation remains order one, i.e.\ $\liminf_{\nu\to0}\nu\int_0^1\|\nabla\theta^\nu(t)\|_{L^2}^2\,dt>0$, and since $\theta^\nu$ is a component of $u^\nu$ this yields a genuine kinetic-energy dissipation anomaly for the $3$D Navier--Stokes solutions in the vanishing-viscosity limit.

The dissipation–anomaly mechanism introduced by Bru\`e and De Lellis \cite{BDL23} has been refined in recent works.  Bru\`e, Colombo, Crippa, De Lellis and Sorella combined the $2\frac12$–dimensional Navier–Stokes architecture of \cite{BDL23} with the quantitative passive–scalar mixing scheme of Colombo, Crippa and Sorella \cite{CCS23} to construct vanishing–viscosity solutions of the forced three–dimensional Navier–Stokes equations which exhibit anomalous dissipation while remaining uniformly bounded in $L^3_t C^{1/3-\varepsilon}_x$ for any $\varepsilon>0$ \cite{BCCD24}.  Building on the same framework, Johansson and Sorella answered further questions of \cite{BDL23} by producing four–dimensional forced Navier–Stokes flows whose dissipation measures have a nontrivial absolutely continuous component in time, via a $3+\tfrac12$–dimensional reduction and new anomalous–diffusion estimates \cite{JohanssonSorella26}.  In a different direction, in \cite{cheskidov2023dissipation}, a quasi–self–similar $2\frac12$–dimensional construction was used to generate families of Navier–Stokes solutions displaying a wide range of vanishing–viscosity scenarios—total and partial viscous dissipation anomaly, inviscid anomalous dissipation without viscous dissipation anomaly, and examples with absolutely continuous dissipation measures—thus showing the flexibility of the Bru\`e–De Lellis paradigm.

\subsection*{Main contributions}
We prove three results:
\begin{itemize}
\item[(i)] An adapted flux criterion for $2\frac{1}{2}$-dimensional Euler solutions (Theorem~\ref{main_Theorem_1}), guaranteeing energy conservation under sharp Besov control on $(v,\rho)$.
\item[(ii)] A vanishing-viscosity sequence with \emph{total} viscous anomalous dissipation, whose limit remains borderline in Onsager scale (Theorem~\ref{main_Theorem_2}).
\item[(iii)] A fully three-dimensional construction with rough forcing, zero anomalous work and total viscous dissipation, whose limiting solution lies in the Onsager borderline space (Theorem~\ref{main_Theorem_3}).
\end{itemize}

\subsection*{Methods and novelty}
We refine the perfect-mixing paradigm of \cite{BDL23,cheskidov2023dissipation} by introducing an \emph{exponential} time scale, enabling precise dissipation bookkeeping while preserving Onsager-critical spectral bounds. We also exploit a weighted Besov characterization near the critical scale.

\subsection*{Organization}
Section~\ref{Sec_Prelim} collects notation and analytical tools. Section~\ref{Sec_Main_Thm} states our main results. Sections~\ref{sec:proof_main_Theorem_1}--\ref{sec:proof_main_Theorem_3} provide the corresponding proofs.


\section{Preliminaries}\label{Sec_Prelim}
\subsection{Notations}\label{Notation} We use the notation $A \lesssim B$ if there exists an absolute constant $C > 0$ such that $A \leq CB$. If there exists $C_1$ and $C_2$ such that $C_1B \leq A \leq C_2 B$, we write $A \sim B$.

\subsection{Cartesian $2\frac{1}{2}$-dimensional solutions}

The so-called $2\frac{1}{2}$-dimensional (or ``two-and-a-half dimensional'') flows will be used in the proofs of Theorems~\ref{main_Theorem_1} and~\ref{main_Theorem_2}. Such flows are defined as three-dimensional velocity fields that depend only on two spatial variables. Specifically, consider
\begin{align}\label{def_2.5D_flow}
    u(x_1,x_2,x_3) = 
    \begin{pmatrix}
        u_1(x_1,x_2) \\
        u_2(x_1,x_2) \\
        u_3(x_1,x_2)
    \end{pmatrix}
    :=
    \begin{pmatrix}
        v(x_1,x_2) \\
        \rho(x_1,x_2)
    \end{pmatrix},
\end{align}
where $v(x_1,x_2) = (u_1,u_2)$ is a two-dimensional vector field and $\rho(x_1,x_2)$ is a scalar function. Under this ansatz, the three-dimensional incompressible Euler equations reduce to
\begin{align}
    \partial_t v + v \cdot \nabla v + \nabla p &= 0, \label{eq:2.5D_flow_1}\\
    \partial_t \rho + v \cdot \nabla \rho &= 0. \label{eq:2.5D_flow_2}
\end{align}
An important observation is that the third component $\rho$ satisfies a purely advective (transport) equation. Similarly, for the three-dimensional incompressible Navier–Stokes equations, the same structure persists:
\begin{align}
    \partial_t v + v \cdot \nabla v + \nabla p &= \nu \Delta v, \label{eq:2.5D_flow_3}\\
    \partial_t \rho + v \cdot \nabla \rho &= \nu \Delta \rho. \label{eq:2.5D_flow_4}
\end{align}
\subsection{Littlewood--Paley decomposition}\label{LP_Intro}
We briefly summarize the Littlewood--Paley theory; for more comprehensive background, we refer to the books by Bahouri, Chemin, and Danchin~\cite{bahouri2011fourier} and by Grafakos~\cite{grafakos2008classical}. To begin, let $\chi \in C_0^\infty(\R^3)$ be a nonnegative, radial function such that 
\begin{equation} \label{eq:xi}
\chi(\xi) :=
\begin{cases}
1, & \text{for } |\xi| \le \tfrac{3}{4},\\
0, & \text{for } |\xi| \ge 1.
\end{cases}
\end{equation}
We then define
\[
\varphi(\xi) := \chi(\xi/2) - \chi(\xi),
\]
and set
\begin{equation*}
\varphi_q(\xi) :=
\begin{cases}
\varphi(\eta_q^{-1}\xi), & q \ge 0,\\
\chi(\xi), & q = -1,
\end{cases}
\end{equation*}
where $\eta_q = 2^q$. The family $\{\varphi_q\}_{q \ge -1}$ thus forms a dyadic partition of unity in the frequency space. Given a tempered distribution $u$ on the three-dimensional torus $\T^3 = [0,L]^3$, we define its $q$th Littlewood--Paley projection by
\[
  \Delta_q u(x) := \sum_{k\in\Z^3} \hat{u}(k)\,\varphi_q(k)\,e^{i\frac{2\pi}{L} k \cdot x},
\]
where $\hat{u}(k)$ denotes the $k$th Fourier coefficient of $u$.  
Note that $\Delta_{-1} u = \hat{u}(0)$ corresponds to the mean (low-frequency) part of $u$.  
With this notation, we have the decomposition
\[
u = \sum_{q=-1}^\infty \Delta_q u,
\]
in the sense of distributions. We also define the low-frequency cut-off operator
\[
\Ss_q u := \sum_{p=-1}^q \Delta_p u.
\]
In this paper (except for section \ref{Sec_Second_construction}), however, we do not adopt the standard dyadic partition of unity. Instead, we use a frequency sequence defined by
\begin{equation}\label{def:lambdaq}
\lambda_q := 5^q,
\end{equation}
that is, we replace $\eta_q$ above with $\lambda_q$. The corresponding Littlewood--Paley decomposition remains valid after rescaling. \\\\
We recall here the useful Bernstein's inequality for each block of the Littlewood-Paley decomposition.
\begin{Lemma}\label{lemma:Bernstein}(Bernstein's inequality) 
Let $d$ be the spacial dimension, $r\geq s\geq 1$ and $k\geq 0$. Then for all tempered distributions $u$, 
\bg\label{Bern1}
\lambda_q^{k}\|\Dq u\|_{r} \lesssim \|\nabla^k \Dq u\|_{r}\lesssim \lambda_q^{k+d(\frac{1}{s}-\frac{1}{r})}\|\Dq u\|_{s}.
\ed
\end{Lemma}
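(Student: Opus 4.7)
The plan is to use Fourier-analytic rescaling: for $q\geq 0$ the projection $\Dq u$ is spectrally supported in an annulus $\{|\xi|\sim\lambda_q\}$, so convolving it with any kernel whose symbol equals $1$ on that annulus reproduces $\Dq u$ exactly. I would fix once and for all an auxiliary bump $\tilde\varphi\in C_c^\infty(\R^d)$ supported in a slightly larger annulus with $\tilde\varphi\equiv 1$ on $\supp\varphi$, and rely throughout on the elementary dilation identity $\|\lambda_q^d \phi(\lambda_q\cdot)\|_{L^t} = \lambda_q^{d(1-1/t)}\|\phi\|_{L^t}$.

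For the upper bound, I would express each derivative $\p^\alpha\Dq u$ with $|\alpha|=k$ as $K_q\ast\Dq u$, where $K_q$ is the Fourier multiplier with symbol $(i\xi)^\alpha\tilde\varphi(\lambda_q^{-1}\xi)$. A change of variables shows $K_q(x)=\lambda_q^{d+k}\Psi_\alpha(\lambda_q x)$ for a Schwartz function $\Psi_\alpha$ independent of $q$. Young's convolution inequality with exponent $t$ satisfying $1/t = 1 - (1/s-1/r)$ then gives $\|\p^\alpha\Dq u\|_r \leq \|K_q\|_t\,\|\Dq u\|_s$, and the rescaling identity converts $\|K_q\|_t$ into the claimed factor $\lambda_q^{k+d(1/s-1/r)}\|\Psi_\alpha\|_t$. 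Summing over $|\alpha|=k$ yields the right-hand inequality.

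For the lower bound, the crucial ingredient is the spectral gap $|\xi|\gtrsim\lambda_q$, which makes the symbol $|\xi|^{-k}\tilde\varphi(\xi)$ smooth and compactly supported. Its inverse Fourier transform $\Psi$ is Schwartz, and after rescaling $H_q(x) = \lambda_q^{d-k}\Psi(\lambda_q x)$ satisfies $\Dq u = H_q\ast|\grad|^k\Dq u$. Young's inequality with exponent $1$ yields $\|\Dq u\|_r \leq \lambda_q^{-k}\|\Psi\|_1\,\| |\grad|^k \Dq u\|_r$, and $\| |\grad|^k \Dq u\|_r \lesssim \|\grad^k\Dq u\|_r$ is immediate (in fact an equivalence, by the same multiplier trick). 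The case $q=-1$ is trivial because $\Delta_{-1}u$ is a constant.

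No serious obstacle arises: the argument is entirely soft once one commits to the spectral-localization and rescaling philosophy. The only bookkeeping worth flagging is (i) that the spectral gap away from $0$ is essential to invert $|\grad|^k$, since otherwise $|\xi|^{-k}$ would be singular at the origin; (ii) that each self-similar dilation contributes exactly the factor $\lambda_q^{d(1-1/t)}$ and one must verify this lines up with the claimed exponent $d(1/s-1/r)$; and (iii) that the argument is stated on $\R^d$ but applied on $\T^d$, which requires only a standard periodization of $K_q$ and $H_q$ and does not alter the relevant $L^t$ norms up to a constant depending only on $\tilde\varphi$.
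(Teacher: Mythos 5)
Your argument is correct and is the standard textbook proof of Bernstein's inequality (spectral localization, a rescaled Schwartz kernel reproducing $\Delta_q u$, and Young's convolution inequality with $1/t=1-(1/s-1/r)$); the paper does not prove this lemma at all but quotes it from Bahouri--Chemin--Danchin and Grafakos, where precisely this argument appears, so there is nothing to compare beyond noting that your periodization remark correctly handles the passage from $\R^d$ to $\T^3$ and that the argument is insensitive to the base $5$ in $\lambda_q=5^q$. One small correction: the case $q=-1$ is trivial only for the \emph{upper} bound; since $\Delta_{-1}u$ is the (generally nonzero) mean, $\nabla^k\Delta_{-1}u=0$ for $k\ge 1$ and the lower bound $\lambda_{-1}^k\|\Delta_{-1}u\|_{L^r}\lesssim\|\nabla^k\Delta_{-1}u\|_{L^r}$ fails unless the mean vanishes --- as you yourself note, the reverse inequality genuinely needs the spectral gap away from the origin, i.e.\ $q\ge 0$, which is the implicit convention in the lemma as stated.
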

\vspace{1em}
\subsection{Besov spaces and Onsager's critical energy classes}

For $s \in \R$ and $1 \le p, r \le \infty$, the Besov space $\Bb^s_{p,r}$ consists of all tempered distributions $u$ such that
\begin{equation}\label{def_Besov1}
\|u \|_{\Bb^s_{p,r}} := 
\left( \sum_{q=-1}^\infty \lambda_q^{rs} \| \Delta_q u \|_{L^p(\T^3)}^r \right)^{1/r}
< \infty,
\end{equation}
for $r < \infty$. In the case $r = \infty$, we set
\begin{equation}\label{def_Besov2}
\|u \|_{\Bb^s_{p,\infty}} := 
\sup_{q \ge -1} \left( \lambda_q^{s} \| \Delta_q u \|_{L^p(\T^3)} \right) < \infty.
\end{equation}

\noindent We further define the following subspace of $\Bb^{1/3}_{3,\infty}$:
\begin{equation}\label{def_Besov3}
\Bb^{1/3}_{3,c_0} := \left \{ u \in \Bb^{1/3}_{3,\infty} : \limsup_{q \to \infty}  \lambda_q^{1/3} \| \Delta_q u \|_{L^3} = 0 \right \}
\end{equation}

\vspace{0.5em}
\noindent A celebrated result by Constantin, E, and Titi~\cite{CET94} states that if a weak solution $u$ of the incompressible Euler equations satisfies
\[
u \in L^3\big([0,T]; \Bb^{\frac{1}{3}+}_{3,\infty}\big),
\]
then $u$ conserves energy, i.e., it does not exhibit inviscid anomalous dissipation. Conversely, via convex integration techniques, it was shown by Isett~\cite{Ise18} that there exists weak solutions
\[
u \in C^0\big([0,T]; C^{\frac{1}{3}-}\big)
\]
that do exhibit inviscid anomalous dissipation. Here we use the standard shorthand notation:
\begin{equation}\label{def_Besov_pm}
\Bb^{\frac{1}{3}+}_{3,\infty} := \bigcup_{\varepsilon > 0} \Bb^{\frac{1}{3}+\varepsilon}_{3,\infty},
\qquad
\Bb^{\frac{1}{3}-}_{3,\infty} := \bigcap_{\varepsilon > 0} \Bb^{\frac{1}{3}-\varepsilon}_{3,\infty}.
\end{equation}
In~\cite{CCFS08}, Cheskidov, Constantin, Friedlander, and Shvydkoy introduced a sharper sufficient condition ensuring energy conservation for weak Euler solutions.
  
\begin{Theorem}[\cite{CCFS08}]\label{Thm:energy_conservation_CCFS08}
Suppose that $u$ is a weak solution to the Euler equations~\eqref{eq:Euler} on $[0,T]$ satisfying
\begin{equation}\label{Thm:energy_conservation_CCFS08_1}
\lim_{q \to \infty} \int_0^T \lambda_q \| \Delta_q u(t) \|_{L^3}^3 \, dt = 0.
\end{equation}
Then the kinetic energy of $u$ is conserved on $[0,T]$.  
In particular, if $u$ is a weak solution such that
\[
u \in L^3\big([0,T]; \Bb^{1/3}_{3,c_0}(\T^3)\big),
\]
then $u$ conserves energy.
\end{Theorem}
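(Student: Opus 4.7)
The plan is to obtain an energy identity for the low-frequency truncation $S_q u$ and then pass to the limit $q \to \infty$, showing that the hypothesis forces the nonlinear energy flux across frequency $\lambda_q$ to vanish.

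First I would apply the projector $S_q$ to the Euler equation and use $S_q u$ as a test function in the resulting (smooth in $x$) equation. Since $S_q u$ is divergence-free, the pressure term drops, and incompressibility kills $\int (S_q u \cdot \nabla)S_q u \cdot S_q u\,dx$, so one obtains the truncated energy balance
$$\tfrac{1}{2}\|S_q u(t)\|_{L^2}^2 - \tfrac{1}{2}\|S_q u(0)\|_{L^2}^2 = \int_0^t \Pi_q(s)\,ds,$$
with flux
$$\Pi_q = \int_{\T^3} \nabla S_q u : \bigl[\,S_q u \otimes S_q u - S_q(u \otimes u)\,\bigr]\,dx.$$
Since $u \in C_w([0,T];L^2)$, the left-hand side converges to $\tfrac12(\|u(t)\|_{L^2}^2 - \|u(0)\|_{L^2}^2)$ as $q \to \infty$, so energy conservation reduces to $\int_0^t \Pi_q(s)\,ds \to 0$.

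The main step is the flux estimate, which I would attack using a Bony paraproduct decomposition of $u \otimes u$ into low--low, low--high, and high--high pieces. The crucial point is that the commutator $S_q u \otimes S_q u - S_q(u\otimes u)$ is spectrally localized near and above frequency $\lambda_q$. Combining H\"older's inequality with Bernstein's inequality (Lemma~\ref{lemma:Bernstein}), the derivative $\nabla S_q u$ supplies the single factor of $\lambda_q$, and distributing three $L^3$ factors across the triple product yields the sharp Onsager-scaling estimate
$$|\Pi_q(t)| \lesssim \sum_{p \geq q - N_0} \lambda_p\, \|\Delta_p u(t)\|_{L^3}^3,$$
for some absolute $N_0$. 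Integrating in time and invoking the hypothesis $\lim_{q\to\infty}\int_0^T \lambda_q\|\Delta_q u(s)\|_{L^3}^3\,ds = 0$ (together with summability of the tail, which is where $\lambda_q = 5^q$ being geometric matters) then gives $\int_0^t \Pi_q \to 0$, completing the proof of energy conservation.

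For the corollary with $u \in L^3([0,T];\Bb^{1/3}_{3,c_0})$, the definition \eqref{def_Besov3} gives $\lim_{q\to\infty}\lambda_q^{1/3}\|\Delta_q u(t)\|_{L^3} = 0$ for a.e.\ $t$, while the pointwise bound $\lambda_q\|\Delta_q u(t)\|_{L^3}^3 \leq \|u(t)\|_{\Bb^{1/3}_{3,\infty}}^3 \in L^1([0,T])$ supplies a dominating function; dominated convergence then delivers hypothesis \eqref{Thm:energy_conservation_CCFS08_1}. The main obstacle is the paraproduct/commutator flux bound: one must arrange the three-linear estimate so that exactly one derivative and exactly three $L^3$ factors appear, with the surviving frequency locked near $\lambda_q$, so that the critical Onsager exponent $1/3$ emerges naturally and the hypothesis can be directly applied.
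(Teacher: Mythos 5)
Your overall architecture (truncated energy balance for $\Ss_q u$, then showing the time-integrated flux vanishes, plus dominated convergence for the $\Bb^{1/3}_{3,c_0}$ corollary) is the same as in \cite{CCFS08}, and indeed the same scheme this paper carries out in Section~\ref{sec:proof_main_Theorem_1} for the $2\tfrac12$-D analogue. However, your key flux estimate is not correct as stated. The commutator $\Ss_q(u\otimes u)-\Ss_q u\otimes\Ss_q u$ is \emph{not} spectrally localized near and above $\lambda_q$: writing $\Pi_q=\int_{\T^3}\big[\Ss_q(u\otimes u)-\Ss_q u\otimes\Ss_q u\big]\cdot\nabla\Ss_q u\,dx$, the factor $\nabla\Ss_q u$ contributes $\sum_{p\le q}\lambda_p\|\Delta_p u\|_{L^3}$, and the Constantin--E--Titi-type remainder $r_q(u,u)$ involves increments $u(x-y)-u(x)$ on scale $\lambda_q^{-1}$, whose $L^3$ norm squared is controlled by $\sum_{p\le q}(\lambda_p/\lambda_q)^{2}\|\Delta_p u\|_{L^3}^2+\sum_{p>q}\|\Delta_p u\|_{L^3}^2$. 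Thus \emph{every} frequency $p\le q$ enters, tamed only by exponentially decaying weights, and the sharp bound is a two-sided convolution
\begin{equation*}
\int_0^T|\Pi_q|\,dt\ \lesssim\ \big(K* c\big)(q),\qquad c_p:=\int_0^T\lambda_p\|\Delta_p u\|_{L^3}^3\,dt,
\end{equation*}
(with $K$ an exponentially decaying $\ell^1$ kernel, handled via H\"older in $t$ and $q$), exactly as in the $A$ and $B$ estimates with the kernel $K$ in Section~\ref{sec:proof_main_Theorem_1}. One cannot simply discard $p<q-N_0$: a priori $\|\Delta_p u\|_{L^3}\lesssim\lambda_p^{1/2}\|u\|_{L^2}$ is all that is known below scale $q$, and a large low-frequency block paired with a small block near $q$ produces cross terms not dominated by $\sum_{p\ge q-N_0}\lambda_p\|\Delta_p u\|_{L^3}^3$.

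There is a second gap in your concluding step even if one granted your one-sided bound: hypothesis \eqref{Thm:energy_conservation_CCFS08_1} only says the individual terms $c_p\to0$; it gives no summability of the tail $\sum_{p\ge q-N_0}c_p$ (take $c_p\sim 1/p$), and the geometric growth $\lambda_q=5^q$ does not help, since no decay in $p$ is built into $c_p$. The correct, and essential, final ingredient of the CCFS argument is instead that convolution with an $\ell^1$ kernel maps a bounded sequence tending to zero to a sequence tending to zero; boundedness of $\{c_p\}$ follows from finiteness of each term (via $u\in L^\infty_t L^2$ and Bernstein) together with the hypothesis. Repairing your sketch therefore requires (i) replacing the claimed localization by the two-sided exponentially weighted bound, and (ii) replacing ``summability of the tail'' by this kernel-convolution limit lemma; with those changes it becomes the argument of \cite{CCFS08}, which the present paper cites rather than reproves.
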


\noindent To understand the meaning of condition~\eqref{Thm:energy_conservation_CCFS08_1}, we recall the truncated energy equality
\begin{equation}\label{eq:truncated_energy_equality}
\frac{1}{2}\frac{d}{dt} \| \Ss_q u(t) \|_{L^2}^2 
=  \int_{\T^3} \Ss_q (u \otimes u) \cdot \nabla \Ss_q u \, dx 
=: \Pi_q,
\end{equation}
where $\Pi_q$ is referred to as the 'energy flux' through the frequency shell $q$.  
Integrating over time from $0$ to $T$ gives
\[
\| \Ss_q u(T) \|_{L^2}^2 - \| \Ss_q u(0) \|_{L^2}^2
= 2 \int_0^T \Pi_q(t) \, dt.
\]
The proof of Theorem~\ref{Thm:energy_conservation_CCFS08} relies on the estimate
\[
\int_0^T |\Pi_q(t)| \, dt 
\lesssim \int_0^T \lambda_q \| \Delta_q u(t) \|_{L^3}^3 \, dt,
\]
from which condition~\eqref{Thm:energy_conservation_CCFS08_1} follows.

\vspace{1em}
\noindent In this paper, we will construct dissipative weak solutions of the Euler equations that are sharp in the sense of Theorem~\ref{Thm:energy_conservation_CCFS08}.  
More precisely, our weak solution $u$ satisfies
\begin{equation}\label{eq:sharp_Onsagers_condition}
\limsup_{q \to \infty} 
\int_0^T \lambda_q \| \Delta_q u(t) \|_{L^3}^3 \, dt 
\lesssim 1.
\end{equation}
We also introduce here the following weighted Besov space:
\begin{Definition}\label{def_weighted_Besov}
Let $\{a_n\}_{n \in \N} \in \ell^1_+$, where $\ell^1_+$ denotes the collection of non-negative sequences in $\ell^1$, we define the following weighted Besov space $\Bb^{s,a_n}_{p,\infty}$ by
\begin{equation}\label{def_weighted_Besov_1}
\|u \|_{\Bb^{s,a_n}_{p,\infty}} :=  \sum_{q = -1}^\infty \Big ( a_{q+2}\l^s_q \| \Delta_q u \|_{L^p(\T^3)} \Big) < \infty.
\end{equation}
In addition, denote $\Tt$ by the subfamily of $\ell^1_+$ such that 
\begin{equation}\label{def_weighted_Besov_2}
\Tt := \Big \{ \{ a_n \}_{n\in \N} \in \ell^1: 0< a_n < 1, a_{n+1} < a_n \text{ and }\lim_{n \to \infty}{\frac{a_{n+1}}{a_n}} = 1, \ \  n \in \N  \Big \}. 
\end{equation}
\end{Definition}
\begin{Proposition}\label{prop_weighted_Onsager_class}
For any sequence $\{a_n \}_{n\in \N} \in \Tt$, we have the following inclusions:
\begin{equation}\label{prop_weighted_Onsager_class_1}
\Bb^\frac{1}{3}_{3,\infty} \subset \Bb^{\frac{1}{3}, a_n}_{3,\infty} \subset \Bb^{\frac{1}{3}-}_{3,\infty}.
\end{equation}
Furthermore, we have 
\begin{equation}\label{prop_weighted_Onsager_class_2}
\Bb^\frac{1}{3}_{3,\infty} = \bigcap_{b_n \in l_{+}^1} \Bb^{\frac{1}{3},b_n}_{3,\infty}.
\end{equation}
\end{Proposition}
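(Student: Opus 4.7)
The strategy is to exploit the fact that $\|\cdot\|_{\Bb^{s,a_n}_{p,\infty}}$ is really a weighted $\ell^1$ sum of Littlewood--Paley norms, so it sits between the $\ell^\infty$ norm (yielding $\Bb^{1/3}_{3,\infty}$) and the full scale of $\ell^\infty$ norms with slightly worse regularity (yielding $\Bb^{1/3-}_{3,\infty}$). The first inclusion in \eqref{prop_weighted_Onsager_class_1} is immediate: for $u\in\Bb^{1/3}_{3,\infty}$ one has $\lambda_q^{1/3}\|\Dq u\|_{L^3}\le\|u\|_{\Bb^{1/3}_{3,\infty}}$ for every $q$, hence
\[
\|u\|_{\Bb^{1/3,a_n}_{3,\infty}}
=\sum_{q\ge-1} a_{q+2}\,\lambda_q^{1/3}\|\Dq u\|_{L^3}
\le \|u\|_{\Bb^{1/3}_{3,\infty}}\sum_{q\ge-1}a_{q+2}<\infty,
\]
since $\{a_n\}\in\ell^1$.

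For the second inclusion in \eqref{prop_weighted_Onsager_class_1}, the central step is the following growth-rate lemma: if $\{a_n\}\in\Tt$, then for every $\varepsilon>0$ there exists $C_\varepsilon>0$ with $a_n\ge C_\varepsilon\,\lambda_n^{-\varepsilon}$ for all $n\ge1$. Indeed, the assumption $a_{n+1}/a_n\to1$ together with $\lambda_{n+1}/\lambda_n=5$ implies that for any fixed $\varepsilon>0$ we eventually have $a_{n+1}/a_n\ge 5^{-\varepsilon}$, and iterating this from some threshold gives the claim after absorbing the finitely many initial terms into the constant. Granting this, each term in the weighted norm dominates a non-weighted term at slightly worse regularity: from $a_{q+2}\lambda_q^{1/3}\|\Dq u\|_{L^3}\le\|u\|_{\Bb^{1/3,a_n}_{3,\infty}}$ we get
\[
\lambda_q^{1/3-\varepsilon}\|\Dq u\|_{L^3}
\le \frac{\lambda_q^{-\varepsilon}}{a_{q+2}}\,\|u\|_{\Bb^{1/3,a_n}_{3,\infty}}
\lesssim_\varepsilon \|u\|_{\Bb^{1/3,a_n}_{3,\infty}},
\]
so $u\in\Bb^{1/3-\varepsilon}_{3,\infty}$ for every $\varepsilon>0$, proving the second inclusion. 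This growth-rate step is the main point of the proposition.

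For \eqref{prop_weighted_Onsager_class_2}, the $\subset$ direction follows verbatim from the first-inclusion argument (which used only $\{b_n\}\in\ell^1_+$, not membership in $\Tt$). The $\supset$ direction is a duality/contradiction argument: suppose $u\notin\Bb^{1/3}_{3,\infty}$, so $c_q:=\lambda_q^{1/3}\|\Dq u\|_{L^3}$ is unbounded. Choose a subsequence $q_k\uparrow\infty$ with $c_{q_k}\ge k^2$, and define $b_n\in\ell^1_+$ by $b_{q_k+2}:=1/k^2$ and $b_n:=0$ otherwise; then $\sum_n b_n<\infty$ but
\[
\|u\|_{\Bb^{1/3,b_n}_{3,\infty}}
\ge \sum_{k\ge1} b_{q_k+2}\,c_{q_k}
\ge \sum_{k\ge1}1=\infty,
\]
so $u\notin\bigcap_{b_n\in\ell^1_+}\Bb^{1/3,b_n}_{3,\infty}$, completing the proof by contrapositive. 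The only non-routine ingredient is the slow-decay lemma for sequences in $\Tt$; everything else is bookkeeping on dyadic blocks.
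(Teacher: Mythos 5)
Your proof is correct and follows essentially the same route as the paper: the first inclusion by bounding the weighted $\ell^1$ sum with $\bigl(\sum_k a_k\bigr)\sup_q\lambda_q^{1/3}\|\Dq u\|_{L^3}$, the second by using the ratio condition $a_{n+1}/a_n\to1$ to show $(a_{q}\lambda_q^{\varepsilon})^{-1}$ stays bounded (the paper phrases your slow-decay lemma as a ratio-test bound on $b_q=(a_{q+3}\lambda_q^{\varepsilon})^{-1}$), and the identity \eqref{prop_weighted_Onsager_class_2} by the same contrapositive construction of a sparse $\ell^1_+$ weight along a subsequence where $\lambda_{q_k}^{1/3}\|\Delta_{q_k}u\|_{L^3}$ blows up. No gaps; your bookkeeping of the index shift $a_{q+2}$ is in fact slightly more careful than the paper's.
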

\begin{proof}
Suppose that $u \in \Bb^\frac{1}{3}_{3,\infty}$, then 
\[
\sum_{q \geq -1} \Big ( a_{q+2}\l^{\frac{1}{3}}_q \threenorm{\Dq u} \Big) \leq \Big(\sum_{k=1}^\infty a_k \Big) \Big (\sup_{q \geq 1} \l^{\frac{1}{3}}_q \threenorm{\Dq u} \Big) < \infty, 
\]
since $\{a_n\}_{n\in \N} \in \Tt$. This implies the first inclusion in \eqref{prop_weighted_Onsager_class_1}. Fix $\e > 0$, the second inclusion is obtained by 
\begin{align*}
    \sup_{q \geq -1} \Big(  \l_q^{\frac{1}{3}-\e} \threenorm{\Dq u} \Big) &< \sup_{q \geq -1} \bigg( \Big( \frac{a_{q+2}}{a_{q+3}} \Big) \l^{\frac{1}{3}-\e}_q \threenorm{\Dq u} \bigg) \\ 
    & \leq \Big(\sum_{q\geq -1} a_{q+2} \l_q^{\frac{1}{3}}  \threenorm{\Dq u}\Big) \Big( \sup_{q \geq -1} (a_{q+3} \l^\e_q)^{-1}\Big) < \infty.
\end{align*}
The boundedness of the sequence $b_q := (a_{q+3}\l^\e_q)^{-1}$ follows directly from 
\[
\lim_{q \to \infty} \frac{b_{q+1}}{b_q} = \l^{-\e} < 1.
\]
To establish $\eqref{prop_weighted_Onsager_class_2}$, first notice that from $\eqref{prop_weighted_Onsager_class_1}$ we immediately have the forward inclusion. Now suppose if $u \notin B^{\frac{1}{3}}_{3,
\infty}$, then 
\[
\sup_{q\geq -1} \Big ( \l^\frac{1}{3}_q \threenorm{\Dq u} \Big) = \infty,
\]
hence for $k \in \N$ we can find a subsequence $\{ q_k \}_{k \in \N}$ such that 
\[
\l^\frac{1}{3}_{q_k} \threenorm{\Delta_{q_k} u } > k^3.
\]
Now take $b_n \in \ell^1_+$ such that
\begin{align*}
b_n = \begin{cases}
\frac{1}{k^2}, \qquad n = q_k, \\
0, \qquad otherwise.
\end{cases}
\end{align*}
Therefore we have that 
\[
\|u \|_{B^{\frac{1}{3},b_n}_{3,\infty}} = \sum_{k =1}^\infty \Big( b_{q_k} \l^\frac{1}{3}_{q_k} \threenorm{\Delta_{q_k} u } \Big) = \sum_{n =1 }^\infty \frac{1}{n} = \infty
\]
and therefore 
\[
u \notin \bigcap_{b_n \in \ell^1_+} \Bb^{\frac{1}{3},b_n}_{3,\infty}.
\]
\end{proof}

\section{main Theorem}\label{Sec_Main_Thm}
\noindent We present the following main results:
\begin{Theorem}\label{main_Theorem_1}
Let $u$ be a $2\haf$-D weak solutions of the form \eqref{def_2.5D_flow} to the Euler equations \eqref{eq:Euler} on $[0,T]\times \R^2$. If u satisfies the following condition:
\begin{equation}\label{main_Theorem_1_1}
\lim_{q \to \infty} \int_0^T \l_q \|\Dq v(t) \|^3_{L^3} dt = 0
\end{equation}
and
\begin{equation}\label{main_Theorem_1_2}
\lim_{q \to \infty} \int_0^T \l_q \|\Dq v(t) \|_{L^3} \|\Dq \rho (t) \|^2_{L^3} dt = 0
\end{equation}
with the external force $f \in L^1([0,T];L^2(\R^2))$, then the energy of $u$ is conserved on $[0,T]$. In particular, if $v \in L^3([0,T];B^{\onethird}_{3,c_0}(\R^2))$ and $\rho \in L^3([0,T];B^{\onethird}_{3,\infty}(\R^2))$, then $u$ conserves energy on $[0,T]$.
\end{Theorem}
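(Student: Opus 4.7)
The plan is to split the kinetic energy pointwise as $|u|^2 = |v|^2 + \rho^2$, so that
$\|u(t)\|_{L^2(\R^2)}^2 = \|v(t)\|_{L^2(\R^2)}^2 + \|\rho(t)\|_{L^2(\R^2)}^2$,
decompose the external force as $f=(g,h)$ with $g$ its horizontal part (acting on $v$) and $h$ its third component (acting on $\rho$), and prove the energy balance for each piece separately. Under the $2\tfrac12$-D ansatz, $v$ solves the forced 2D Euler system $\partial_t v + v\cdot\nabla v + \nabla p = g$, while $\rho$ satisfies the forced passive transport equation $\partial_t \rho + v\cdot\nabla \rho = h$.

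For $v$, hypothesis \eqref{main_Theorem_1_1} is precisely the Cheskidov--Constantin--Friedlander--Shvydkoy criterion \eqref{Thm:energy_conservation_CCFS08_1}, so Theorem~\ref{Thm:energy_conservation_CCFS08} (extended routinely to include the absolutely convergent forcing contribution $\int_0^t(g,v)\,ds$) yields
\[
 \|v(t)\|_{L^2}^2 \;=\; \|v(0)\|_{L^2}^2 + 2\int_0^t (g,v)\,ds.
\]
For $\rho$ I would derive the corresponding truncated energy identity analogously: applying $\Ss_q$ to the transport equation, pairing with $\Ss_q\rho$, using $\nabla\cdot v=0$ to rewrite $v\cdot\nabla\rho=\nabla\cdot(v\rho)$, and integrating by parts, one obtains
\[
 \tfrac12\tfrac{d}{dt}\|\Ss_q\rho\|_{L^2}^2 \;=\; \Pi_q^\rho(t) + (\Ss_q h,\Ss_q\rho),\qquad
 \Pi_q^\rho(t) := \int_{\R^2}\bigl[\Ss_q(v\rho)-(\Ss_q v)(\Ss_q\rho)\bigr]\cdot\nabla\Ss_q\rho\,dx,
\]
where the non-commutator piece $\int (\Ss_q v)(\Ss_q\rho)\cdot\nabla\Ss_q\rho\,dx = \tfrac12\int \Ss_q v\cdot\nabla(\Ss_q\rho)^2\,dx$ vanishes because $\nabla\cdot\Ss_q v=0$. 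Energy conservation for $\rho$ therefore reduces to showing $\int_0^T|\Pi_q^\rho(t)|\,dt\to 0$ as $q\to\infty$.

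The main obstacle is to prove the CCFS-type bound
\[
 |\Pi_q^\rho(t)| \;\lesssim\; \sum_{|p-q|\le N}\lambda_p\,\|\Delta_p v(t)\|_{L^3}\,\|\Delta_p\rho(t)\|_{L^3}^2 \;+\; \bigl(\text{high--high remainder of the same shape}\bigr),
\]
which matches hypothesis \eqref{main_Theorem_1_2} exactly. I would obtain this by applying Bony's paraproduct decomposition $v\rho = T_v\rho + T_\rho v + R(v,\rho)$ to expand the commutator $\Ss_q(v\rho)-(\Ss_q v)(\Ss_q\rho)$ into its low--high, high--low, and high--high pieces, and then invoke Bernstein's inequality (Lemma~\ref{lemma:Bernstein}) to absorb the derivative on $\Ss_q\rho$ into a factor $\lambda_q$ while regrouping the remaining $L^3$ norms as $\|\Delta_q v\|_{L^3}\|\Delta_q\rho\|_{L^3}^2$. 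The key structural observation is that the transport flux is linear in $v$ and quadratic in $\rho$, which is precisely the weighting appearing in \eqref{main_Theorem_1_2}.

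With the flux bound in hand, \eqref{main_Theorem_1_2} forces $\int_0^T|\Pi_q^\rho(t)|\,dt\to 0$, and passing $q\to\infty$ in the truncated identity---using $\Ss_q\rho(t)\to\rho(t)$ in $L^2$ and $\Ss_q h\to h$ in $L^1_t L^2$---produces $\|\rho(t)\|_{L^2}^2 = \|\rho(0)\|_{L^2}^2 + 2\int_0^t(h,\rho)\,ds$. Added to the $v$-balance, this gives total energy conservation for $u$. The Besov corollary is then immediate: if $v\in L^3_t \Bb^{1/3}_{3,c_0}$, then $\lambda_q^{1/3}\|\Delta_q v(t)\|_{L^3}\to 0$ pointwise in $t$ with uniform domination $\|v(t)\|_{\Bb^{1/3}_{3,\infty}}\in L^3(0,T)$, so dominated convergence delivers \eqref{main_Theorem_1_1}; for \eqref{main_Theorem_1_2} one writes the integrand as $(\lambda_q^{1/3}\|\Delta_q v\|_{L^3})\,(\lambda_q^{1/3}\|\Delta_q\rho\|_{L^3})^2$ and applies H\"older in $t$ with exponents $(3,3/2)$ together with the same pointwise vanishing of the $v$-factor.
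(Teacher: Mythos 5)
Your overall architecture coincides with the paper's: split the energy as $\|v\|_{L^2}^2+\|\rho\|_{L^2}^2$, dispose of $v$ via Theorem~\ref{Thm:energy_conservation_CCFS08} and hypothesis \eqref{main_Theorem_1_1}, and reduce the $\rho$-part to showing that the transport flux $\Pi_q=\int \Ss_q(v\rho)\cdot\nabla\Ss_q\rho\,dx$ vanishes in $L^1_t$ as $q\to\infty$ (the paper does exactly this, using the Constantin--E--Titi type splitting \eqref{proof:energy_conservation_twohalf_7} rather than a Bony paraproduct, and the force is handled the same way). The problem is that the one step carrying all the analytic content --- the flux bound --- is only announced in your proposal, and the form you announce is not what the decomposition delivers. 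The low--high and high--low pieces of $\Ss_q(v\rho)-\Ss_q v\,\Ss_q\rho$ produce cross-shell terms of the type $\l_q\,\|\Delta_p v\|_{L^3}\,\|\Delta_{p'}\rho\|_{L^3}\,\|\Delta_{p''}\rho\|_{L^3}$ with the $v$-frequency $p$ and the $\rho$-frequencies $p',p''$ genuinely different (e.g.\ $v$ at low frequency against $\rho$ near frequency $\l_q$); Bernstein cannot move the $\rho$-frequencies onto the $v$-shell, so these terms cannot be regrouped into the diagonal quantity $\l_p\|\Delta_p v\|_{L^3}\|\Delta_p\rho\|_{L^3}^2$ localized in $|p-q|\le N$. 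What one actually obtains, as in the paper, is a convolution-type bound $|\Pi_q|\lesssim (K*d^2)^{1/2}(q)\,(K*\td^2)(q)$ with $d_p=\l_p^{1/3}\|\Delta_p v\|_{L^3}$, $\td_p=\l_p^{1/3}\|\Delta_p\rho\|_{L^3}$ and an $\ell^1$ kernel $K$, in which the $v$- and $\rho$-shells remain decoupled.

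Consequently your claim that the bound ``matches hypothesis \eqref{main_Theorem_1_2} exactly'' is not justified: passing from the cross-shell bound to the purely diagonal hypothesis \eqref{main_Theorem_1_2} requires an additional argument (H\"older in time together with a Jensen-type estimate over shells exploiting the decay of $K$), which also draws on hypothesis \eqref{main_Theorem_1_1} and on uniform-in-$q$ control of $\int_0^T\td_q^{\,3}\,dt$; this is precisely where the difficulty of the theorem sits, and it is skipped in your outline (the paper compresses this passage too, but its kernel machinery is set up exactly to run it). The surrounding items in your proposal --- the truncated identity for $\rho$, the vanishing of $\int \Ss_q v\,\Ss_q\rho\cdot\nabla\Ss_q\rho\,dx$ by incompressibility, the inclusion of the force in both balances, and the deduction of the Besov corollary by dominated convergence and H\"older with exponents $(3,3/2)$ in time --- are correct and agree with the paper, but as it stands the proposal does not prove the key estimate, so the $\rho$-part of the argument is incomplete.
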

\begin{remark}
The condition \eqref{main_Theorem_1_2} is an adaptation to the condition $\eqref{Thm:energy_conservation_CCFS08_1}$ in the case of $2\haf$-dimensional solutions. The idea follows from the observation of the nonlinear flux term of the full Euler equations
\[
\int_{\R^3} (u \cdot \nabla) u \cdot u dx 
\]
is reduced to 
\[
\int_{\R^2} (v\cdot \nabla)v \cdot v dx + \int_{\R^2} (v\cdot \nabla\rho) \rho dx.
\]
\end{remark}
\vspace{1em}
\begin{Theorem}\label{main_Theorem_2}
Given any $\{a_n\}_{n \in \N} \in \Tt$ defined in \eqref{def_weighted_Besov_2}, there is a countable family of smooth solutions to $(\ref{eq:NSE})$, denoted by $\{u^{\nu_n} (t)\big \}_{\nu_n > 0}$, on time interval $[0,1]$, with a positive sequence of viscosity $\nu_n \to 0$ as $n \to \infty$ under the presence of the external forces $f^{\nu_n} \to f $ in $L^1([0,1]; L^2(\T^3))$. Furthermore, the family of solutions satisfy some initial condition $u^{\nu_n}(0)= u_\initial \in L^2(\T^3)$, $u^{\nu_n} \to u$ strongly in $L^2$ at time $t \in \left[0,1 \right )$ and weakly at $t = 1$ for some $2\haf$-dimensional weak solution $u\in L^3([0,1];\Bb^{\frac{1}{3},a_n}_{3,\infty}(\T^3))$ to \eqref{eq:Euler}. The weak solution $u$ of the form $\eqref{def_2.5D_flow}$ also satisfies that
\begin{equation}\label{main_Theorem_2_1}
\limsup_{q \to \infty}\int_0^1 \l_q \threenorm{\Dq v(t)} \threenorm{\Dq \rho(t)}^2 dt \les 1
\end{equation}
and 
\begin{equation}\label{main_Theorem_2_2}
\lim_{q \to \infty}\int_0^1 \l_q \threenorm{\Dq v(t)}^3 dt =0.
\end{equation}
In addition, the family $\{u^{\nu_n}\}_{\nu_n > 0}$ exhibits total viscous anomalous dissipation, i.e. 
\begin{equation}\label{main_Theorem_2_3}
\lim_{n \to \infty} 2\nu_n \int_0^1 \twonorm{\nabla u^{\nu_n}(t)}^2 dt = \twonorm{u_\initial}^2.
\end{equation}
\end{Theorem}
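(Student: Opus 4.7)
\medskip
\noindent\textbf{Strategy.} The plan is to realize each $u^{\nu_n}$ within the $2\haf$-dimensional ansatz of Section~\ref{Sec_Prelim}, writing $u^{\nu_n} = (v^{\nu_n}, \theta^{\nu_n})$ where $v^{\nu_n}$ solves a forced 2D Navier--Stokes system and $\theta^{\nu_n}$ is passively advected and diffused by $v^{\nu_n}$. Following the mixing-cascade paradigm of \cite{BDL23,cheskidov2023dissipation}, I first build the Euler limit $u = (v,\rho)$ and then obtain $u^{\nu_n}$ as vanishing-viscosity approximations with a carefully tuned sequence $\nu_n \downarrow 0$. The essential new ingredient relative to the polynomial scheme of \cite{BDL23,cheskidov2023dissipation} is an \emph{exponential} time-scale decomposition $[0,1) = \bigsqcup_q I_q$, with $|I_q| = \tau_q$ geometric in $q$, which will allow the flux \eqref{main_Theorem_2_1} to be saturated while keeping the weighted sum $\sum_q a_{q+2}\lambda_q^{1/3}\|\Delta_q u\|_{L^3}$ finite.

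\medskip
\noindent\textbf{Construction of the Euler limit.} On each $I_q = [t_q, t_{q+1}]$ with $t_q \uparrow 1$, define $v$ to be a rescaled copy of a fixed smooth $2$D divergence-free mixer at spatial scale $\lambda_q^{-1} = 5^{-q}$, with amplitude $\sim \lambda_q^{-1/3}$ so that $\lambda_q^{1/3}\|\Delta_q v(t)\|_{L^3} \sim 1$ on $I_q$ and negligible otherwise. For $\rho$, the mixer transfers the bulk of the scalar quasi-self-similarly from scale $\lambda_{q-1}$ to $\lambda_q$ during $I_q$, so $\|\Delta_q \rho(t)\|_{L^3} \sim 1$ on $I_q$. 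A direct computation then gives
\[
\int_0^1 \lambda_q \|\Delta_q v\|_{L^3}\|\Delta_q \rho\|_{L^3}^2\,dt \;\sim\; \lambda_q^{2/3}\,\tau_q,
\]
so the choice $\tau_q \sim \lambda_q^{-2/3} = 5^{-2q/3}$ -- geometric and summable to $1$ after normalization -- saturates \eqref{main_Theorem_2_1}, while the same choice yields $\int_0^1 \lambda_q \|\Delta_q v\|_{L^3}^3\,dt \sim \tau_q \to 0$, establishing \eqref{main_Theorem_2_2}. The forcing $f = (g,0)$ sustaining $v$ is then defined by $g := \partial_t v + v\cdot\nabla v + \nabla p$ and is, by construction, uniformly bounded in $L^1_t L^2_x$.

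\medskip
\noindent\textbf{Viscous approximations and limits.} With initial data $\theta^{\nu_n}(0) = \rho(0)$ and a mollification $v^{\nu_n}(0)$ of $v(0)$, standard 2D Navier--Stokes stability gives $v^{\nu_n} \to v$ strongly in $L^\infty_t L^2_x$ on each $[0,1-\delta]$, and the passive advection--diffusion with smooth drift yields $\theta^{\nu_n} \to \rho$ on the same interval. Choosing $\nu_n$ to decay slowly enough that diffusion is negligible on $I_q$ for $q \le N(\nu_n)$ but becomes dominant for $q > N(\nu_n)$, with $N(\nu_n) \uparrow \infty$, forces $u^{\nu_n}(1) \rightharpoonup 0$ weakly in $L^2$ while preserving the initial $L^2$ mass through the truncated cascade. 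The energy equality \eqref{energy_eq_NSE}, combined with $\|u^{\nu_n}(1)\|_{L^2}^2 \to 0$ and work $W^{\nu_n}(1) \to 0$, then delivers the total anomalous dissipation \eqref{main_Theorem_2_3}. For the weighted Besov inclusion, at each $t \in I_q$ only $O(1)$ dyadic blocks contribute nontrivially with $\lambda_q^{1/3}\|\Delta_q u(t)\|_{L^3} \sim 1$, so $\|u(t)\|_{\Bb^{1/3,a_n}_{3,\infty}} \lesssim a_{q+2}$, and $\int_0^1 \|u(t)\|_{\Bb^{1/3,a_n}_{3,\infty}}^3\,dt \lesssim \sum_q a_{q+2}^3 \tau_q < \infty$ follows from $\{a_n\}\in\Tt \subset \ell^1_+$.

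\medskip
\noindent\textbf{Main obstacle.} The principal difficulty is the simultaneous enforcement of three constraints: (i) the Onsager-critical saturation \eqref{main_Theorem_2_1}; (ii) the strictly sub-critical behavior \eqref{main_Theorem_2_2} for $v$ alone; and (iii) \emph{total}, rather than partial, viscous dissipation \eqref{main_Theorem_2_3}. The exponential scheme $\tau_q \sim \lambda_q^{-2/3}$ appears to be the unique scaling that achieves (i)--(ii) at once, but it simultaneously narrows the window in which $\nu_n$ may sit to also produce full $L^2$ loss by $t=1$ without leaking dissipation at intermediate scales and spoiling the weighted Besov bound. A careful bookkeeping of the error terms created when gluing the quasi-self-similar mixers $w_q$ at the interfaces $t_q$, together with a matched choice of the terminal cutoff $N(\nu_n)$ against the weight $\{a_n\}$, will be the technical crux of the argument.
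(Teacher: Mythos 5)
There is a genuine gap, and it sits exactly at the weighted Besov membership $u\in L^3([0,1];\Bb^{\frac13,a_n}_{3,\infty})$. Your claim that on $I_q$ ``only $O(1)$ blocks contribute with $\lambda_q^{1/3}\|\Delta_q u(t)\|_{L^3}\sim 1$, so $\|u(t)\|_{\Bb^{1/3,a_n}_{3,\infty}}\lesssim a_{q+2}$'' is inconsistent with your own flux computation. Since the scalar carries the entire unit of energy to be dissipated, transport preserves $\|\rho\|_{L^2}=1$ with $\|\rho\|_{L^\infty}=O(1)$, so on $I_q$ the active block obeys $\|\Delta_q\rho(t)\|_{L^3}\sim 1$ --- this is precisely what you used to get the flux $\sim\lambda_q^{2/3}\tau_q$. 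But then $\lambda_q^{1/3}\|\Delta_q u(t)\|_{L^3}\sim\lambda_q^{1/3}$, not $O(1)$, hence $\|u(t)\|_{\Bb^{1/3,a_n}_{3,\infty}}\gtrsim a_{q+2}\lambda_q^{1/3}$ for $t\in I_q$, and with your $\{a_n\}$-independent choice $\tau_q\sim\lambda_q^{-2/3}$ one gets
\begin{equation*}
\int_0^1\|u(t)\|^3_{\Bb^{1/3,a_n}_{3,\infty}}\,dt\;\gtrsim\;\sum_q a_{q+2}^3\,\lambda_q\,\tau_q\;=\;\sum_q a_{q+2}^3\,\lambda_q^{1/3}\;=\;\infty,
\end{equation*}
because every $\{a_n\}\in\Tt$ satisfies $a_{n+1}/a_n\to 1$ and therefore decays subexponentially, so $a_{q+2}^3\lambda_q^{1/3}\to\infty$. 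Thus your fixed exponential scale fails the very conclusion that involves the given weight; in particular the assertion that $\tau_q\sim\lambda_q^{-2/3}$ is ``the unique scaling'' achieving \eqref{main_Theorem_2_1}--\eqref{main_Theorem_2_2} is not correct.

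The construction must be adapted to the weight: the paper takes $t_q=1-(\lambda_q a_q)^{-1}$, i.e.\ $|I_q|\sim(\lambda_q a_q)^{-1}$, with the Alberti--Crippa--Mazzucato mixer rescaled in time so the drift has amplitude $\sim a_q$ on $I_q$ (Theorem~\ref{drift_velocity_est} gives $\|\bar v_q\|_{C^0}\lesssim\lambda_q^{-1}$, and the compression factor is $\lambda_q a_q$). Then $\int_0^1\lambda_q\|\Delta_q v\|_{L^3}\|\Delta_q\rho\|_{L^3}^2\,dt\sim\lambda_q\,a_q\,(\lambda_q a_q)^{-1}\sim1$, while $\int_0^1\lambda_q\|\Delta_q v\|_{L^3}^3\,dt\sim a_q^2\to0$, and the $\rho$-contribution to the weighted norm integrates to $\sim\sum_q a_q^2<\infty$, so all three requirements hold simultaneously. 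Separately, your total-dissipation step is only a gesture: ``$\nu_n$ decays slowly enough'' and ``$W^{\nu_n}(1)\to0$'' need the quantitative ingredients used in the paper --- the choices $\nu_m\sim a_m^2\lambda_m^{-1}$ and cutoff frequency $\Lambda_m\sim a_m^{1/2}\lambda_m$, the stability estimate of Lemma~\ref{lemma:vanish_viscosity_esti} comparing $\theta^m$ with the inviscid $\rho^m$, the mixing bound $\|\rho^m\|_{\dot H^{-1}}\lesssim\lambda_m^{-1}$ to make the low modes at time $t^m_{m+1}$ of size $a_m^{1/2}$, and an extra terminal window of length $\tau_m=m(\nu_m\Lambda_m^2)^{-1}$ on which the drift vanishes so the heat flow kills the high modes (factor $e^{-2m}$); zero limiting work then follows from Lemma~\ref{lemma:zero_work_by_force} because the $v$-part is uniformly bounded in $L^2_tH^1$ and vanishes on that window. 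These can be supplied, but the weighted Besov failure above is a structural flaw of the proposed scaling, not a bookkeeping issue.
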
 
\vspace{1em}
\begin{Theorem}\label{main_Theorem_3}
There is a countable family of smooth solutions to $(\ref{eq:NSE})$ under the presence of the external forces, denoted by $\{u^{n} (t),f^{\nu_n}(t) \big \}_{\nu_n > 0}$, on time interval $[0,T]$ for some $T > 0$, with a positive sequence of viscosity $\nu_n \to 0$ as $n \to \infty$, such that \\
\begin{enumerate}
\item $u^n \to u$ in $L^\infty([0,T];L^2(\T^3))$ and $f^{\nu_n} \to f$ in $L^1([0,T];L^r(\T^3))$ with any $r < 2$, for some $(u,f)$. \label{main_Theorem_3_1}\\
\item The pair $(u,f)$ is a weak solution to \eqref{eq:Euler} satisfying that $u \in L^3([0,T];B^{\frac{1}{3},c_n}_{3,\infty})$ for any sequence $\{c_n\}_{n \in \N} \in \ell^1_+$. In addition, $u^n$ and $u$ shares the same given initial condition, $u^n(0) = u(0) = u_\initial$. \label{main_Theorem_3_2} \\
\item \label{main_Theorem_3_3} It holds that 
\begin{equation}\label{main_Theorem_3_3_1}
\limsup_{q \to \infty}\int_0^T \tilde{\l}_q \threenorm{\Dq u(t)}^3 dt \les 1,
\end{equation}
where $\tilde{\l}_q := 2^q$ and the amount of anomalous work of $f$ is zero. \\
\item \label{main_Theorem_3_4} The family of NSE solutions $\{ u^n, f^{\nu_n} \}_{n \in \N,\nu_n > 0}$ exhibits total viscous anomalous dissipation, i.e.
\begin{equation}\label{main_Theorem_3_4_1}
\lim_{n \to \infty} 2 \nu_n \int_0^T \twonorm{\nabla u^n(t)}^2 dt = \twonorm{u_\initial}^2
\end{equation}
and performs zero work:
\begin{equation}\label{main_Theorem_3_4_2}
\int_0^T \<f^{\nu_n}, u^n \> dt = 0, \ \ \forall n \in \N.
\end{equation}
\end{enumerate}

\end{Theorem}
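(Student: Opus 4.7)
The plan is to adapt the perfect-mixing template underlying Theorem~\ref{main_Theorem_2} from the $2\tfrac{1}{2}$-dimensional ansatz to a genuinely three-dimensional setting, by replacing the mixer/passive-scalar pair with a superposition of intermittent Beltrami-type building blocks in the spirit of \cite{CL21}. Fix the dyadic hierarchy $\tilde\lambda_q=2^q$ and an exponentially shrinking family of time windows $I_q\subset[0,T]$ accumulating at $T$. At each scale we select a smooth, divergence-free Beltrami eigenmode $W_q$ of $-\Delta$ with eigenvalue $\tilde\lambda_q^2$, with nearly-disjoint Fourier supports and an intermittent profile so that $\|W_q\|_{L^2}\sim 1$ while $\|W_q\|_{L^3}\sim\tilde\lambda_q^{-1/3}$. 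The latter Onsager-critical scaling is the source of the sharp bound in \eqref{main_Theorem_3_3_1}.

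For each $\nu_n$, set $u^n(x,t)=\sum_{q=0}^{Q_n}A_q^n(t)\,W_q(x)$ with initial data $u^n(0)=u_{\mathit{in}}=\sum_q A_q\,W_q$ independent of $n$ (so $A_q^n(0)=A_q$), and with a cutoff $Q_n\to\infty$ chosen so that the viscous time scale $(\nu_n\tilde\lambda_{Q_n}^2)^{-1}$ matches $|I_{Q_n}|$. The coefficients $A_q^n(t)$ solve a coupled shell-model-type ODE system that drives an exponentially-fast energy cascade from scale $\tilde\lambda_q$ to $\tilde\lambda_{q+1}$ on the window $I_q$, while enforcing the pointwise balance
\begin{equation*}
\tfrac{d}{dt}\sum_{q}(A_q^n(t))^2\|W_q\|_{L^2}^2 = -2\nu_n\sum_{q}\tilde\lambda_q^2(A_q^n(t))^2\|W_q\|_{L^2}^2.
\end{equation*}
Defining $f^{\nu_n}:=\partial_t u^n+P(u^n\cdot\nabla u^n)-\nu_n\Delta u^n$ makes $u^n$ an exact NSE solution. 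Since $\langle P(u^n\cdot\nabla u^n),u^n\rangle_{L^2}=0$ by divergence-freeness and the displayed balance is exactly $\langle\partial_t u^n-\nu_n\Delta u^n,u^n\rangle_{L^2}=0$, we obtain $\langle f^{\nu_n}(t),u^n(t)\rangle_{L^2}=0$ for every $t$, proving \eqref{main_Theorem_3_4_2}. The cascade is tuned so that by time $T$ essentially all of $\|u_{\mathit{in}}\|_{L^2}^2$ has been transported past $\tilde\lambda_{Q_n}$ and dissipated, giving \eqref{main_Theorem_3_4_1}.

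The Onsager-critical scaling combined with $|A_q^n|\lesssim 1$ yields $\int_0^T\tilde\lambda_q\|\Delta_q u^n(t)\|_{L^3}^3\,dt\lesssim 1$ uniformly in $n$, which passes to the limit and gives \eqref{main_Theorem_3_3_1} together with $u\in L^3([0,T];\Bb^{1/3}_{3,\infty})$; by Proposition~\ref{prop_weighted_Onsager_class} this coincides with $L^3\big([0,T];\bigcap_{\{c_n\}\in\ell^1_+}\Bb^{1/3,c_n}_{3,\infty}\big)$, proving the regularity claim of item~\ref{main_Theorem_3_2}. Strong $L^\infty_tL^2_x$ convergence $u^n\to u$ follows from the $L^2$-orthogonality of $\{W_q\}$ together with uniform equicontinuity in $q$ and $n$ of the amplitudes $A_q^n(\cdot)$. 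The force $f^{\nu_n}$ decomposes into a nonlinear cross-interaction piece and a viscous-correction piece; applying Bernstein's inequality \eqref{Bern1} and the intermittent scaling yields uniform $L^1_tL^r_x$ bounds for every $r<2$ with $L^2$-blow-up as $n\to\infty$, which accounts for the ``slightly rough'' limiting $f$ and proves item~\ref{main_Theorem_3_1}. Zero anomalous work of $f$ then follows by passing \eqref{main_Theorem_3_4_2} to the limit.

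The hardest step is engineering the cascade ODE for $\{A_q^n(t)\}$ so that it simultaneously (i) satisfies the pointwise energy identity above, making \eqref{main_Theorem_3_4_2} exact rather than merely asymptotic; (ii) depletes $\|u_{\mathit{in}}\|_{L^2}^2$ entirely into viscous dissipation by time $T$ as $\nu_n\to 0$; (iii) realizes as an NSE evolution whose residual force sits in $L^1_tL^r_x$ for every $r<2$ but not in $L^2$; and (iv) produces a limit $u$ exactly on the Onsager threshold and no smoother. Balancing these four requirements across the exponentially shrinking windows $I_q$ with correct matching to the viscous cutoff scale $\tilde\lambda_{Q_n}$ is the technically heaviest bookkeeping, and is where the intermittent-Beltrami toolkit of \cite{CL21} is essential.
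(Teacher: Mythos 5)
There are genuine gaps, both in the mechanism and in the deductions. First, your building blocks cannot exist: on a fixed torus, H\"older's inequality gives $\|W_q\|_{L^3(\T^3)}\gtrsim\|W_q\|_{L^2(\T^3)}$, so a family with $\|W_q\|_{L^2}\sim 1$ and $\|W_q\|_{L^3}\sim\tilde{\lambda}_q^{-1/3}$ is impossible (intermittent concentration makes $L^p$ norms with $p>2$ \emph{larger}, not smaller, at fixed $L^2$ size). Consequently your proposed source of the sharp bound \eqref{main_Theorem_3_3_1} — pointwise-in-time criticality $\tilde{\lambda}_q\|\Delta_q u\|_{L^3}^3\sim 1$ — collapses. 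The paper obtains criticality differently: the CL21 blocks satisfy $\|W_n\|_{L^3}\sim\tilde{\lambda}_n^{\beta/6}\geq 1$, and the factor $\tilde{\lambda}_q^{1+\beta/2}$ is compensated by the \emph{shrinking time supports} $|\operatorname{supp}\chi_q|\sim\tilde{\lambda}_q^{-1-\beta/2}$, so the bound is space-time critical, not pointwise critical. Relatedly, your regularity deduction is invalid: a bound $\sup_q\int_0^T\tilde{\lambda}_q\|\Delta_q u\|_{L^3}^3\,dt\lesssim 1$ does not give $u\in L^3([0,T];\Bb^{1/3}_{3,\infty})$, because the $L^3_t\Bb^{1/3}_{3,\infty}$ norm puts the supremum over $q$ inside the time integral; indeed, for the paper's solution $\int_0^T\|u(t)\|_{\Bb^{1/3}_{3,\infty}}^3dt\sim\sum_n 1=\infty$, which is exactly why Theorem~\ref{main_Theorem_3} only asserts $u\in L^3_t\Bb^{1/3,c_n}_{3,\infty}$ for each fixed $\{c_n\}\in\ell^1_+$ (and note $L^3_t$ of an intersection of spaces is not the intersection of the $L^3_t$ classes, so Proposition~\ref{prop_weighted_Onsager_class} cannot be used the way you use it).

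Second, the heart of your argument — the shell-type ODE system for $A_q^n(t)$ that simultaneously gives pointwise zero work, total viscous depletion of the energy by time $T$, uniform $L^1_tL^r$ bounds on the residual force for all $r<2$, and the critical limit — is never constructed; you explicitly defer it, but that is precisely the content of the theorem, and demanding a genuine viscous cascade with \emph{pointwise} zero work is strictly harder than what is needed. The paper's route avoids this entirely: it takes $u=\sum_{n\ge N}\chi_n(t)W_n$ with $\sum_n\chi_n^2\equiv 1$ on $[0,T)$ and $\equiv 0$ at $T$, sets $u^m$ to be the truncation $\sum_{k=N}^{N+m}\chi_kW_k$, defines $f^{\nu_m}$ as the NSE residual, and chooses $\nu_m:=\big(2\int_0^T\|\nabla u^m\|_{L^2}^2dt\big)^{-1}$, so that \eqref{main_Theorem_3_4_1} holds by definition and \eqref{main_Theorem_3_4_2} follows exactly from the energy equality since $\|u^m(T)\|_{L^2}=0$; the energy removal is done by the time cutoffs (through the force), not by a dynamical cascade. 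Finally, ``zero anomalous work of $f$'' in item~\eqref{main_Theorem_3_3} is the statement $\Phi(T)=0$ about the high-frequency defect $\langle \Ss_q f,\Ss_q u\rangle-\langle f,u\rangle$ of the \emph{limit} pair near the singular time (Definition of $\Phi_q$, Proposition~\ref{prop:zero_AW}); it does not follow by passing \eqref{main_Theorem_3_4_2} to the limit, but requires the exact flux localization and cancellation properties of the $W_n$ (Lemma~\ref{lemma:build_block_second_construction}\ref{lemma:build_block_second_construction_3}), which your Beltrami-mode ansatz does not provide.
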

\vspace{1em}

\noindent The rest of the paper is dedicated to the proof of the above Theorems. The proof of Theorem $\ref{main_Theorem_1}$ follows very closely to the argument in \cite{CCFS08}. Theorem $\ref{main_Theorem_2}$ relies on the construction of $2\haf$-dimensional solutions and perfect mixing from \cite{BDL23} and \cite{cheskidov2023dissipation}.  Theorem \ref{main_Theorem_3} is based on the result of \cite{CL21}, which gives a full three dimensional solution under a rougher force.

\section{proof of Theorem \ref{main_Theorem_1}}\label{sec:proof_main_Theorem_1}

\noindent Firstly, without loss of generality we take the external force to be zero. From the result of \cite{CCFS08} and the assumption \eqref{main_Theorem_1_1}, it follows that the weak solutions $v$ to the $2D$ Euler equations conserves energy, i.e. 
\[
\twonormRtwo{v(T)}^2 = \twonormRtwo{v(0)}^2. 
\]
Hence it suffices to show that 
\[
\twonormRtwo{\rho(T)}^2 = \twonormRtwo{\rho(0)}^2 .
\]
Following the argument of \cite{CCFS08}, we denote 
\begin{align}
J &= \Ff^{-1} \varphi, \ \ \ \widetilde{J} = \Ff^{-1} \chi, \label{proof:energy_conservation_twohalf_1} \\
\Delta_q u &= \powermone{\Ff} (\varphi_q \Ff u) = \l^2_q \int_{\R^2} J(\l_q y) u(x-y) dy, \ \ q \geq 0, \label{proof:energy_conservation_twohalf_2} \\ 
\Delta_{-1} u &= \Ff^{-1} (\chi \Ff u) = \int_{\R^2} \wtj(y) u(x-y) dy, \label{proof:energy_conservation_twohalf_3} \\
\Pi_q &= \int_{\R^2} \Ss_q (v\rho) \cdot \nabla \Ss_q \rho(x,t) dx,  \label{proof:energy_conservation_twohalf_4} \\
\Ss_q u &= \sum_{p = -1}^q \Delta_p u,\label{proof:energy_conservation_twohalf_5}
\end{align}
where $\varphi, \varphi_q$ and $\chi$ are defined in section \ref{LP_Intro}. Recall from \eqref{eq:2.5D_flow_2} that since $\rho(t)$ is a weak solutions, we can test the transport equation with $\Ss_q \Ss_q \rho$ and obtain
\begin{equation}\label{proof:energy_conservation_twohalf_6}
\haf \frac{d}{dt} \twonormRtwo{\Ss_q \rho(t)}^2 =  \int_{\R^2} \Ss_q (v\rho) \cdot \nabla \Ss_q \rho dx = \Pi_q.
\end{equation}
 We observe that 
\begin{equation}\label{proof:energy_conservation_twohalf_7}
\Ss_q (v \rho) = r_q (v,\rho) - (v-\Ss_q v)(\rho - \Ss_q \rho) + \Ss_q v \Ss_q \rho 
\end{equation}
with
\[
r_q (v,\rho) = \l^2_q \int_{\R^2} \wtj(\l_q y) \big(v(x-y) -v(x) \big) \big(\rho(x-y) - \rho(x) \big) dy.
\]
Substituting \eqref{proof:energy_conservation_twohalf_7} into the right hand side of \eqref{proof:energy_conservation_twohalf_6} yields 
\begin{align*}
\Pi_q &= \int_{\R^2} r_q (v,\rho) \cdot \nabla \Ss_q \rho dx + \int_{\R^2} \Ss_q v \Ss_q \rho \cdot \nabla \Ss_q \rho dx - \int_{\R^2} (v-\Ss_q v)(\rho - \Ss_q \rho) \cdot \nabla \Ss_q \rho dx \\
&= \int_{\R^2} r_q (v,\rho) \cdot \nabla \Ss_q \rho dx - \int_{\R^2} (v-\Ss_q v)(\rho - \Ss_q \rho) \cdot \nabla \Ss_q \rho dx := A+B.
\end{align*}
The middle term of the above vanishes due to the incompressibility. Now we estimate the term $A$ by:
\[
|A| = \bigg| \int_{\R^2} r_q (v,\rho) \cdot \nabla \Ss_q \rho dx \bigg| \leq \|r_q(v,\rho) \|_{L^{\frac{3}{2}}} \|\nabla \Ss_q \rho \|_{L^3},
\]
since 
\begin{align*}
\|r_q(v,\rho) \|_{L^{\frac{3}{2}}} &\leq \int \l^2_q|\wtj(\l_q y)| \bigg( \int |v(x-y) - v(x)|^{\frac{3}{2}} |v(x-y)-v(x)|^{\frac{3}{2}} dx \bigg)^{\frac{2}{3}} dy \\\\
&\leq \int \l^2_q |\wtj(\l_q y)| \|v(\cdot - y) - v(\cdot) \|_{L^3} \|\rho(\cdot - y) - \rho(\cdot) \|_{L^3} dy \\\\
&\leq \bigg( \int \l^2_q |\wtj(\l_q y)| \|v(\cdot - y) - v(\cdot) \|^2_{L^3}  dy \bigg)^\haf \bigg( \int \l^2_q |\wtj(\l_q y)| \|\rho(\cdot - y) - \rho(\cdot) \|^2_{L^3}  dy \bigg)^\haf 
\end{align*}
and 
\begin{align*}
\|v(\cdot -y) - v(\cdot) \|^2_{L^3} &\les \sum_{p \leq q} |y|^2 \l_q^2 \|\Delta_p v\|^2_{L^3} + \sum_{p > q} \|\Delta_p v \|^2_{L^3}\\ 
&\les \l^{\frac{4}{3}}_q |y|^2 \sum_{p \leq q} \l^{-\frac{4}{3}}_{q-p} d^2_q + \l^{-\frac{2}{3}}_q \sum_{p > q} \l^{\frac{2}{3}}_{q-p} d^2_q \\
&\les \Big( \l^{\frac{4}{3}}_q |y|^2 + \l^{-\frac{2}{3}}_q \Big) \big( K * d^2 \big) (q),
\end{align*}
where 
\begin{align}
d_q &:= \l^{\frac{1}{3}}_q \|\Delta_q v \|_{L^3}, \ \ \ d := \{d_q\}_{q \geq 1}, \label{proof:energy_conservation_twohalf_8} \\
K &:= \begin{cases}
\l^{\frac{2}{3}}_q, \ \ q \leq 0, \\
\l_q^{-\frac{4}{3}}, \ \ q > 0.
\end{cases} \label{proof:energy_conservation_twohalf_9}
\end{align}
Similarly for 
\[
\|\rho(\cdot - y) - \rho(\cdot) \|^2_{L^3} \les \Big( \l^{\frac{4}{3}}_q |y|^2 + \l^{-\frac{2}{3}}_q \Big) \big( K * \tilde{d}^2 \big) (q),
\] 
where 
\begin{equation}\label{proof:energy_conservation_twohalf_10}
\td_q := \l^{\frac{1}{3}}_q \|\Dq \rho \|_{L^3}, \ \ \ \td:= \{ \td_q \}_{q \geq 1}.
\end{equation}
Therefore
\begin{align*}
\bigg( \int \l^2_q |\wtj(\l_q y)| \|v(\cdot - y) - v(\cdot) \|^2_{L^3}  dy \bigg)^\haf &\les \big( K * d^2 \big)^\haf (q) \Bigg( \bigg( \int \l_q^2 |\wtj(\l_q y)| \l^{\frac{4}{3}}_q |y|^2 dy \bigg)^\haf + \l_q^{-\frac{1}{3}}\Bigg) \\
&\les \l^{-\onethird}_q \big( K * d^2 \big)^\haf (q) \Bigg(1+ \bigg( \int (\l_q |y|)^2 \l^2_q |\wtj(\l_q y)| dy \bigg)^\haf \Bigg) \\
&\les \l^{-\onethird}_q (K * d^2)^\haf (q)
\end{align*}
and likewise,
\[
\bigg( \int \l^2_q |\wtj(\l_q y)| \|\rho(\cdot - y) - \rho(\cdot) \|^2_{L^3}  dy \bigg)^\haf \les \l^{-\onethird}_q (K * \td^2)^\haf (q).
\]
Then 
\begin{align*}
|A| &\les \|r_q(v,\rho) \|_{L^{\frac{3}{2}}} \|\nabla \Ss_q \rho \|_{L^3}\\
&\les \l^{-\frac{2}{3}}_q (K * d^2)^\haf (q) (K * \td^2)^\haf (q) \Big( \sum_{p \leq q} \l^2_p \|\Delta_p \rho \|^2_{L^3} \Big)^\haf \\
&\les \l_q^{-\frac{2}{3}} (K * d^2)^\haf (q) (K * \td^2)^\haf (q) \Big( \sum_{p \leq q} \l_p^{\frac{4}{3}} \td^2_p \Big)^\haf \\
&\les (K * d^2)^\haf (q) (K * \td^2)^\haf (q) \Big( \sum_{p \leq q} \l_{q-p}^{-\frac{4}{3}} \td^2_p \Big)^\haf \les (K * d^2)^\haf (q) (K * \td^2). 
\end{align*}
The term $B$ is bounded similarly by 
\begin{align*}
|B| &\leq \int \Big| (v-\Ss_q v) (\rho - \Ss_q \rho) \cdot \Ss_q \rho \Big| dx \leq \|v - \Ss_q v \|_{L^3} \| \rho - \Ss_q \rho \|_{L^3} \|\nabla \Ss_q \rho \|_{L^3} \\
&= \Big(\sum_{p >q} \|\Delta_p v \|_{L^3} \Big) \Big(\sum_{p >q} \|\Delta_p \rho \|_{L^3} \Big) \Big(\sum_{p \leq q} \l^2_p \|\Delta_p \rho \|^2_{L^3} \Big)^\haf \\
&\les \l^{-\onethird}_q  \Big(\sum_{p >q} \l^\onethird_{q-p} d_q \Big) \l^{-\onethird}_q \Big(\sum_{p >q} \l^\onethird_{q-p} \td_p  \Big) \Big(\sum_{p \leq q} \l^{\frac{4}{3}}_p \td^2_p \Big)^\haf \\
&\les  (K^\haf * d) (q) (K^\haf * \td) (q) \Big(\sum_{p \leq q} \l^{-\frac{4}{3}}_{q-p} \td^2_p \Big)^\haf \les  (K * d^2)^\haf (q) (K * \td^2) (q). 
\end{align*}
Finally by \eqref{proof:energy_conservation_twohalf_6}, we have that for fixed integer $q \geq -1$,
\begin{align*}
\haf \twonormRtwo{\Ss_q \rho(T)}^2 - \haf \twonormRtwo{\Ss_q \rho(0)}^2 &= \int_0^T \Pi_q dt \les \int_0^T (|A| + |B|) dt \\ 
&\les \int_0^T (K * d^2)^\haf (q) (K * \td^2) (q) dt \\
&\les \int_0^T d_q \td^2_q dt = \int_0^T \l_q \|\Dq v(t) \|_{L^3} \|\Dq \rho (t) \|^2_{L^3} dt.
\end{align*}
Taking $q \to \infty$ of the above, the result then follows from \eqref{main_Theorem_1_2}.
\qed

\section{2$\haf-$dimensional solutions and mixing construction}
\subsection{The construction through perfect mixing}\label{sec:The construction through perfect mixing}
For the proof of Theorem \ref{main_Theorem_1}, we will use $2\frac{1}{2}$-dimensional solutions of the Euler equations
\[
    u(x,t) = (v,\rho),
\]
where $v$ is a two-dimensional divergence-free vector field and $\rho$ is a scalar function satisfying
\[
    \partial_t \rho + v \cdot \nabla \rho = 0.
\]
We apply the smooth mixing construction by Alberti, Crippa, and Mazzucato~\cite{ACM16}, obtaining
\[
    \rho \in C^\infty([0,1)\times \T^2), \qquad v \in C^\infty([0,1)\times \T^2;\R^2),
\]
and such that
\[
    \rho(t) \rightharpoonup 0 \quad \text{weakly in } L^2 \ \text{as } t \to 1^-.
\]
In addition, $v \in L^\infty(0,1; C^\alpha)$ for all $\alpha \in (0,1)$. The next theorem follows from \cite[Section~8]{ACM16}. Recall from \eqref{def:lambdaq} that $\lambda_q = 5^q$.
 
\begin{Theorem} \label{drift_velocity_est}
For each $n \in \N$, there exists a smooth solution of the transport equation $\brho_n \in C^\infty([0,1]\times \T^2)$ with a smooth drift $\bv_n \in C^\infty([0,1]\times \T^2; \R^2)$ such that:
\vspace{0.25cm}
\begin{enumerate}
\item $\|\partial_t^m \bv_n\|_{L^\infty(0,1; C^k)} \le C(k,m)\lambda_n^{k-1}$ for every $k \ge 0$ and $m \in \N$. \label{drift_velocity_est_1}
\vspace{1em}
\item \label{drift_velocity_est_2} $\brho_n(t)$ has zero mean and $\|\brho_n(t)\|_{L^2} = 1$ for all $t \in [0,1]$, and
\[
\|\brho_n(t)\|_{L^\infty} \le 10, \qquad
\|\nabla \brho_n(t)\|_{L^\infty} \le C\lambda_n, \qquad
\|\brho_n(t)\|_{\dot{H}^{-1}(\T^2)} \le C\lambda_n^{-1},
\]
for some absolute constant $C > 0$.
\vspace{1em}
\item $\brho_n(1) = \brho_{n+1}(0)$ for every $n \in \N$.
\end{enumerate}
\end{Theorem}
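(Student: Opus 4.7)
The plan is to realize the sequence $\{(\bv_n,\brho_n)\}_{n\in\N}$ by rescaling a single universal base mixer on $[0,1]\times\T^2$, following the self-similar mixing scheme of Alberti, Crippa, and Mazzucato~\cite{ACM16}, adapted to the ratio $\lambda = 5$ underlying $\lambda_n = 5^n$.

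The first step is to construct a divergence-free field $w\in C^\infty([0,1]\times\T^2;\R^2)$ and a transported scalar $\sigma\in C^\infty([0,1]\times\T^2)$ satisfying $\partial_t\sigma + w\cdot\nabla\sigma = 0$, with $\sigma(0,\cdot)$ a prescribed mean-zero, unit-$L^2$ smooth profile, and with the key self-similarity identity $\sigma(1,x) = \sigma(0,5x)$ holding pointwise on $\T^2$. As in \cite{ACM16}, this base block is obtained by cascading finitely many smooth shear flows on dyadic strips of the torus, interpolated via smooth temporal cut-offs, so that every norm $\|\partial_t^m w\|_{L^\infty(0,1;C^k)}$, as well as $\|\sigma(\cdot,t)\|_{L^\infty}$, $\|\nabla\sigma(\cdot,t)\|_{L^\infty}$, and $\|\sigma(\cdot,t)\|_{\dot H^{-1}}$, is bounded uniformly on $[0,1]$ by an absolute constant depending only on $k$ and $m$.

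Next, I would define the rescaled pair
\[
\bv_n(x,t) := \lambda_n^{-1}\,w(\lambda_n x,t), \qquad \brho_n(x,t) := \sigma(\lambda_n x,t),
\]
which is well defined on $\T^2$ since $\lambda_n\in\N$. A direct substitution shows that $(\bv_n,\brho_n)$ solves the transport equation; each spatial derivative of $\bv_n$ contributes an extra factor $\lambda_n$, which combined with the prefactor $\lambda_n^{-1}$ yields item $(1)$. The identities $\|\brho_n\|_{L^p(\T^2)} = \|\sigma\|_{L^p(\T^2)}$ (valid because $\lambda_n$ is an integer) give the $L^\infty$ and $L^2$ bounds, the gradient bound picks up one extra factor $\lambda_n$, and the $\dot H^{-1}$ estimate follows from a direct Fourier computation using the fact that $\brho_n$ has Fourier support in $\lambda_n\Z^2$. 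Finally, the self-similarity identity immediately yields $\brho_n(1,x) = \sigma(0,5\lambda_n x) = \sigma(0,\lambda_{n+1}x) = \brho_{n+1}(0,x)$, establishing item $(3)$.

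The main obstacle is the construction of the base block: producing a smooth drift whose unit-time flow exactly realises the self-similar refinement $\sigma(1,\cdot) = \sigma(0,5\,\cdot)$ while keeping all space-time derivatives of $w$ and all the relevant norms of $\sigma$ uniformly bounded in $t$. This is the heart of the ACM construction, achieved by a careful composition of smoothed shear flows on nested dyadic partitions; moving from their ratio $2$ to the ratio $5$ used here only changes the number of shears and the constants involved. Once this base block is in hand, the remaining estimates for $(\bv_n,\brho_n)$ reduce to routine scaling verifications.
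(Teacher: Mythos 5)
Your proposal is correct and follows essentially the same route as the paper, which does not prove this theorem directly but cites the self-similar mixer of Alberti--Crippa--Mazzucato (Section 8 of \cite{ACM16}): the stated $(\bv_n,\brho_n)$ are exactly the spatial rescalings $\bv_n(x,t)=\lambda_n^{-1}w(\lambda_n x,t)$, $\brho_n(x,t)=\sigma(\lambda_n x,t)$ of that base block, and your scaling verifications (including the $\dot H^{-1}$ gain from Fourier support in $\lambda_n\Z^2$ and the gluing identity from $\sigma(1,\cdot)=\sigma(0,5\,\cdot)$) are the intended ones. The only part you defer, the construction of the ratio-$5$ base mixer with uniformly controlled norms, is precisely what the cited ACM result supplies.
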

\noindent We set the smooth initial data for the solutions of both the NSE and the Euler equations to have $\brho_1(0)$ as the third component:
\begin{equation}\label{initial_condition}
u_{\mathrm{in}} := (0,0,\rho_{\mathrm{in}}), \qquad
\rho_{\mathrm{in}} := \brho_1(0) \in C^\infty, \qquad
\|u_{\mathrm{in}}\|_{L^2} = \|\rho_{\mathrm{in}}\|_{L^2} = 1.
\end{equation}
Fix a sequence $\{a_n\}_{n \in \N} \in \Tt$. For each $m \in \N$, we follow the scheme in \cite{BDL23} and glue in time the rescaled velocities $\Bar v_n$ and densities $\Bar\rho_n$, from which we obtain smooth solutions of the transport equation on $[0,1]$:
\begin{align}
v^m(x,t) &=
\sum_{n=0}^m \eta_m'(t)\,
\chi_{[t_n^m,t_{n+1}^m)}(\eta_m(t))
\frac{1}{t_{n+1}^m - t_n^m}
\Bar v_n\!\left(x, \frac{\eta_m(t)-t_n^m}{t_{n+1}^m - t_n^m}\right), \label{glued_velocity}
\\[0.5em]
\rho^m(x,t) &=
\sum_{n=0}^m \chi_{[t_n^m,t_{n+1}^m)}(\eta_m(t))
\Bar\rho_n\!\left(x, \frac{\eta_m(t)-t_n^m}{t_{n+1}^m - t_n^m}\right)
+ \chi_{[t_{m+1}^m,1]}(t)\,\brho_m(x,1), \label{glued_density}
\end{align}
where
\begin{align}\label{def_time_scale}
t_n^m := 1 - (\lambda_n a_n)^{-1} - \tau_m, \qquad 1 \le n \le m+1,
\end{align}
and
\begin{align}
\tau_m &:= m(\nu_m \Lambda_m^2)^{-1}, \label{def_tau_m}\\
\nu_m &:= a_m^2\lambda_m^{-1}, \label{def_nu_m}\\
\Lambda_m &:= a_m^{\haf}\lambda_m. \label{def_Lambda_m}
\end{align}
Observe from the above that $\tau_m = m a_m^{-3}\lambda_m^{-1} \to 0$ as $m \to \infty$, and therefore
\[
t_n^m \to t_n \quad \text{as } m \to \infty,
\]
where $t_n = 1 - (\lambda_n a_n)^{-1}$. The role of $\tau_m$ is to ensure that the dissipation occurs at $t = 1$ by shifting each $t_n^m$ slightly to the left.
\vspace{1em}\\
For each $m > 0$, we set $\eta_m : [0,1] \to [0,1]$ to be a smooth non-decreasing function such that
\begin{enumerate}[label = ({E}\arabic*)]
    \item $\eta_m(t_n^m) = t_n^m$ for every $n \in \N$, \label{def_eta_m1}
    \item $\eta_m^{(k)}(t_n^m) = 0$ for every $n \in \N$ and $k \ge 1$, \label{def_eta_m2}
    \item $|\eta_m^{(k)}(t)|\,\chi_{[t_n^m,t_{n+1}^m)}(\eta_m(t)) \lesssim_k (\lambda_n a_n)^{k-1}$ for every $n \in \N$, $k \ge 2$, and $t \in [0,1]$. \label{def_eta_m3}
\end{enumerate}
From the definition of $\rho^m$, it is straightforward to check that it satisfies the same estimates as in Theorem~\ref{drift_velocity_est},\eqref{drift_velocity_est_2}:
\begin{equation}\label{glued_density_est}
\|\rho^m(t)\|_{L^\infty} \le 10, \qquad
\|\nabla \rho^m(t)\|_{L^\infty} \le C_1 \lambda_m, \qquad
\|\rho^m(t)\|_{\dot{H}^{-1}(\T^2)} \le C_1 \lambda_m^{-1},
\end{equation}
for some absolute constant $C_1 > 0$. Moreover, as $m \to \infty$, there exists a function $\eta(t)$ such that $\eta_m(t) \to \eta(t)$ uniformly, and $\eta(t)$ satisfies \ref{def_eta_m1}--\ref{def_eta_m3} with $t_n^m$ replaced by $t_n$.
\vspace{1em}\\
Next we define
\begin{align}
v(x,t) &= \sum_{n=0}^\infty \eta'(t)\,
\chi_{[t_n,t_{n+1})}(\eta(t))
\frac{1}{t_{n+1}-t_n}\,
\bv_n\Big(x, \frac{\eta(t)-t_n}{t_{n+1}-t_n}\Big), \label{glued_velocity_lim}
\\[0.7em]
\rho(x,t) &= \sum_{n=0}^\infty
\chi_{[t_n,t_{n+1})}(\eta(t))
\brho_n\Big(x, \frac{\eta(t)-t_n}{t_{n+1}-t_n}\Big). \label{glued_density_lim}
\end{align}
\subsection{$2\haf$-dimensional solutions to Euler and Navier--Stokes equations}
One can show that $\rho$ given by \eqref{glued_density_lim} is a smooth solution of the transport equation on $[0,1)$:
\begin{equation}\label{tran_equ_rho}
\partial_t \rho + v \cdot \nabla \rho = 0,
\qquad \rho(0) = \rho_{\mathrm{in}},
\end{equation}
where the drift $v$ is given by \eqref{glued_velocity_lim}. We now introduce the $2\frac{1}{2}$-dimensional velocity field
\begin{equation}\label{def_u}
u(x,t) = \big(v(x_1,x_2,t),\, \rho(x_1,x_2,t)\big).
\end{equation}
One can observe that $u(t)$ is a smooth solution of the Euler equations on $[0,1)$ with
$u(0) = u_{\mathrm{in}}$ defined in \eqref{initial_condition}, and with the force
$f = (g,0)$, where
\begin{equation}\label{def_force_g}
    g := \partial_t v + (v \cdot \nabla)v.
\end{equation}
We consider a sequence of solutions to the NSE that also preserve the $2\frac{1}{2}$-dimensional structure:
\begin{equation}\label{def_NSE_solution_u^m}
    u^m(x,t) = \big(v^m(x_1,x_2,t),\, \theta^m(x_1,x_2,t)\big),
\end{equation}
where $v^m$ is given by \eqref{glued_velocity} and $\theta^m$ satisfies the advection–diffusion equation
\begin{equation}\label{adv_diff_equ}
    \partial_t \theta^m + v^m \cdot \nabla \theta^m = \nu_m \Delta \theta^m,
\end{equation}
with $\theta^m(0) = \rho_{\mathrm{in}}$. One can check that $u^m(t)$ satisfies the NSE with force
\begin{equation}\label{gm_def}
    g^m := \partial_t v^m + (v^m \cdot \nabla)v^m - \nu_m \Delta v^m,
\end{equation}
where $\nu_m$ is given by \eqref{def_nu_m}. The following lemma, taken from \cite{BDL23}, shows that $g^m$ converges strongly to the Euler forcing term.
\begin{Lemma}\label{bounded_force_g}
Let $\nu_m$ and $g^m$ be as above. Then,
\[
    \|g^m\|_{L^1(0,1;L^2(\T^3))} < \infty, \ \  \forall m \in \N.
\]
Moreover, $g^m \to g$ in $L^1([0,1];L^2(\T^3))$ as $m \to \infty$, where
\[
    g = \partial_t v + v \cdot \nabla v
\]
is the body force generated by $v$ in the Euler equations.
\end{Lemma}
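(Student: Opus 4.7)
The plan is to bound $g^m = \partial_t v^m + (v^m \cdot \nabla)v^m - \nu_m \Delta v^m$ by estimating each summand on every self-similar building block $[t_n^m, t_{n+1}^m]$, and then to pass to the limit $m \to \infty$ via a tails-plus-uniform-convergence argument. On such a block, which has length $\sim (\lambda_n a_n)^{-1}$, the velocity $v^m$ is by \eqref{glued_velocity} the drift $\bv_n$ time-rescaled by the factor $1/(t_{n+1}^m - t_n^m) \sim \lambda_n a_n$ and composed with $\eta_m$. Combining the spatial bounds $\|\bv_n\|_{L^\infty(0,1;C^k)} \lesssim \lambda_n^{k-1}$ from Theorem~\ref{drift_velocity_est}\eqref{drift_velocity_est_1} with the derivative estimates $|\eta_m^{(k)}(t)| \lesssim_k (\lambda_n a_n)^{k-1}$ from \ref{def_eta_m3}, a direct chain-rule computation yields, on $[t_n^m, t_{n+1}^m]$, the pointwise bounds
\[
\|v^m\|_{L^\infty} \lesssim a_n, \qquad \|\partial_t v^m\|_{L^\infty} + \|(v^m \cdot \nabla)v^m\|_{L^\infty} \lesssim \lambda_n a_n^2, \qquad \nu_m\|\Delta v^m\|_{L^\infty} \lesssim \frac{a_m^2 a_n \lambda_n^2}{\lambda_m}.
\]

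For the first assertion, I integrate these bounds over the block (picking up the length factor $(\lambda_n a_n)^{-1}$) and sum over $n = 0, \ldots, m$. The two ``inviscid'' summands each contribute $a_n$ per block, yielding a total of $\sum_{n} a_n < \infty$ by the $\ell^1$-summability of $\{a_n\}$ (which holds since $\{a_n\} \in \Tt$, cf.\ \eqref{def_weighted_Besov_2}). The viscous summand contributes $a_m^2\lambda_n/\lambda_m$ per block, whose sum over $n \le m$ is dominated by the $n = m$ term and is $O(a_m^2)$ because $\lambda_n = 5^n$ is geometric. This not only proves $\|g^m\|_{L^1_t L^2} < \infty$ for every $m$, but also gives a uniform bound $\|g^m\|_{L^1_t L^2} \lesssim 1$.

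For the convergence $g^m \to g$, I decompose $g^m - g = (\partial_t v^m - \partial_t v) + \bigl[(v^m \cdot \nabla)v^m - (v \cdot \nabla)v\bigr] - \nu_m \Delta v^m$. The last summand vanishes in $L^1_t L^2$ since its norm is $O(a_m^2) \to 0$. For the remaining two, I fix $\varepsilon > 0$, choose $N$ with $\sum_{n > N} a_n < \varepsilon$, and split the time interval at $t_{N+1}$. The same block-by-block estimates applied both to $v^m$ and to the limit $v$ (via its analogous representation \eqref{glued_velocity_lim}) bound the tail contribution from $[t_{N+1}, 1]$ by $C\varepsilon$, uniformly in $m$. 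On the complementary interval $[0, t_{N+1}]$ only finitely many blocks $n \le N$ appear; using $t_n^m \to t_n$ (from $\tau_m \to 0$), the uniform convergence $\eta_m \to \eta$ stated after \eqref{glued_density_est}, and the fixed smoothness of each $\bv_n$, one deduces strong convergence $v^m \to v$ in $C^1([0, t_{N+1}] \times \T^2)$, which yields convergence of the two inviscid differences in $L^1_t L^2$ on this subinterval.

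The main obstacle is the accumulation of countably many shrinking self-similar blocks toward the terminal time $t = 1$: each block contributes an $a_n$-sized amount to the $L^1_t L^2$-norm, and only the $\ell^1$-summability of $\{a_n\}$ secures both the uniform-in-$m$ bound and the smallness of the tail. A secondary technical point is promoting the uniform convergence $\eta_m \to \eta$ to $C^1$-convergence on each fixed block $[t_n, t_{n+1}]$ with $n \le N$; this can be arranged by choosing $\eta_m$ so that its restriction to each such block stabilizes for $m$ sufficiently large, modulo the negligible shift $\tau_m$.
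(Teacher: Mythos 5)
Your proposal is correct and follows essentially the same route as the paper: block-by-block estimates using the scaling $\|\bv_n\|_{C^k}\lesssim \lambda_n^{k-1}$, the block length $(\lambda_n a_n)^{-1}$, the $\eta_m$-derivative bounds, and $\ell^1$-summability of $\{a_n\}$, giving per-block contributions $a_n$ for the inviscid terms and $O(a_m^2)$ total for the viscous term. Your tail-plus-finite-blocks argument for $g^m\to g$ just makes explicit a step the paper leaves implicit after its uniform bounds.
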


\begin{proof}
We start by estimating $\| \partial_t v^m \|_{L^1(0,1;L^2)}$:
\begin{align*}
    \| \partial_t v^m \|_{L^2}
    &\lesssim \sum_{n=0}^m |\eta_m''(t)|
    \chi_{[t_n^m,t_{n+1}^m)}(\eta_m(t))
    \frac{1}{t_{n+1}^m - t_n^m}
    \left\| \bv_n\!\left(\cdot,
    \frac{\eta_m(t) - t_n^m}{t_{n+1}^m - t_n^m}\right)\right\|_{L^2}
    \\
    &\quad + \sum_{n=0}^m |\eta_m'(t)|^2
    \chi_{[t_n^m,t_{n+1}^m)}(\eta_m(t))
    \frac{1}{(t_{n+1}^m - t_n^m)^2}
    \left\| \partial_t \bv_n\!\left(\cdot,
    \frac{\eta_m(t) - t_n^m}{t_{n+1}^m - t_n^m}\right)\right\|_{L^2}.
\end{align*}
From the properties of $\eta_m(t)$, Theorem~\ref{drift_velocity_est}, and the summability of $\{a_n\}$, we obtain
\begin{align*}
    \|\partial_t v^m\|_{L^1(0,1;L^2)}
    &\lesssim \sum_{n=0}^m \lambda_n a_n (\lambda_n a_n)^{-1}
    \lambda_n a_n \lambda_n^{-1} + \sum_{n=0}^m (\lambda_n a_n)^{-1} (\lambda_n a_n)^2
    \lambda_n^{-1}
    \\
    &\lesssim \sum_{n=0}^\infty a_n
    < \infty.
\end{align*}
Thus,
\begin{equation}\label{gm_convergence_dt}
    \lim_{m \to \infty}
    \| \partial_t v^m - \partial_t v \|_{L^1(0,1;L^2)}
    = 0.
\end{equation}
Next, we estimate the nonlinear term $v^m \cdot \nabla v^m$:
\begin{align*}
    \|v^m \cdot \nabla v^m\|_{L^2}
    &= \sum_{n=0}^m |\eta_m'(t)|^2
    \chi_{[t_n^m,t_{n+1}^m)}(\eta_m(t))
    \frac{1}{(t_{n+1}^m - t_n^m)^2}
    \|v_n \cdot \nabla v_n\|_{L^2}
    \\
    \Rightarrow \
    \|v^m \cdot \nabla v^m\|_{L^1(0,1;L^2)}
    &\lesssim \sum_{n=0}^m
    (\lambda_n a_n)^{-1}
    (\lambda_n a_n)^2 \lambda_n^{-1}
    \lesssim \sum_{n=0}^m a_n
    \lesssim 1.
\end{align*}
Therefore,
\begin{equation}\label{gm_convergence_nonlinear}
    \lim_{m \to \infty}
    \| v^m \cdot \nabla v^m - v \cdot \nabla v \|_{L^1(0,1;L^2)}
    = 0.
\end{equation}
Finally, the dissipative term is estimated by
\begin{align*}
    \nu_m \| \Delta v^m(\cdot,t)\|_{L^1(0,1;L^2)}
    &\lesssim \nu_m \sum_{n=0}^m |\eta_m'(t)|
    \chi_{[t_n^m,t_{n+1}^m)}(\eta_m(t))
    \frac{1}{t_{n+1}^m - t_n^m}
    \left\|\Delta \bv_n\!\left(\cdot,
    \frac{\eta_m(t)-t_n^m}{t_{n+1}^m - t_n^m}\right)\right\|_{L^2}
    \\
    &\lesssim \nu_m \sum_{n=0}^m
    (\lambda_n a_n)^{-1} (\lambda_n a_n)\lambda_n
    \lesssim a_m^2,
\end{align*}
and hence this dissipative term tends to $0$ as $m \to \infty$. Together with
\eqref{gm_convergence_dt} and \eqref{gm_convergence_nonlinear}, this concludes the proof.
\end{proof}
\noindent The next lemma provides an estimate for the vanishing viscosity limit of passive scalar equations. The proof can be found in \cite[Proposition~1.3]{DEIJ19}.

\begin{Lemma}\label{lemma:vanish_viscosity_esti}
Let $\nu > 0$, $v \in L^1([0,1]; W^{1,\infty}(\T^2;\R^2))$, and $\theta_0 \in L^2(\T^2)$. Let $\theta^\nu(t)$ be a solution of the advection–diffusion equation
\begin{align*}
    \partial_t \theta^\nu + v \cdot \nabla \theta^\nu &= \nu \Delta \theta^\nu,\\
    \theta^\nu(0) &= \theta_0,
\end{align*}
and $\rho(t)$ be a solution of the transport equation
\begin{align*}
    \partial_t \rho + v \cdot \nabla \rho &= 0,\\
    \rho(0) &= \theta_0.
\end{align*}
Then
\[
    \sup_{t \in [0,1]}
    \|\theta^\nu(\cdot,t) - \rho(\cdot,t)\|_{L^2}^2
    \le \Big(2\nu \int_0^1
    \|\nabla \theta^\nu(\cdot,s)\|_{L^2}^2 ds \Big)^{1/2}
    \Big(2\nu \int_0^1
    \|\nabla \rho(\cdot,s)\|_{L^2}^2 ds \Big)^{1/2}.
\]
\end{Lemma}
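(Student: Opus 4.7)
The plan is to write the evolution equation for the difference $w := \theta^\nu - \rho$ and exploit the fact that $w(0)=0$ together with an energy identity whose cross term is controlled by Cauchy--Schwarz. Subtracting the two equations gives
\[
\partial_t w + v\cdot\nabla w = \nu \Delta \theta^\nu,
\qquad w(0)=0,
\]
so the forcing on $w$ is only $\nu\Delta\theta^\nu$, which will eventually be paired against $\nabla\rho$.

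Next I would test this equation against $w$ in $L^2(\T^2)$. Since $v$ is divergence-free (with $v\in L^1_t W^{1,\infty}_x$ this step is fully rigorous), the transport term vanishes because $\int v\cdot\nabla(\tfrac12 w^2)\,dx=0$. Writing $\nabla w = \nabla\theta^\nu - \nabla\rho$ and integrating by parts on the diffusion term produces the key identity
\[
\tfrac12 \tfrac{d}{dt}\|w(t)\|_{L^2}^2
= -\nu \|\nabla\theta^\nu(t)\|_{L^2}^2
+ \nu \int_{\T^2}\nabla\theta^\nu(t)\cdot\nabla\rho(t)\,dx .
\]
Integrating from $0$ to $t$ (using $w(0)=0$) and applying Cauchy--Schwarz first in space and then in time yields
\[
\|w(t)\|_{L^2}^2
\le -2\nu \int_0^t \|\nabla\theta^\nu\|_{L^2}^2\,ds
+ 2\nu\Bigl(\int_0^t \|\nabla\theta^\nu\|_{L^2}^2\,ds\Bigr)^{1/2}\Bigl(\int_0^t \|\nabla\rho\|_{L^2}^2\,ds\Bigr)^{1/2}.
\]

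Finally, the first (non-positive) dissipation term on the right is simply discarded, and the remaining time integrals are bounded by their values on the full interval $[0,1]$; taking $\sup_{t\in[0,1]}$ delivers exactly the stated estimate. Essentially no real obstacle arises: the only delicate point is justifying the integration-by-parts manipulations on the time interval where $\rho$ and $\theta^\nu$ have merely the regularity inherited from Lipschitz drifts and $L^2$ data. This is standard once one notes that for smooth $v$ the transport and parabolic PDEs both admit solutions regular enough (e.g.\ by approximation of $\theta_0$ by smooth data and passage to the limit in the identity, using lower semicontinuity of the $L^2$ norm on the left-hand side and weak convergence of the gradient terms on the right) to legitimize every step.
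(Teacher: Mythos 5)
Your argument is correct, and it is essentially the standard proof of this estimate: the paper itself gives no proof but cites \cite[Proposition~1.3]{DEIJ19}, and your computation (testing the equation for $w=\theta^\nu-\rho$ with $w$, killing the transport term by incompressibility, splitting $\nabla w=\nabla\theta^\nu-\nabla\rho$ in the diffusive term, then Cauchy--Schwarz in space and time and discarding the non-positive dissipation term) reproduces exactly the identity and estimate behind that cited result. One small caveat: the lemma as stated only assumes $v\in L^1([0,1];W^{1,\infty})$ and does not literally say $\nabla\cdot v=0$, which your proof uses to drop the transport term; this hypothesis is implicit (it holds for every drift to which the lemma is applied in the paper and in the cited reference), so it is an omission in the statement rather than a defect of your argument, but it is worth recording that incompressibility is genuinely needed for the clean product bound. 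Your closing remark about justifying the manipulations by smooth approximation and lower semicontinuity is the right way to handle the low regularity of $\rho$; note also that if $\int_0^1\|\nabla\rho\|_{L^2}^2\,ds=\infty$ the inequality is vacuous, so only the case of finite right-hand side needs the approximation argument.
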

\noindent Finally, we state two simple lemmas: the first ensures that the total work done by the limiting force is zero, and the second guarantees that the NSE solutions in our construction converge to a solution of the Euler equations. The original proofs can be found in \cite{cheskidov2023dissipation}.
\begin{Lemma}\label{lemma:zero_work_by_force}
Let $u^\nu = (v^\nu, \theta^\nu)$ be a family of solutions to the 3D NSE on $[0,1]$ with initial data
$u^\nu(0) = u_{\text{in}}$, where
$v^\nu \in C^\infty([0,1]\times \T^3; \R^3)$,
$\theta^\nu \in C^\infty([0,1]\times \T^3)$,
and $f^\nu \to f$ in $L^1(0,1;L^2(\T^3))$.
Assume also that $\{v^\nu\}_\nu$ is bounded in $L^2(0,1;H^1)$ and that
$v^\nu(t) = 0$ for all $\nu$ and all $t \in I \subset [0,1]$.
Then
\begin{equation}\label{energy_loss_theta}
\lim_{\nu \to 0}
\left(
\nu \int_0^t \|\nabla u^\nu\|_{L^2}^2\,d\tau
-
\nu \int_0^t \|\nabla \theta^\nu\|_{L^2}^2\,d\tau
\right)
= 0,
\qquad
\forall t \in [0,1],
\end{equation}
and
\begin{equation}\label{zero_work_by_force}
\int_0^t \langle f, u \rangle\,d\tau = 0,
\qquad
\forall t \in I.
\end{equation}
\end{Lemma}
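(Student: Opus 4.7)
The plan is to handle the two conclusions separately by exploiting the $2\haf$-dimensional product structure $u^\nu = (v^\nu, \theta^\nu)$.

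For \eqref{energy_loss_theta}, the decisive observation is the pointwise orthogonal decomposition
\[
|\nabla u^\nu|^2 = |\nabla v^\nu|^2 + |\nabla \theta^\nu|^2,
\]
which reduces the left-hand side of \eqref{energy_loss_theta} directly to $\nu\int_0^t \|\nabla v^\nu\|_{L^2}^2\,d\tau$. The uniform $L^2([0,1];H^1)$-bound on $\{v^\nu\}$ makes this quantity $O(\nu)$, which vanishes as $\nu\to 0$. This step is essentially immediate from the hypotheses and requires no further input.

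For \eqref{zero_work_by_force}, I would first exploit that in the $2\haf$-dimensional ansatz the scalar $\theta^\nu$ satisfies the unforced advection--diffusion equation \eqref{adv_diff_equ}, forcing $f^\nu$ to have the product structure $f^\nu = (g^\nu, 0)$. Strong $L^1([0,1];L^2)$ convergence preserves the component decomposition, so the limit inherits the form $f = (g,0)$ for some $g \in L^1_t L^2_x$. Consequently $\langle f, u\rangle = \langle g, v\rangle$ pointwise in time. The assumption that $v^\nu \equiv 0$ on $I$, combined with the convergence $v^\nu \to v$, yields $v \equiv 0$ on $I$, and hence $\langle f(\tau), u(\tau)\rangle = 0$ for a.e.\ $\tau \in I$. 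Integrating from $0$ to any $t \in I$ then gives \eqref{zero_work_by_force}, provided $[0,t] \subset I$, which is the intended situation since $I$ corresponds to the initial dormant interval of $v^\nu$ arising from the mixing construction of Section~\ref{sec:The construction through perfect mixing}.

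The main point requiring care is the passage to the limit for the force: verifying that $f$ inherits the $(g,0)$ structure, and that the convergence of $v^\nu$ is strong enough to transfer the vanishing on $I$ to the limit $v$. Both properties follow from the strong $L^1_t L^2_x$ convergence of $f^\nu$ together with the fact that the third component of $f^\nu$ is identically zero for every $\nu$, so that the corresponding component of the limit is identically zero as well. With these two pieces in hand, the work integral on $I$ reduces to the pointwise identity $\langle f,u\rangle = 0$, and \eqref{zero_work_by_force} follows by integration.
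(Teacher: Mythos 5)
Your handling of \eqref{energy_loss_theta} is correct and coincides with the paper's: the pointwise splitting $|\nabla u^\nu|^2=|\nabla v^\nu|^2+|\nabla\theta^\nu|^2$ reduces the difference to $\nu\int_0^t\|\nabla v^\nu\|_{L^2}^2\,d\tau$, which is $O(\nu)$ by the uniform $L^2_tH^1_x$ bound.

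For \eqref{zero_work_by_force}, however, your argument has a genuine gap. What you prove is the pointwise identity $\langle f(\tau),u(\tau)\rangle=\langle g(\tau),v(\tau)\rangle=0$ for $\tau\in I$, and you then need the extra hypothesis $[0,t]\subset I$ to integrate; you justify this by asserting that $I$ is an ``initial dormant interval,'' but that is not the situation the lemma is designed for. In the construction of Sections 5--6 the velocity $v^m$ is active on essentially all of $[0,t^m_{m+1}]$ and vanishes only on the terminal interval $[t^m_{m+1},1]$, so the common vanishing set $I$ consists of times at (or accumulating at) $t=1$; for such $t\in I$ the integrand $\langle g(\tau),v(\tau)\rangle$ is genuinely nonzero on most of $[0,t]$, and the vanishing of the total work is a cancellation encoded in the energy balance, not a pointwise vanishing. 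Your route therefore cannot yield the conclusion for the $t$'s that matter, nor for general $t\in I$ as the lemma states. The paper's proof uses $v^\nu(t)=0$ only at the single time $t\in I$: this gives $\|u^\nu(t)\|_{L^2}=\|\theta^\nu(t)\|_{L^2}$, and subtracting the scalar energy balance coming from \eqref{adv_diff_equ} from the NSE energy equality \eqref{energy_eq_NSE} yields
\[
\int_0^t\langle f^\nu,u^\nu\rangle\,d\tau
=\nu\int_0^t\|\nabla u^\nu\|_{L^2}^2\,d\tau-\nu\int_0^t\|\nabla\theta^\nu\|_{L^2}^2\,d\tau,
\]
which tends to $0$ by \eqref{energy_loss_theta}; passing to the limit on the left using $f^\nu\to f$ in $L^1_tL^2_x$ and the weak-in-$L^2$ convergence of $u^\nu$ gives \eqref{zero_work_by_force}. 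You should replace your pointwise argument on $I$ by this comparison of the two energy balances (note it also uses, as your proof does, the structural facts that the third component of the force vanishes and that $\theta^\nu$ solves the unforced advection--diffusion equation).
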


\begin{proof}
Since $v^\nu \in L^2([0,1];H^1)$, we have
\[
\lim_{\nu \to 0}
\nu \int_0^t \|\nabla v^\nu\|_{L^2}^2\,d\tau
= 0,
\qquad \forall t \in [0,1],
\]
which immediately implies \eqref{energy_loss_theta}.  
From the assumption
\[
v^\nu(t) = 0,
\qquad \forall \nu > 0,~t\in I,
\]
we have
\[
\|u^\nu(t)\|_{L^2} = \|\theta^\nu(t)\|_{L^2},
\qquad \forall\nu,~t\in I.
\]
Using this observation together with the energy equality \eqref{energy_eq_NSE} and
\[
\|\theta^\nu(t)\|_{L^2}^2
= \|u_{\text{in}}\|_{L^2}^2
-2\nu \int_0^t \|\nabla \theta^\nu(\tau)\|_{L^2}^2\,d\tau,
\]
derived from \eqref{adv_diff_equ}, we obtain
\begin{align*}
\int_0^t \langle f, u\rangle\,d\tau
&= \lim_{\nu\to 0} \int_0^t \langle f^\nu, u^\nu\rangle\,d\tau \\
&= \lim_{\nu\to 0}
\bigg(
\nu \int_0^t \|\nabla u^\nu\|_{L^2}^2\,d\tau
-
\nu \int_0^t \|\nabla \theta^\nu\|_{L^2}^2\,d\tau
\bigg)
= 0,
\end{align*}
for any $t \in I$.
\end{proof}

\begin{Lemma}\label{lemma:theta_convergence}
Let $\{\theta^\nu\}_{\nu > 0}$ be a family of solutions to the advection–diffusion equation
\begin{align*}
    \partial_t \theta^\nu + v^\nu \cdot \nabla \theta^\nu &= \nu \Delta \theta^\nu,\\
    \theta^\nu(0) &= \theta_0,
\end{align*}
bounded in $L^\infty([0,T];L^2(\T^3))$,
with $\theta_0 \in L^2(\T^3)$ and
$v^\nu \to v$ in $L^1([0,T];L^2(\T^3))$ as $\nu\to 0$.
Then there exists a subsequence $\nu_j \to 0$ such that
\[
\theta^{\nu_j} \to \theta \quad \text{in} \quad C_w([0,T];L^2),
\]
where $\theta$ is a weak solution of the transport equation with drift $v$:
\begin{align*}
    \partial_t \theta + v \cdot \nabla \theta &= 0,\\
    \theta(0) &= \theta_0.
\end{align*}
\end{Lemma}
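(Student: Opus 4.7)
The plan is a standard weak-compactness argument with an Arzel\`a--Ascoli equicontinuity upgrade, followed by passage to the limit in the weak formulation of the equation. Throughout, I use that in the paper's setting the drift $v^\nu$ is divergence-free, so that integration by parts puts the derivative onto the test function.

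First, since $\{\theta^\nu\}$ is uniformly bounded in $L^\infty([0,T];L^2)$, Banach--Alaoglu gives a weak-$*$ limit $\theta \in L^\infty([0,T];L^2)$ along some subsequence. To upgrade this to $C_w$-convergence, I would fix $\phi \in C^\infty(\T^3)$, test the equation, and integrate by parts to obtain
\begin{equation*}
\frac{d}{dt}\langle \theta^\nu(t), \phi\rangle \;=\; \langle \theta^\nu(t),\, v^\nu(t)\cdot \nabla \phi\rangle \;+\; \nu\,\langle \theta^\nu(t), \Delta \phi\rangle,
\end{equation*}
which is pointwise bounded by $C_\phi\bigl(\|v^\nu(t)\|_{L^2} + \nu\bigr)$ via the uniform $L^2$-bound on $\theta^\nu$. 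Since $v^\nu \to v$ in $L^1([0,T];L^2)$, the family $\{\|v^\nu(\cdot)\|_{L^2}\}$ is equi-integrable, so $t\mapsto \langle \theta^\nu(t),\phi\rangle$ is uniformly equicontinuous on $[0,T]$. A diagonal Arzel\`a--Ascoli extraction over a countable dense set of smooth $\phi$'s, together with the uniform $L^2$-bound on $\theta^\nu$, yields a subsequence $\theta^{\nu_j} \to \theta$ in $C_w([0,T];L^2)$.

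To identify $\theta$ as a weak solution of the transport equation, I would take $\psi \in C^\infty_c([0,T)\times \T^3)$ and start from the weak formulation
\begin{equation*}
-\int_0^T \langle \theta^{\nu_j}, \partial_t \psi\rangle\,dt \;-\; \int_0^T \langle v^{\nu_j}\theta^{\nu_j}, \nabla \psi\rangle\,dt \;-\; \nu_j \int_0^T \langle \theta^{\nu_j}, \Delta \psi\rangle\,dt \;=\; \langle \theta_0, \psi(0)\rangle.
\end{equation*}
The viscous term is $O(\nu_j)$. The time-derivative term passes to the limit directly from $C_w$-convergence paired with $\partial_t \psi \in L^1_t L^2_x$. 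For the drift term I split
\begin{equation*}
\int_0^T \langle (v^{\nu_j}-v)\,\theta^{\nu_j},\, \nabla \psi\rangle\,dt \;+\; \int_0^T \langle v\,\theta^{\nu_j}, \nabla \psi\rangle\,dt,
\end{equation*}
bounding the first by $\|\nabla \psi\|_\infty\, \|\theta^{\nu_j}\|_{L^\infty_t L^2_x}\, \|v^{\nu_j}-v\|_{L^1_t L^2_x}\to 0$, and handling the second via dominated convergence in $t$, using $v(t)\cdot\nabla\psi(t)\in L^2_x$ pointwise and the pointwise-in-$t$ weak convergence $\theta^{\nu_j}(t)\rightharpoonup \theta(t)$ delivered by Step~2.

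The main (mild) obstacle is the drift-product term: only the drift converges strongly while $\theta^{\nu_j}$ converges merely weakly, so one must combine strong convergence in $L^1_t L^2_x$ with the pointwise-in-$t$ weak convergence from the $C_w$ framework; plain weak-$*$ convergence in $L^\infty_t L^2_x$ would not suffice. This is exactly where the equicontinuity step pays off. A secondary point is making sure the equicontinuity of $\langle \theta^\nu(t),\phi\rangle$ is genuinely uniform in $\nu$, which follows from equi-integrability of $\{\|v^\nu\|_{L^2}\}$ in $L^1_t$, a consequence of strong $L^1_t L^2_x$-convergence of the drift.
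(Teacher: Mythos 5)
Your proposal is correct and follows essentially the same route as the paper: a uniform-boundedness plus equicontinuity argument for $t\mapsto\langle\theta^\nu(t),\phi\rangle$, an Arzel\`a--Ascoli/diagonal extraction over a countable family of test functions (the paper simply uses the Fourier modes $e^{ik\cdot x}$ in place of your dense set of smooth $\phi$), and passage to the limit in the weak formulation via the same splitting $(v^{\nu_j}-v)\theta^{\nu_j}+v(\theta^{\nu_j}-\theta)$ of the drift term. If anything, your handling of equicontinuity through equi-integrability of $\|v^\nu(\cdot)\|_{L^2}$ in $L^1_t$ is slightly more careful than the paper's Lipschitz-type bound, but the underlying argument is the same.
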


\begin{proof}
    Consider the Fourier expansion of $\th^\nu(t)$, 
\[
\th^\nu (x,t) = \sum_{k \in \Z} \hat{\th}_k^\nu (t) e^{-ik\cdot x}
\]
for each $k \in \Z$, we have that for any $t \in [0,T]$,
\begin{enumerate}[label=\itshape(\roman*)] 
    \item \label{uni_bdded_nu} $|\hat\th^\nu_k (t)| < M$ for some positive $M$, uniformly in $\nu$, \\
    \item \label{uni_equicont_nu} $\hat \th^\nu_k (t)$ is uniformly equicontinuous in $\nu$, \\
\end{enumerate}
where \ref{uni_bdded_nu} follows from the energy inequality
    \[
      \twonorm{\th^\nu (t)} \leq \twonorm{\th_0}
    \]
and that $|\hat\th^\nu_k (t)| \leq \|\hat\th^\nu_k (t)\|_{l^2} = \twonorm{\th^\nu (t)}$. In order to establish \ref{uni_equicont_nu}, we look at the weak formulation of the equation of the Fourier coefficients
\begin{align*}
\hat \th^\nu_k (t) - \hat \th^\nu_k (s) &= -\int_s^t  \hat v^\nu * k \hat \th^\nu_k (\tau) d\tau - \int_s^t \nu k^2 \hat \th^\nu_k(\tau) d\tau \\
& = -\int_s^t  \sum_{n \in \Z} \hat v^\nu_{k-n} (\tau) \hat \th_n (\tau) d\tau - \int_s^t \nu k^2 \hat \th^\nu_k(\tau) d\tau
\end{align*}
for $0 \leq s<t \leq T$. Therefore,
\begin{align*}
    |\hat \th^\nu_k (t) - \hat \th^\nu_k (s)| &\leq \bigg ( \int_s^t \| \hat v_k (\tau) \|_{l^2} \| \hat \th^\nu_k (\tau) \|_{l^2} d\tau + \nu k^2 \int_s^t  | \hat \th^\nu_k (\tau) | d\tau \bigg)\\
    &\leq \bigg( \| \th^\nu (\cdot) \|_{L^\infty([0,T],L^2)} \|v^\nu(\cdot) \|_{L^1([0.T],L^2)} + \nu k^2 M \bigg) |t-s|.
\end{align*}
These together with the assumption on $\th^\nu$ and $v^\nu$ establishes the uniform equicontinuity in $\nu$, for fixed $k \in \Z$. \\\\
By Arzela-Ascoli Theorem, there exists a continuous in time function $\hat \th_k(t)$ such that 
\[
\hat \th^{\nu_j}_k(t) \to \hat \th_k(t) \ \ \ \text{in} \ \ C([0,1],l^2)
\]
for a subsequence $\nu_j \to 0$ as $j \to \infty$. Now with $\hat \th^{\nu_j}_{k+1}(t)$ we extract a further subsequence by a similar manner that converges to $\hat \th_{k+1}(t)$ in $C([0,1],l^2)$. With a diagonal argument one can obtain a subsequence (still denoted as) $\nu_{j}$ such that \[
\th^{\nu_{j}}(t) \to \th(t) \ \ \ \text{in} \ \ C([0,1],L_w^2) \ \ \text{as} \ \ \nu_j \to 0,
\]
where 
\[
\th(t) = \sum_{k \in \Z} \hat \th_k (t) e^{-ik\cdot x}.
\]
To see the convergent is in $C([0,1],L_w^2)$, one can apply a density argument and fix a function $\phi \in L^2$ and such that $\hat \phi_k \equiv 0$ for $|k| > N_0$ for some $N_0 \in \N$. Observe that 
\begin{align*}
  \<\th(t), \phi \> &:= \sum_{k \in \Z} \hat \th_k (t) \hat \phi_k = \sum_{|k| \leq N_0} \hat \th_k (t) \hat \phi_k  \\
  &= \lim_{j \to \infty } \sum_{|k| \leq N_0} \hat \th^{\nu_j} (t) \hat \phi_k \\
  &= \lim_{j \to \infty} \<\th^{\nu_j}(t), \phi \>,
\end{align*}
uniformly in time. This proves the existence of a weak convergent sequence. The fact that $\th(t)$ is a weak solution to the transport equation follows from 
\[
v^{\nu_j} \th^{\nu_j} - v\th = \big(v^{\nu_j} -v \big)\th^{\nu_j} + v \big(\th^{\nu_j} -\th \big),
\]
which goes to $0$ weakly.
\end{proof}

\section{Proof of Theorem \ref{main_Theorem_2}}\label{proof of main_Theorem_2}
\noindent As noted in \eqref{tran_equ_rho}–\eqref{def_force_g}, $\rho(t)$ is a weak solution of the transport equation
\[
\partial_t \rho + v \cdot \nabla \rho = 0
\]
and $u(t) = (v(t),\rho(t))$ is a weak solution of the forced Euler equations with
\[
f = (g,0) = (\partial_t v + v \cdot \nabla v,\,0)
\]
and initial condition $u(0) = u_{\text{in}} = (0,\rho_{\text{in}})$ on $[0,1]$ (indeed a smooth solution on $[0,1)$). Furthermore, $\rho^m(t)$ defined in \eqref{glued_density} satisfies the transport equation with drift $v^m(t)$ from \eqref{glued_velocity} on $[0,1]$:
\begin{align*}
\partial_t \rho^m + v^m \cdot \nabla \rho^m &= 0, \\
\rho^m(0) &= \rho_{\text{in}}.
\end{align*}
Let $\theta^m$ be the unique smooth solution of the advection–diffusion equation
\begin{equation}\label{adv_equ_thetam}
    \partial_t \theta^m + v^m \cdot \nabla \theta^m = \nu_m \Delta \theta^m
\end{equation}
on $[0,1]$ with $\theta^m(0) = \rho_{\text{in}}$. Then the corresponding family of smooth NSE solutions on $[0,1]$ is
\begin{equation*}
u^m(t) = \big(v^m(t), \theta^m(t)\big),
\end{equation*}
with forcing $f^m = (g^m,0)$, where $g^m$ is given by \eqref{gm_def}. Since $v^m(0)=0$ for all $m$, we have $u^m(0) = u_{\text{in}} = (0,\rho_{\text{in}})$.
\vspace{1em}\\
We now prove that
\begin{equation} \label{proof of main_Theorem_1_1}
    u \in L^3\big([0,1]; \Bb^{\frac{1}{3},a_n}_{3,\infty}(\T^3)\big),
\end{equation}
where $\{a_n\}_{n \in \N} \in \Tt$ is the fixed sequence from the construction in Section~\ref{sec:The construction through perfect mixing}. Since $u(t) = (v(t),\rho(t))$, we estimate $\|v(t)\|_{L^3_t\Bb^{\frac{1}{3},a_n}_{3,\infty}}$ and $\|\rho(t)\|_{L^3_t\Bb^{\frac{1}{3},a_n}_{3,\infty}}$ separately.
\vspace{2em}\\
For the velocity, by Theorem~\ref{drift_velocity_est}, \eqref{drift_velocity_est_1},
\begin{align*}
    \|v\|_{L^3_t\Bb^{\frac{1}{3},a_n}_{3,\infty}}^3
    &= \bigg\|
    \sum_{n=0}^\infty \eta'(\cdot)\,\chi_{[t_n,t_{n+1}]}(\eta(\cdot))\,\frac{1}{t_{n+1}-t_n}\,
    \bar v_n\!\left(\cdot,\frac{\eta(\cdot)-t_n}{t_{n+1}-t_n}\right)
    \bigg\|_{L^3_t\Bb^{\frac{1}{3},a_n}_{3,\infty}}^3\\
    &\lesssim \sum_{n=0}^\infty
    \bigg\|
    \eta'(\cdot)\,\chi_{[t_n,t_{n+1}]}(\eta(\cdot))\,\frac{1}{t_{n+1}-t_n}\,
    \bar v_n\!\left(\cdot,\frac{\eta(\cdot)-t_n}{t_{n+1}-t_n}\right)
    \bigg\|_{L^3_t\Bb^{\frac{1}{3},a_n}_{3,\infty}}^3 \\
    &\lesssim \sum_{n=0}^\infty (\lambda_n a_n)^{-1} (\lambda_n a_n)^3 \|\bar v_n\|_{\Bb^{\frac{1}{3},a_n}_{3,\infty}(\T^3)}^3\\
    &\lesssim \sum_{n=0}^\infty  (\lambda_n a_n)^2
    \bigg(\sum_{q \ge -1} a_{q+2}\lambda_q^{1/3}\|\Delta_q \bar v_n\|_{L^3}\bigg)^3 \\
    &\lesssim \sum_{n=0}^\infty (\lambda_n a_n)^2
    \bigg(\sum_{q \ge -1} a_{q+2}\,\|\nabla^{1/3}\Delta_q \bar v_n\|_{L^3}\bigg)^3 \\
    &\lesssim \sum_{n=0}^\infty (\lambda_n a_n)^2 \lambda_n^{-2}
    \bigg(\sum_{q=1}^\infty a_q \bigg)^3
    \lesssim \sum_{n=0}^\infty a_n^2 < \infty,
\end{align*}
where we used Bernstein’s inequality (Lemma~\ref{lemma:Bernstein}). The estimate for $\rho$ is analogous, using Theorem~\ref{drift_velocity_est}, \eqref{drift_velocity_est_2}:
\begin{align*}
    \|\rho\|_{L^3_t\Bb^{\frac{1}{3},a_n}_{3,\infty}}^3
    &= \bigg\|
    \sum_{n=0}^\infty \eta'(\cdot)\,\chi_{[t_n,t_{n+1}]}(\eta(\cdot))\,
    \bar\rho_n\!\left(\cdot,\frac{\eta(\cdot)-t_n}{t_{n+1}-t_n}\right)
    \bigg\|_{L^3_t\Bb^{\frac{1}{3},a_n}_{3,\infty}}^3\\
    &\lesssim \sum_{n=0}^\infty
    \bigg\| \eta'(\cdot)\,\chi_{[t_n,t_{n+1}]}(\eta(\cdot))\,
    \bar\rho_n\!\left(\cdot,\frac{\eta(\cdot)-t_n}{t_{n+1}-t_n}\right)
    \bigg\|_{L^3_t\Bb^{\frac{1}{3},a_n}_{3,\infty}}^3 \\
    &\lesssim \sum_{n=0}^\infty (\lambda_n a_n)^{-1}
    \|\bar\rho_n\|_{\Bb^{\frac{1}{3},a_n}_{3,\infty}(\T^3)}^3\\
    &\lesssim \sum_{n=0}^\infty (\lambda_n a_n)^{-1}
    \bigg( \sum_{-1 \le q \le n} a_{q+2}\lambda_q^{1/3}\|\Delta_q \bar\rho_n\|_{L^3} \bigg)^3
    + \sum_{n=0}^\infty (\lambda_n a_n)^{-1}
    \bigg( \sum_{q>n} a_{q+2}\lambda_q^{1/3}\|\Delta_q \bar\rho_n\|_{L^3} \bigg)^3 \\
    &\lesssim \sum_{n=0}^\infty a_n^2
    + \sum_{n=0}^\infty (\lambda_n a_n)^{-1}
    \bigg( \sum_{q>n} a_{q+2}\lambda_q^{-2/3}\|\Delta_q \nabla \bar\rho_n\|_{L^3} \bigg)^3 \\
    &\lesssim \sum_{n=0}^\infty a_n^2
    + \sum_{n=0}^\infty (\lambda_n a_n)^{-1}
    \bigg( \sum_{q>n} a_{q+2}\lambda_q^{-2/3}\lambda_n \bigg)^3 \\
    &\lesssim \sum_{n=0}^\infty a_n^2 < \infty.
\end{align*}
This established \eqref{proof of main_Theorem_1_1}.
\vspace{1em}\\
The convergence $u^m \to u$ follows from the definitions \eqref{glued_velocity_lim}–\eqref{glued_density_lim} and Lemma~\ref{lemma:theta_convergence} (after passing to a subsequence if necessary). The convergence $f^m \to f$ follows from Lemma~\ref{bounded_force_g}.
\vspace{1em}\\
To prove \eqref{main_Theorem_2_1}, we use \eqref{glued_velocity_lim}–\eqref{glued_density_lim} together with Theorem~\ref{drift_velocity_est} to obtain
\begin{align*}
 \int_0^1 \lambda_q \,\|\Delta_q v(t)\|_{L^3}\,\|\Delta_q \rho(t)\|_{L^3}^2 \,dt
 &\le \bigg( \int_0^1 \lambda_q \|\Delta_q v\|_{L^3}^3 dt \bigg)^{\!1/3}
       \bigg( \int_0^1 \lambda_q \|\Delta_q \rho\|_{L^3}^3 dt \bigg)^{\!2/3} \\
&\le \big(\lambda_q \lambda_q^{-3} (\lambda_q a_q)^3 (\lambda_q a_q)^{-1}\big)^{1/3}
      \big(\lambda_q (\lambda_q a_q)^{-1}\big)^{2/3} \\
&\le a_q^{2/3} a_q^{-2/3} \lesssim 1.
\end{align*}
Taking $\limsup_{q\to\infty}$ on both sides yields the claim. A similar argument yields \eqref{main_Theorem_2_2}.
 \\\\
We now show that with the choice of $\nu_m$ in \eqref{def_nu_m}, we obtain total viscous anomalous dissipation at time $t = 1$, namely \eqref{main_Theorem_2_3}. Let $P_{\le \Lambda_m}$ denote the frequency projection below $\Lambda_m$ defined in \eqref{def_Lambda_m}. Recall the energy equality for $\theta^m$:
\begin{equation}\label{energy_equ_thetam}
    \|\theta^m(t)\|_{L^2}^2
    + 2\nu_m \int_0^t \|\nabla \theta^m(\tau)\|_{L^2}^2\, d\tau
    = \|\theta^m(0)\|_{L^2}^2.
\end{equation}
Thus,
\[
    2\nu_m \int_0^1 \|\nabla \theta^m(t)\|_{L^2}^2\,dt
    \le \|\theta^m(0)\|_{L^2}^2 = 1.
\]
By Lemma~\ref{lemma:vanish_viscosity_esti}, for any $T \in [t_{m+1}^m,1]$,
\begin{align}
    \sup_{t\in [0,t_{m+1}^m]} \|\theta^m(t)-\rho^m(t)\|_{L^2}^2
    &\le
    \left(
    2\nu_m \int_0^T \|\nabla \rho^m\|_{L^2}^2 dt
    \right)^{1/2}
    \left(
    2\nu_m \int_0^T \|\nabla \theta^m\|_{L^2}^2 dt
    \right)^{1/2} \notag\\
    &\le
    \left(
    2\nu_m \int_0^T \|\nabla \rho^m\|_{L^2}^2 dt
    \right)^{1/2} \notag\\
    &\lesssim \sqrt{\nu_m (T - t_m^m)}\,\lambda_m.
    \label{L2_diff_est}
\end{align}
From \eqref{def_time_scale}, we have
\[
t_{m+1}^m - t_m^m \sim (\lambda_m a_m)^{-1},
\]
and $\nu_m = a_m^2 \lambda_m^{-1}$ gives
\vspace{1em}\\
\[
    \sup_{t\in [0,t_{m+1}^m]}
    \|\theta^m(t)-\rho^m(t)\|_{L^2}^2
    \lesssim a_m^{1/2}.
\]
From \eqref{glued_density_est},
\begin{align}
    \|P_{\le \Lambda_m} \rho^m(t)\|_{L^2}
    &\le \Lambda_m \|\rho^m(t)\|_{\dot{H}^{-1}}
    \notag\\
    &\lesssim a_m^{1/2},
    \label{L2_lowmode_est_rho}
\end{align}
for $t \in [t_m^m, t_{m+1}^m]$. Hence,
\begin{align*}
    \|P_{\le \Lambda_m} \theta^m(t_{m+1}^m)\|_{L^2}
    &\le
    \|P_{\le \Lambda_m}(\theta^m - \rho^m)(t_{m+1}^m)\|_{L^2}
    + \|P_{\le \Lambda_m}\rho^m(t_{m+1}^m)\|_{L^2} \\
    &\lesssim a_m^{1/2}
    \xrightarrow[m\to\infty]{} 0.
\end{align*}
\noindent Since $v^m \equiv 0$ on $[t_{m+1}^m,1]$, $\theta^m$ solves the heat equation
\[
    \partial_t \theta^m = \nu_m \Delta \theta^m,
    \quad t \in [t_{m+1}^m,1],
\]
and the energy inequality yields
\begin{align}
    \|P_{\le \Lambda_m} \theta^m(1)\|_{L^2}^2
    &\le
    \|P_{\le \Lambda_m} \theta^m(t_{m+1}^m)\|_{L^2}^2 \notag\\
    &\lesssim a_m
    \xrightarrow[m\to\infty]{} 0.
    \label{conv_low_mode1}
\end{align}
We now turn to the high-frequency component. Again, using the fact that $\theta^m$ satisfies the heat equation for $t \in [t_{m+1}^m,1]$, we compute
\begin{align*}
    \frac{d}{dt} \|P_{> \Lambda_m} \theta^m(t)\|_{L^2}^2
    &= -2\nu_m \|\nabla P_{> \Lambda_m} \theta^m(t)\|_{L^2}^2 \\
    &\le -2\nu_m \Lambda_m^2 \|P_{> \Lambda_m} \theta^m(t)\|_{L^2}^2.
\end{align*}
Therefore, for every $t \in [t_{m+1}^m, 1]$,
\begin{align}
    \|P_{> \Lambda_m}\theta^m(t)\|_{L^2}^2
    &\le \|\theta^m(t_{m+1}^m)\|_{L^2}^2
    e^{-2\nu_m \Lambda_m^2 (t - t_{m+1}^m)} \notag \\
    &\le \|\theta^m(0)\|_{L^2}^2
    e^{-2\nu_m \Lambda_m^2 (t - t_{m+1}^m)} \notag \\
    &\le e^{-2\nu_m \Lambda_m^2 (t - t_{m+1}^m)}. \notag
\end{align}
Hence,
\begin{equation}\label{conv_high_mode1}
    \|P_{> \Lambda_m}\theta^m(1)\|_{L^2}^2
    \le e^{-2\nu_m \Lambda_m^2 (1 - t_{m+1}^m)}
    \le e^{-2m}
    \xrightarrow[m\to\infty]{} 0.
\end{equation}
The final inequality uses the bound $1 - t_{m+1}^m \ge \tau_m$ and
\[
\nu_m \Lambda_m^2 (1 - t_{m+1}^m)
\ge \nu_m \Lambda_m^2 \tau_m
= m.
\]
Combining \eqref{conv_low_mode1} and \eqref{conv_high_mode1}, we conclude that
\begin{equation}\label{conv_theta^m}
    \lim_{m \to \infty}
    \|\theta^m(1)\|_{L^2}^2 = 0.
\end{equation}
Finally, using \eqref{conv_theta^m} together with the energy equality
\eqref{energy_equ_thetam},
\begin{equation}\label{TDA_conclusion}
    \lim_{m \to \infty} 2\nu_m \int_0^1 \|\nabla \theta^m(t)\|_{L^2}^2\,dt
    = \|\theta^m(0)\|_{L^2}^2
    - \lim_{m \to \infty} \|\theta^m(1)\|_{L^2}^2
    = 1
    = \|u_{\mathrm{in}}\|_{L^2}^2.
\end{equation}
Thus, by Lemma~\ref{lemma:zero_work_by_force}, we obtain \eqref{main_Theorem_2_3}.
This completes the proof of Theorem~\ref{main_Theorem_2}.
\qed

\section{Second construction}\label{Sec_Second_construction}
\noindent In this section, we set $\tilde{\lambda}_n := 2^n$. For the proof of Theorem~\ref{main_Theorem_3}, we construct a weak solution to the forced Euler equations where the external force is slightly rougher such that the sequence $f^{\nu_n}$ may not converge in $L^1([0,T];L^2(\T^3))$. To that end, we begin with the following definition.
\begin{Definition}[\cite{CL21}]\label{def:N(Q_T)}
Let $T > 0$ and define the spacetime domain $Q_T := \T^3 \times \left[0,T \right)$. We say that a pair $\left( u,f \right)$ is a smooth solution of the Euler equations~\eqref{eq:Euler} with finite energy input if the following conditions hold:
\begin{enumerate}[label=\textit{(\roman*)}]
\item $u \in C^\infty_{t,x}(Q_T) \cap L^\infty([0,T];L^2(\T^3))$ and $f \in C^\infty_{t,x}(Q_T)$. \label{def:N(Q_T)_1}
\item The pair $(u,f)$ satisfies the Euler equations~\eqref{eq:Euler} on $Q_T$. \label{def:N(Q_T)_2}
\item The duality pairing $\langle f,u\rangle \in L^1_{\mathrm{loc}}([0,T))$ and the limit
\[
\lim_{t \to T^-} \int_0^t \langle f(s),u(s) \rangle \, ds
\]
exists. \label{def:N(Q_T)_3}
\item The limit $\lim_{t \to T^-} f(t)$ exists in $\mathcal{D}'(\T^3)$. \label{def:N(Q_T)_4}
\end{enumerate}
\end{Definition}
\noindent We will use the shorthand notation $(u,f) \in \mathcal{N}(Q_T)$ whenever $(u,f)$ satisfies Definition~\ref{def:N(Q_T)}.  
Assuming $(u,f) \in \mathcal{N}(Q_T)$, one may derive the following truncated energy equality by multiplying the Euler equations by $\Ss_q \Ss_q u$ and integrating over $\T^3 \times (T-h,T)$:
\begin{equation}\label{eq:Energy_equality_Q_T}
\frac{1}{2}\| \Ss_q u(T) \|_{L^2}^2
-
\frac{1}{2}\| \Ss_q u(T-h) \|_{L^2}^2
=
- \int_{T-h}^{T} \langle \div (u \otimes u), \Ss_q \Ss_q u \rangle \, dt
+
\int_{T-h}^{T} \langle \Ss_q f, \Ss_q u \rangle \, dt.
\end{equation}

\begin{Definition}[\cite{CL21}]\label{def:Anomalous_dissipation_q_shell}
For $(u,f) \in \mathcal{N}(Q_T)$, we define the inviscid anomalous dissipation at the $q$-th dyadic shell by
\begin{equation}\label{def:anomalous_dissipation_amount}
\Pi_q(t,h)
:=
\int_{[t-h,t+h]\cap [0,T]}
\left\langle \div(u \otimes u), \Ss_q(\Ss_q u) \right\rangle \, d\tau,
\end{equation}
and the inviscid anomalous work at the $q$-th dyadic shell by
\begin{align}\label{def:anomalous_work_q_shell_1}
\Phi_q(t,h)
&:=
\int_{[t-h,t+h]\cap [0,T]}
\big( \langle \Ss_q f, \Ss_q u \rangle - \langle f,u\rangle \big) \, d\tau,
\ \ \ t < T, \\
\Phi_q(T,h)
&:=
\int_{T-h}^{T} \langle \Ss_q f, \Ss_q u \rangle \, d\tau
-
\lim_{t_1 \to T^-}
\int_{T-h}^{t_1} \langle f,u\rangle \, d\tau.
\label{def:anomalous_work_q_shell_2}
\end{align}
\end{Definition}
\noindent With Definition~\ref{def:anomalous_dissipation_amount}, the identity
\eqref{eq:Energy_equality_Q_T} can be rewritten as
\[
\frac{1}{2}\| \Ss_q u(T)\|_{L^2}^2 - \frac{1}{2} \|\Ss_q u(T-h)\|_{L^2}^2
=
\lim_{t_1 \to T^-} \int_{T-h}^{t_1} \langle f,u \rangle \, d\tau
-
\Pi_q(T,h)
+
\Phi_q(T,h).
\]
Since $u \in L^\infty([0,T];L^2)$ by
Definition~\ref{def:N(Q_T)}\ref{def:N(Q_T)_1},
taking $\limsup_{q \to \infty}$ on both sides yields the following
energy jump relation:
\begin{equation}\label{eq:energy_jump_1}
\frac{1}{2}\|u(T)\|_{L^2}^2 - \frac{1}{2}\|u(T-h)\|_{L^2}^2
=
\lim_{t_1 \to T^-} \int_{T-h}^{t_1} \langle f,u \rangle \, d\tau
+
\limsup_{q \to \infty}\big( -\Pi_q(T,h) + \Phi_q(T,h) \big).
\end{equation}\\
If, in addition, $\langle f,u \rangle \in L^1(0,T)$ and
$\langle f,u\rangle \ge 0$ on $(0,T)$, then
\eqref{eq:energy_jump_1} simplifies to
\begin{equation}\label{eq:energy_jump_2}
\frac{1}{2}\|u(T)\|_{L^2}^2 - \frac{1}{2}\|u(T-h)\|_{L^2}^2
=
\int_{T-h}^{T} \langle f,u \rangle \, d\tau
+
\limsup_{q \to \infty}\big( -\Pi_q(T,h) + \Phi_q(T,h) \big),
\end{equation}
where
\[
\Phi_q(T,h)
=
\int_{T-h}^{T} \big( \langle \Ss_q f, \Ss_q u \rangle
- \langle f,u \rangle \big)\, d\tau.
\]
\begin{Definition}[\cite{CL21}]
Let $(u,f) \in \mathcal{N}(Q_T)$. For any $t \in [0,T]$, the
\emph{amount of invisicid anomalous dissipation} is defined by
\begin{equation}\label{def:AD_amount}
\Pi(t) := \limsup_{q \to \infty} |\Pi_q(t,h)|
\end{equation}
and the \emph{amount of anomalous work} is defined by
\begin{equation}\label{def:AW_amount}
\Phi(t) := \limsup_{q \to \infty} |\Phi_q(t,h)|.
\end{equation}
\end{Definition}
\noindent We next introduce a family of building blocks used in the second construction.
\begin{Lemma}[\cite{CL21}]\label{lemma:build_block_second_construction}
Let $d=3$ and $0 \le \beta < d$. Then there exist $N \in \N$ and a family of divergence-free vector fields $\{W_n\}_{n \ge N} \subset C^\infty(\T^d)$ such that:
\begin{enumerate}[label=\textit{(\roman*)}]
\item\label{lemma:build_block_second_construction_1}
The Fourier support of $W_n$ is contained in an annulus of radius $\tilde{\lambda}_n$:
\[
\operatorname{supp}\widehat{W}_n \subset \{k : \tilde{\lambda}_n \le |k| \le \tilde{\lambda}_{n+1} \}.
\]

\item\label{lemma:build_block_second_construction_2}
Their $L^r$–norms satisfy
\[
\|W_n\|_{L^2} = 1
\quad \text{and} \quad
\|W_n\|_{L^r} \sim_r \tilde{\lambda}_n^{\beta(\frac{1}{2}-\frac{1}{r})}.
\]

\item\label{lemma:build_block_second_construction_3}
The energy flux through dyadic shells satisfies
\[
\lim_{n\to\infty}
\tilde{\lambda}_n^{-1-\frac{\beta}{2}}
\int_{\T^3}
\div\big( \Ss_n(W_n\otimes W_n)\big)\cdot (\Ss_n W_n)\,dx
= 2,
\]
and for $q\ne n$,
\[
\int_{\T^3}
\div\big( \Ss_q(W_n\otimes W_n)\big)\cdot (\Ss_q W_n)\,dx =0.
\]
\end{enumerate}
\end{Lemma}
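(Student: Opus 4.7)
The plan is to construct $W_n$ as a divergence-free, spatially concentrated, frequency-localized vector field in the spirit of the intermittent building blocks developed in the convex-integration literature cited by \cite{CL21}. Spatial concentration on a set of measure $\sim\tilde\lambda_n^{-\beta}$ delivers the intermittency scaling (ii); modulation at frequency $\sim\tilde\lambda_n$ places the Fourier support in the prescribed annulus (i); and a careful choice of polarization together with the normalization produces the flux identity (iii).

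Step 1 (profile and norms). Fix once and for all a radial bump $\psi\in C_c^\infty(\R^3)$ with $\int\psi^2=1$ and set $\psi_n(x):=\tilde\lambda_n^{\beta/2}\psi(\tilde\lambda_n^{\beta/3}x)$, suitably periodized on $\T^3$. A change of variables yields $\|\psi_n\|_{L^r(\T^3)}\sim_r\tilde\lambda_n^{\beta(1/2-1/r)}$, so in particular $\|\psi_n\|_{L^2}=1$, and $\widehat{\psi_n}$ is essentially supported in $\{|k|\lesssim\tilde\lambda_n^{\beta/3}\}\subset\{|k|\ll\tilde\lambda_n\}$ since $\beta<3$. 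Pick $k_n\in\Z^3$ with $|k_n|$ lying in the interior of $[\tilde\lambda_n,\tilde\lambda_{n+1}]$, choose an orthonormal pair $v_n,w_n\in\{k_n\}^\perp$, and define
\begin{equation*}
W_n(x):=\mathbb{P}\bigl(\psi_n(x)[v_n\cos(2\pi k_n\cdot x)+w_n\sin(2\pi k_n\cdot x)]\bigr),
\end{equation*}
where $\mathbb{P}$ is the Leray projector. Since multiplication by $\cos(2\pi k_n\cdot x)$ and $\sin(2\pi k_n\cdot x)$ shifts the frequency support of $\psi_n$ by $\pm k_n$, and $\mathbb{P}$ is a Fourier multiplier, the Fourier support of $W_n$ sits inside the prescribed annulus for $n$ large, giving (i). The orthogonality $v_n,w_n\perp k_n$ ensures that the Leray correction is of size $\tilde\lambda_n^{\beta/3-1}$ relative to the main term and hence negligible in $L^r$; this produces the $L^2$-normalization and (ii).

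Step 2 (flux identities). To verify (iii), I would expand $W_n\otimes W_n$ into a mean part (Fourier support near the origin, of size $\lesssim\tilde\lambda_n^{2\beta/3}$) and an oscillatory part supported near $\pm 2k_n$, then compute $\int\div(\Ss_n(W_n\otimes W_n))\cdot\Ss_n W_n\,dx$ via Plancherel as a three-wave integral in the harmonics of $W_n$ at $\pm k_n$. The dominant contribution arises from the triadic resonance in which the oscillation of $W_n\otimes W_n$ pairs with $\Ss_n W_n$, with the divergence producing a factor $\sim\tilde\lambda_n$ and $\|\psi_n\|_{L^3}^3=\tilde\lambda_n^{\beta/2}$ supplying the intermittency power; together these yield a contribution of size $\tilde\lambda_n^{1+\beta/2}$, and the limit constant is normalized to $2$ by the choice of polarization $(v_n,w_n)$ and the amplitude of $\psi$. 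For $q\ne n$ the relevant harmonics either vanish under $\Ss_q$ or fail to resonate in the three-wave integral (the surviving symbols integrate to zero by $v_n,w_n\perp k_n$), producing zero flux at every other shell.

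The main obstacle will be matching the various cutoff scales — the profile frequency $\tilde\lambda_n^{\beta/3}$, the carrier $|k_n|\sim\tilde\lambda_n$, and the Littlewood--Paley partition defining $\Ss_q$ — so that (a) $W_n$ sits strictly inside the prescribed annulus, (b) $\Ss_n(W_n\otimes W_n)$ isolates exactly the harmonics needed to deliver the constant $2$ in the limit, and (c) the subleading contributions from the Leray corrector and from the slow variation of $\psi_n$ genuinely vanish as $n\to\infty$. These are bookkeeping conditions that can be met by choosing $N$ large depending on $\beta<3$, after which (iii) reduces to a finite Plancherel computation whose leading coefficient is independent of $n$.
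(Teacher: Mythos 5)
Your Steps~1 and the norm bookkeeping for (i)--(ii) are fine, and in fact the vanishing of the flux for $q\neq n$ follows from Fourier-support considerations alone for \emph{any} $W_n$ with spectrum in the prescribed annulus. The genuine gap is the claimed ``triadic resonance'' in Step~2, and it is fatal: for a single-carrier wave packet $W_n=\mathbb{P}\bigl(\psi_n[v_n\cos(2\pi k_n\cdot x)+w_n\sin(2\pi k_n\cdot x)]\bigr)$ with profile bandwidth $\sim\tilde\lambda_n^{\beta/3}\ll\tilde\lambda_n$, the spectrum of $W_n\otimes W_n$ is contained in small neighborhoods of $0$ and $\pm 2k_n$, while the spectrum of $\Ss_n W_n$ sits in small neighborhoods of $\pm k_n$. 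By Plancherel the flux integral $\int\div\bigl(\Ss_n(W_n\otimes W_n)\bigr)\cdot\Ss_n W_n\,dx$ picks up only frequency pairs summing to zero, which would force $|k_n|\lesssim\tilde\lambda_n^{\beta/3}$; since $\beta<3$ this is impossible for large $n$, so the integral is \emph{identically zero at every shell, including $q=n$}. The limit in (iii) would then be $0$, not $2$, and no choice of polarization $(v_n,w_n)$ or amplitude of $\psi$ can fix a quantity that vanishes exactly. The non-resonance you correctly invoke to kill the $q\neq n$ fluxes kills the $q=n$ flux as well.

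The structural reason is that $\int\div(W\otimes W)\cdot W\,dx=0$ for any divergence-free $W$, so a nonzero truncated flux can only arise when the low-pass cutoff $\Ss_n$ (whose transition region $[\tfrac32\tilde\lambda_n,\tilde\lambda_{n+1}]$ lies inside the annulus) asymmetrically damps genuine triads $k+k'+k''=0$ with \emph{all three} wavevectors in $\operatorname{supp}\widehat W_n$. A narrow packet around a single carrier admits no such triads. The construction in \cite{CL21}, which this paper quotes rather than reproves, therefore builds $W_n$ from several interacting modes (wave packets whose carriers form exact triads within the dyadic annulus, placed on both sides of the transition zone of $\Ss_n$), with spatial concentration supplying the intermittency scaling in (ii) and a final normalization producing the constant $2$ in (iii). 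To repair your proposal you would need to replace the single carrier $k_n$ by such a triadic family and then redo the Plancherel computation of the truncated flux, verifying that the cross-interactions between distinct carriers dominate and survive the cutoff; the single-carrier ansatz cannot be salvaged.
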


\begin{Remark}
The parameter $\beta$ is related to the intermittency dimension $D$ via
\(
D = 3 - \beta.
\)
See, for example, \cite{CL20} or \cite{cheskidov2024optimal} for a definition of intermittency dimension using Bernstein inequalities.
\end{Remark}
\vspace{1em}
\noindent Let $N \in \N$ and the family $\{W_n\}_{n\ge N}$ be given by Lemma~\ref{lemma:build_block_second_construction} with a fixed $\beta<3$, to be specified later. Define the time scale
\begin{equation}\label{def:second_construction_time_step}
\tau_n
:=
\frac{1}{4}\big(1 - 2^{-1-\frac{\beta}{2}}\big)^{-1}
\tilde{\lambda}_n^{-1-\frac{\beta}{2}},
\qquad n \ge N.
\end{equation}
Moreover, for every $\varepsilon>0$ there exists a constant $c_\varepsilon>0$ depending only on $\varepsilon$ such that
\begin{equation}\label{bound:time_step_condition}
\tau_n > \tau_{n+1} + c_\varepsilon\, \tau_{n+1}^{1+\frac{\varepsilon}{4}},
\qquad \text{for all } n \ge N.
\end{equation}
Let $h \in C^\infty(\R)$ be a smooth cutoff function such that
\[
h(t)=
\begin{cases}
0, & t\le -1,\\
1, & t\ge 0,
\end{cases}
\qquad
h^{1/2},\, (1-h)^{1/2}\in C^\infty(\R).
\]
Define
\[
h_n(t)
:=
h\!\left(\frac{t+1}{c_\varepsilon \tau_n^{\varepsilon/4}}\right),
\]
so that $h_n \in C^\infty(\R)$ and
\[
h_n(t)=
\begin{cases}
0,& t \le -1 - c_\varepsilon \tau_n^{\varepsilon/4},\\
1,& t \ge -1.
\end{cases}
\]
Furthermore,
\[
|h_n'(t)| \lesssim c'_\varepsilon \tau_n^{-\varepsilon/4}.
\]
Now define
\begin{equation}\label{def:time_gluing_second_construction}
\chi_n(t)
:=
\Bigg[
h_n\!\left( \frac{t-T}{\tau_n} \right)
-
h_{n+1}\!\left( \frac{t-T}{\tau_{n+1}} \right)
\Bigg]^{1/2},
\qquad T := \tau_N.
\end{equation}
Using \eqref{bound:time_step_condition}, one checks that \(\chi_n\) has the piecewise structure:
\[
\chi_n(t)=
\begin{cases}
0,
& t \le T-\tau_n - c_\varepsilon \tau_n^{1+\varepsilon/4},\\[4pt]
h_n^{1/2}\!\left(\frac{t-T}{\tau_n}\right),
& T-\tau_n - c_\varepsilon \tau_n^{1+\varepsilon/4}
\le t \le
T-\tau_n,\\[4pt]
1,
& T-\tau_n \le t \le T-\tau_{n+1}-c_\varepsilon \tau_{n+1}^{1+\varepsilon/4},\\[4pt]
\big(1-h_{n+1}((t-T)/\tau_{n+1})\big)^{1/2},
& T-\tau_{n+1}-c_\varepsilon \tau_{n+1}^{1+\varepsilon/4}
\le t \le T-\tau_{n+1},\\[4pt]
0,
& t \ge T-\tau_{n+1}.
\end{cases}
\]
Hence, for all \(t \in [0,T)\),
\begin{equation}\label{eq:sum_char_n}
\sum_{n=N}^\infty \chi_n(t)^2 = 1 
\ \  \text{and} \ \ 
\sum_{n=N}^\infty \chi_n(T)^2 = 0.
\end{equation}
Since each \(h_n\) is a translation and rescaling of \(h\), it follows that \(\chi_n \in C_c^\infty(\R)\). Finally, we have the derivative bound
\begin{equation}\label{eq:chi_derivative_est}
|\chi_n'(t)|
\lesssim
\tau_n^{-1-\varepsilon/4}\,
\mathbbm{1}_{[T-\tau_n-c_\varepsilon \tau_n^{1+\varepsilon/4},\,T-\tau_n]}
+
\tau_{n+1}^{-1-\varepsilon/4}\,
\mathbbm{1}_{[T-\tau_{n+1}-c_\varepsilon \tau_{n+1}^{1+\varepsilon/4},\,T-\tau_{n+1}]}.
\end{equation}
We are now ready to introduce the solutions to the forced Euler and
Navier–Stokes equations. First, define the Euler solution
\begin{equation}\label{def:second_construction_Euler}
u(x,t)
=
\sum_{n=N}^{\infty} \chi_n(t)\, W_n(x),
\end{equation}
with the external force
\begin{equation}\label{def:second_construction_Euler_force}
f(x,t)
=
\partial_t u + \operatorname{div}(u \otimes u).
\end{equation}

\begin{Lemma}\label{lemma:forced_Euler_properties}
The pair $(u,f)$ defined above satisfies: \\
\begin{enumerate}[label=\textit{(\roman*)}]
\item $(u,f) \in \mathcal{N}(Q_T)$; \label{lemma:forced_Euler_properties_1}\\

\item $\|u(t)\|_{L^2}=1$ for all $t \in [0,T)$, while $\|u(T)\|_{L^2}=0$; \label{lemma:forced_Euler_properties_2}\\

\item $u \in L^\infty([0,T];L^2(\T^3))$ and
\[
\lim_{t \to T^-}\langle u(t),\phi\rangle = 0
\quad
\text{for all } \phi \in L^2(\T^3);\label{lemma:forced_Euler_properties_3}
\]
\item $f \in L^{1,w}([0,T];L^2(\T^3))$. \label{lemma:forced_Euler_properties_4}
\end{enumerate}
\end{Lemma}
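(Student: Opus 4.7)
The plan is to verify the four items in order, exploiting the frequency separation of the building blocks $W_n$ together with the time-localization of $\chi_n$. The first three items reduce to bookkeeping via orthogonality and \eqref{eq:sum_char_n}, while item (iv), the weak-$L^1$ bound, is the delicate step and the main obstacle.

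For the smoothness part of (i), I would first note that for any fixed $t_0 < T$ only finitely many $\chi_n$ are nonzero at $t_0$, since $\tau_n \to 0$ and the support of $\chi_n$ retreats to $T$. Thus on any compact subinterval of $[0,T)$ the sum \eqref{def:second_construction_Euler} is locally finite, so $u, f \in C^\infty_{t,x}(Q_T)$. Item (ii) then follows from Lemma \ref{lemma:build_block_second_construction}\ref{lemma:build_block_second_construction_1}--\ref{lemma:build_block_second_construction_2}, which makes the $W_n$ pairwise $L^2$-orthogonal with unit norm: combined with \eqref{eq:sum_char_n} this gives $\|u(t)\|_{L^2}^2 = \sum_{n\ge N}\chi_n(t)^2$, equal to $1$ on $[0,T)$ and $0$ at $t=T$, which also yields the $L^\infty_t L^2_x$ assertion of (iii). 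The Euler equation holds by construction with $p\equiv 0$, and since $u$ is smooth and divergence-free the work pairing $\langle f,u\rangle = \tfrac{1}{2}\tfrac{d}{dt}\|u\|_{L^2}^2$ vanishes identically on $[0,T)$, proving Definition \ref{def:N(Q_T)}\ref{def:N(Q_T)_3}.

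For the existence of $\lim_{t\to T^-}f(t)$ in $\mathcal{D}'$ (Definition \ref{def:N(Q_T)}\ref{def:N(Q_T)_4}) and the weak $L^2$-vanishing in item (iii) of the lemma, I would test against smooth $\phi$. Since $\widehat{W_n}$ is supported in $\{|k|\ge \tilde\lambda_n\}$, repeated integration by parts yields $|\langle W_n,\phi\rangle|\lesssim_k \tilde\lambda_n^{-k}\|\phi\|_{H^k}$ for any $k\ge 0$. As $t\to T^-$ the only active indices satisfy $n\ge n(t)\to\infty$, so choosing $k$ large enough to absorb the growth $|\chi_n'(t)|\lesssim \tau_n^{-1-\varepsilon/4}$ forces $\langle u(t),\phi\rangle$ and $\langle f(t),\phi\rangle$ to $0$; density together with the uniform bound $\|u(t)\|_{L^2}\le 1$ then extends the first convergence to every $\phi\in L^2$.

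The hard part will be (iv). The plan is to use orthogonality to write $\|\partial_t u(t)\|_{L^2}^2 = \sum_{n}\chi_n'(t)^2$, which by \eqref{eq:chi_derivative_est} is bounded by $\tau_n^{-2-\varepsilon/2}$ on a transition interval $I_n$ of length $\lesssim \tau_n^{1+\varepsilon/4}$, and then combine this with the Bernstein-type bounds $\|u(t)\|_{L^\infty}\lesssim \tilde\lambda_{n(t)}^{\beta/2}$ and $\|\nabla u(t)\|_{L^2}\lesssim \tilde\lambda_{n(t)}$ from Lemma \ref{lemma:build_block_second_construction}\ref{lemma:build_block_second_construction_2}, giving $\|\div(u\otimes u)\|_{L^2}\lesssim \tilde\lambda_{n(t)}^{1+\beta/2}\sim (T-t)^{-1}$ via \eqref{def:second_construction_time_step}. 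Putting these together,
\[
    \|f(t)\|_{L^2} \;\lesssim\; (T-t)^{-1} + \sum_{n\ge N} \tau_n^{-1-\varepsilon/4}\,\mathbbm{1}_{I_n}(t).
\]
The bulk piece $(T-t)^{-1}$ sits exactly on the weak-$L^1$ borderline with distribution function $1/\lambda$. For the transition piece at level $\lambda$ only indices $n$ with $\tau_n^{-1-\varepsilon/4}>\lambda$ contribute, and the geometric decay of $\tau_n$ will show that $\sum \tau_n^{1+\varepsilon/4}$ over the contributing indices is dominated by its largest term $\sim \lambda^{-1}$, so that $\lambda\,\mu\{\|f(t)\|_{L^2}>\lambda\}\lesssim 1$. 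The real obstacle is that both contributions saturate weak-$L^1$ simultaneously, so one must track the geometric decay of $\tau_n$ and the cutoff exponent $\varepsilon/4$ sharply; this is ultimately what forces the precise separation condition \eqref{bound:time_step_condition} on the time scales.
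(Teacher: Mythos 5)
Your handling of items (ii), (iii) and of the weak-$L^1$ bound in (iv) follows essentially the same route as the paper. For (iv) the paper makes exactly your splitting: $\partial_t u$ exceeds the level $\tau_n^{-1-\varepsilon/4}$ only on the transition intervals of length $\lesssim \tau_n^{1+\varepsilon/4}$ (via \eqref{eq:chi_derivative_est}), while $\div(u\otimes u)$ is bounded by $\|W_n\|_{L^\infty}\|\nabla W_n\|_{L^2}\lesssim \tilde{\lambda}_n^{1+\beta/2}\sim \tau_n^{-1}\sim (T-t)^{-1}$ on $\supp\chi_n$, and then one checks $\lambda\,\mu\{t:\|f(t)\|_{L^2}>\lambda\}\lesssim 1$; your accounting over all levels $\lambda$, summing the contributing $\tau_n^{1+\varepsilon/4}$ geometrically, is if anything a bit more complete than the paper's check along the two natural sequences of levels. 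For the weak vanishing in (iii) the paper uses the decay of $\|W_n\|_{L^r}$ for $r<2$ where you use the Fourier support of $W_n$ plus density and the uniform $L^2$ bound; both are valid since $\hat{u}(t)$ genuinely lives at frequencies $\ge\tilde{\lambda}_{n(t)}\to\infty$. Your observation that $\langle f,u\rangle\equiv 0$ on $[0,T)$ is the same as the paper's energy-equality argument for Definition \ref{def:N(Q_T)}\ref{def:N(Q_T)_3}.

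The genuine gap is in your verification of Definition \ref{def:N(Q_T)}\ref{def:N(Q_T)_4}. You argue that $\langle f(t),\phi\rangle\to 0$ by repeated integration by parts, using that $\widehat{W}_n$ is supported in $\{|k|\ge\tilde{\lambda}_n\}$. That argument controls only the linear part $\partial_t u=\sum_n\chi_n'W_n$; it does not apply to $\div(u\otimes u)$, because the Fourier support of $W_n\otimes W_n$ (and of the cross terms $W_n\otimes W_{n+1}$) is \emph{not} confined to high frequencies: products of high-frequency blocks carry low-frequency content, e.g.\ the spatial mean of $W_n\otimes W_n$ is a matrix of trace $\|W_n\|_{L^2}^2=1$, which does not decay in $n$. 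Consequently no amount of integration by parts gives smallness of $\langle \div(u\otimes u)(t),\phi\rangle=-\int_{\T^3} u\otimes u:\nabla\phi\,dx$, and your claim that the limit of $f(t)$ is zero is unjustified (and not needed): the definition only requires that $\lim_{t\to T^-}f(t)$ \emph{exist} in $\mathcal{D}'(\T^3)$, and establishing this requires knowing that the low-frequency part of $W_n\otimes W_n$ has a weak limit as $n\to\infty$, a property of the \cite{CL21} building blocks that is not contained in Lemma \ref{lemma:build_block_second_construction} as you invoke it. (The paper's own appeal to Plancherel at this point is terse, but your stated argument, as written, leaves this term uncontrolled.) Apart from this step, the proposal is sound and coincides with the paper's proof.
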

\vspace{1em}
\begin{proof}
The conditions \ref{def:N(Q_T)_1} and \ref{def:N(Q_T)_2} in definition $\ref{def:N(Q_T)}$ follow directly from \eqref{def:second_construction_Euler} and \eqref{def:second_construction_Euler_force}. Now since $(u,f)$ is a smooth solution on $Q_T$, they satisfy the energy equality
\[
\twonorm{u(t)}^2 - \twonorm{u_\initial}^2 = \int_0^t \<f(\tau),u(\tau) \> d\tau, \ \ \ 0 \leq t < T.
\]
From \eqref{eq:sum_char_n} we have $\lim_{t \to T^-} \twonorm{u(t)}^2 =\twonorm{u_\initial}^2 = 1$, hence by sending $t \to T^-$ of the above:
\[
\lim_{t \to T^-} \int_0^t \<f,u \> d\tau = 0,
\]
in particular the limit exists. Finally, the fact that the $\lim_{t \to T^-} f(t)$ exists in distribution follows from the Plancherel's identity. Therefore $(u,f) \in \Nn(Q_T)$. Property \ref{lemma:forced_Euler_properties_2} follows immediately from the definition of $u(t)$ and $\sum_{n \geq N} \chi^2_n(T) = 0$, from which we see also that $u \in L^\infty ([0,T];L^2(\T^3))$. By lemma \ref{lemma:build_block_second_construction}, \ref{lemma:build_block_second_construction_2}, we conclude that 
\[
\lim_{t \to T^-} \<u(t), \phi \> = 0, \ \ \ \forall \phi \in L^2(\T^3)
\]
using the $L^r(\T^3)$ scaling with $r < 2$. To show \ref{lemma:forced_Euler_properties_4}, first recall that 
\[
f = \p_t u + \divv (u \otimes u).
\]
Then notice that 
\[
\twonorm{\p_t u} \les \sum_{n = N}^\infty |\chi'_n(t)|
\]
and so by the estimate \eqref{eq:chi_derivative_est} we have for $n \geq N$:
\[
\tau^{-1-\frac{\e}{4}}_n \mu \big( t \in [0,T]: \twonorm{\p_t u} > \tau_n^{-1-\frac{\e}{4}} \big) \les \tau^{-1-\frac{\e}{4}}_n \tau^{1+\frac{\e}{4}}_{n+1} \les 1.
\]
For the inertia term, using lemma \ref{lemma:build_block_second_construction}, \ref{lemma:build_block_second_construction_2}, we obtain
\begin{align*}
\twonorm{\divv (u\otimes u)} &\les \sum_{n =N}^\infty \chi^2_n(t) \twonorm{\nabla W_n} \Linfnorm{W_n} \\
&\les \sumN \chi^2_n(t) \tilde{\l}^{1+\frac{\b}{2}}_n,
\end{align*}
whence for any $n \geq N$,
\begin{align*}
\tl_n^{1+\frac{\b}{2}} \mu \big( t \in [0,T]: \twonorm{\divv(u\otimes u)} > \tl^{1+\frac{\b}{2}}_n \big) &\les \tl^{1+\frac{\b}{2}}_n |\tau_n -\tau_{n+1}| \\
&\les \tl^{1+\frac{\b}{2}}_n \tau_n \les 1.
\end{align*}
These establish \ref{lemma:forced_Euler_properties_4}.
\end{proof}
\begin{Proposition}\label{prop:Onsager_space_second_Euler}
The Euler solution $u$ defined in \eqref{def:second_construction_Euler} satisfies
\[
u \in L^3\big([0,T];B^{\frac{1}{3},b_n}_{3,\infty}(\T^3)\big)
\quad \text{for any } \{b_n\}_{n\in\mathbb{N}} \in \ell^1_+.
\]
Moreover,
\begin{equation}\label{lemma:Onsager_space_second_Euler_1}
\limsup_{q \to \infty}
\int_0^T
\tilde{\lambda}_q \|\Delta_q u(t)\|_{L^3}^3\,dt
\lesssim 1.
\end{equation}
\end{Proposition}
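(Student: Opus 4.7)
\medskip

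\noindent\textbf{Proof plan for Proposition~\ref{prop:Onsager_space_second_Euler}.}
The whole argument reduces to a direct frequency-by-frequency estimate on
$u = \sum_{n \ge N} \chi_n(t)W_n(x)$, combined with a Hölder inequality.
The key structural observation is that in Section~\ref{Sec_Second_construction} we use the \emph{standard} dyadic partition of unity, i.e.\ $\tilde\lambda_q = 2^q$, so the Fourier support $\{\tilde\lambda_n \le |k| \le \tilde\lambda_{n+1}\}$ of $W_n$ given by Lemma~\ref{lemma:build_block_second_construction}\ref{lemma:build_block_second_construction_1} overlaps only the Littlewood--Paley cutoffs $\varphi_q$ with $q \in \{n,n+1\}$. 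Hence
\[
\Delta_q u(\cdot,t) = \chi_q(t)\,\Delta_q W_q + \chi_{q-1}(t)\,\Delta_q W_{q-1},
\]
and by Lemma~\ref{lemma:build_block_second_construction}\ref{lemma:build_block_second_construction_2},
\[
\|\Delta_q u(\cdot,t)\|_{L^3} \lesssim \big(\chi_q(t)+\chi_{q-1}(t)\big)\,\tilde\lambda_q^{\beta/6}.
\]

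\medskip

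\noindent The first step is to establish the sharp bound \eqref{lemma:Onsager_space_second_Euler_1}. Since $|\chi_n| \le 1$ and, by the piecewise description of $\chi_n$ coming from \eqref{def:time_gluing_second_construction}, the temporal support of $\chi_n$ is contained in an interval of length $\le \tau_n + c_\varepsilon \tau_n^{1+\varepsilon/4} \lesssim \tau_n$, we obtain
\[
\int_0^T \chi_n(t)^3\,dt \;\le\; |\operatorname{supp}\chi_n| \;\lesssim\; \tau_n \;\lesssim\; \tilde\lambda_n^{-1-\beta/2},
\]
where the last bound uses the definition \eqref{def:second_construction_time_step} of $\tau_n$. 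Cubing the $L^3$ block estimate and integrating yields
\[
\int_0^T \tilde\lambda_q\,\|\Delta_q u(t)\|_{L^3}^3\,dt
\;\lesssim\;
\tilde\lambda_q \cdot \tilde\lambda_q^{\beta/2} \cdot \tilde\lambda_q^{-1-\beta/2} \;=\; 1,
\]
uniformly in $q$, which is exactly \eqref{lemma:Onsager_space_second_Euler_1}.

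\medskip

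\noindent The second step deduces the weighted Besov statement from \eqref{lemma:Onsager_space_second_Euler_1} by a discrete Hölder inequality with exponents $(3/2,3)$: writing $b_{q+2} = b_{q+2}^{2/3}\cdot b_{q+2}^{1/3}$,
\[
\sum_{q\ge -1} b_{q+2}\,\tilde\lambda_q^{1/3}\|\Delta_q u(t)\|_{L^3}
\;\le\;
\Big(\sum_{q\ge -1} b_{q+2}\Big)^{\!2/3}
\Big(\sum_{q\ge -1} b_{q+2}\,\tilde\lambda_q\|\Delta_q u(t)\|_{L^3}^3\Big)^{\!1/3}.
\]
Cubing and integrating in time,
\[
\int_0^T \|u(t)\|_{B^{1/3,b_n}_{3,\infty}}^3 dt
\;\lesssim\;
\|b\|_{\ell^1}^2 \sum_{q\ge -1} b_{q+2}\!\int_0^T \tilde\lambda_q\|\Delta_q u(t)\|_{L^3}^3 dt
\;\lesssim\;
\|b\|_{\ell^1}^3 < \infty,
\]
using \eqref{lemma:Onsager_space_second_Euler_1} on each summand. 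This gives $u \in L^3([0,T]; B^{1/3,b_n}_{3,\infty})$ for every $\{b_n\}\in\ell^1_+$.

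\medskip

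\noindent The proof is essentially arithmetic once the correct matching between the Fourier support of the building blocks and the dyadic scale is identified; there is no genuine analytic obstacle. The only point that requires care is a clean accounting of the temporal support of $\chi_n$ under the slightly enlarged cutoff coming from the $c_\varepsilon\tau_n^{1+\varepsilon/4}$ transition regions in \eqref{def:time_gluing_second_construction}, but these only multiply $\tau_n$ by a bounded factor and do not change the scaling $\tau_n \sim \tilde\lambda_n^{-1-\beta/2}$ which is precisely calibrated to make the Onsager-critical bound \eqref{lemma:Onsager_space_second_Euler_1} saturate at order one.
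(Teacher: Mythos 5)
Your proposal is correct and follows essentially the same route as the paper: both arguments rest on the frequency localization $\Delta_q u(t)=\chi_q(t)W_q$ (up to the adjacent block), the $L^3$ scaling $\|W_n\|_{L^3}\sim\tilde\lambda_n^{\beta/6}$ from Lemma~\ref{lemma:build_block_second_construction}, and the temporal support bound $|\operatorname{supp}\chi_n|\lesssim\tau_n\sim\tilde\lambda_n^{-1-\beta/2}$, which together give the order-one flux bound \eqref{lemma:Onsager_space_second_Euler_1}. Your only departures are cosmetic refinements — you track the contribution of $W_{q-1}$ to $\Delta_q u$ explicitly, and you deduce the weighted Besov membership from the uniform flux bound via a discrete H\"older inequality rather than estimating $\|u(t)\|_{B^{1/3,b_n}_{3,\infty}}^3$ directly as the paper does — both of which are sound and, if anything, slightly tidier.
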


\begin{proof}
By
Lemma~\ref{lemma:build_block_second_construction},\ref{lemma:build_block_second_construction_2},
using that $\Delta_n W_n = W_n$ and $\operatorname{supp}\chi_n$ has size
$|\tau_n-\tau_{n+1}| \sim \tilde{\lambda}_n^{-1-\beta/2}$ yields
\begin{align*}
\int_0^T \|u(t)\|_{B^{\frac{1}{3},b_n}_{3,\infty}}^3\,dt
&\le
\sum_{n=N}^\infty
|\operatorname{supp}\chi_n|\,
b_n \,
\tilde{\lambda}_n \|\Delta_n W_n\|_{L^3}^3
\\
&\lesssim
\sum_{n=N}^\infty
\tilde{\lambda}_n^{-1-\frac{\beta}{2}}
\,b_n \,
\tilde{\lambda}_n \big(\tilde{\lambda}_n^{\beta(\frac{1}{2}-\frac{1}{3})}\big)^3
\\
&\lesssim
\sum_{n=N}^\infty b_n < \infty,
\end{align*}
which establishes the Besov membership. Next, using
Lemma~\ref{lemma:build_block_second_construction}\ref{lemma:build_block_second_construction_1},
the Littlewood–Paley blocks are localized:
\[
\Delta_q u(t) = \chi_q(t)\,W_q.
\]
Thus
\begin{align*}
\int_0^T
\tilde{\lambda}_q \|\Delta_q u(t)\|_{L^3}^3\,dt
&=
\int_0^T
\tilde{\lambda}_q\,\chi^3_q(t) \|W_q\|_{L^3}^3\,dt
\\
&\lesssim
\tilde{\lambda}_q \big(\tilde{\lambda}_q^{\beta(\frac{1}{2}-\frac{1}{3})}\big)^3
|\operatorname{supp}\chi_q|
\\
&\lesssim
\tilde{\lambda}_q^{1+\frac{\beta}{2}}
\tilde{\lambda}_q^{-1-\frac{\beta}{2}}
\lesssim 1.
\end{align*}
Taking $\limsup_{q\to\infty}$ of the above yields
\eqref{lemma:Onsager_space_second_Euler_1}.
\end{proof}
\begin{Proposition}\label{prop:zero_AW}
For the solution $u$ defined by \eqref{def:second_construction_Euler}, the energy
jump equals the amount of anomalous dissipation at time $T$, that is,
\begin{equation}\label{prop:zero_AW_1}
\Bigg|
\frac{1}{2}\|u(T)\|_{L^2}^2
-
\frac{1}{2}\lim_{t\to T^-}\|u(t)\|_{L^2}^2
\Bigg|
=
\frac{1}{2}
=
\Pi(T).
\end{equation}
Moreover, the anomalous work performed by $f$ vanishes:
\begin{equation}\label{prop:zero_AW_2}
\Phi(T)=0.
\end{equation}
\end{Proposition}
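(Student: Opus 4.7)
The first identity in \eqref{prop:zero_AW_1} is immediate from Lemma~\ref{lemma:forced_Euler_properties}\ref{lemma:forced_Euler_properties_2}: $\|u(T)\|_{L^2}=0$ while $\|u(t)\|_{L^2}=1$ for every $t<T$, so the absolute energy jump is $\tfrac12$. My plan for the nontrivial content is to compute $\Pi_q(T,h)$ directly from the construction \eqref{def:second_construction_Euler}, then read off $\Phi(T)=0$ from the truncated energy equality \eqref{eq:Energy_equality_Q_T}.

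For the direct computation of $\Pi_q(T,h)$, I will expand
\[
\langle \div(u\otimes u),\Ss_q\Ss_q u\rangle
=\sum_{n,m,l}\chi_n\chi_m\chi_l\int_{\T^3}\div\Ss_q(W_n\otimes W_m)\cdot\Ss_q W_l\,dx,
\]
and invoke Lemma~\ref{lemma:build_block_second_construction}\ref{lemma:build_block_second_construction_1} to collapse $\Ss_q W_l=W_l$ for $l\le q$ and $\Ss_q W_l=0$ for $l$ sufficiently larger than $q$, together with the selection rule in Lemma~\ref{lemma:build_block_second_construction}\ref{lemma:build_block_second_construction_3}, which localises the diagonal flux at the shell $n=q$. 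The time cut-offs force $\chi_n\chi_m\equiv 0$ for $|n-m|\ge 2$, and the adjacent cross-terms $|n-m|=1$ are concentrated on windows of length $c_\varepsilon\tau_{n+1}^{1+\varepsilon/4}$, negligible in comparison to the plateau of length $\sim\tau_n$. The surviving $(q,q,q)$-triple then gives
\[
\Pi_q(T,h)=2\,\tilde\lambda_q^{1+\beta/2}\int_{T-h}^{T}\chi_q^3(t)\,dt+o(1),
\]
and the calibration \eqref{def:second_construction_time_step} yields $\int\chi_q^3\,dt=(\tau_q-\tau_{q+1})(1+o(1))=\tfrac14\tilde\lambda_q^{-1-\beta/2}(1+o(1))$, so $\Pi_q(T,h)\to\tfrac12$ and therefore $\Pi(T)=\tfrac12$.

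To obtain $\Phi(T)=0$, I first observe that $\langle f(t),u(t)\rangle\equiv 0$ on $(0,T)$: the self-pairing $\int\div(u\otimes u)\cdot u\,dx$ vanishes since $u$ is divergence-free, while $\langle\p_t u,u\rangle=\tfrac12\tfrac{d}{dt}\|u\|_{L^2}^2=0$ because $\|u(t)\|_{L^2}=1$ on $[0,T)$. Hence $\lim_{t_1\to T^-}\int_{T-h}^{t_1}\langle f,u\rangle\,d\tau=0$, so $\Phi_q(T,h)=\int_{T-h}^{T}\langle\Ss_q f,\Ss_q u\rangle\,d\tau$. Since $\chi_l(T)=0$ for every $l$, the projection $\Ss_q u$ is smooth in time on $[0,T]$ with $\|\Ss_q u(T)\|_{L^2}=0$, so \eqref{eq:Energy_equality_Q_T} reads
\[
-\tfrac12\|\Ss_q u(T-h)\|_{L^2}^2=-\Pi_q(T,h)+\Phi_q(T,h).
\]
Sending $q\to\infty$, using $\|\Ss_q u(T-h)\|_{L^2}^2\to 1$ and $\Pi_q(T,h)\to\tfrac12$, forces $\Phi_q(T,h)\to 0$, which yields $\Phi(T)=0$.

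The main technical hurdle is the dominance claim in the flux computation. While Lemma~\ref{lemma:build_block_second_construction}\ref{lemma:build_block_second_construction_3} pins down the diagonal contribution at $(n,n,n)$ exactly in the limit, one must still verify rigorously that (i) the spatial off-diagonal triples $(n,m,l)$ with not all indices equal contribute either zero, by dyadic Fourier separation of the $W_n$, or $o(1)$ after integration; and (ii) the transition windows where $\chi_n$ and $\chi_{n\pm 1}$ overlap have flux contribution of size $O(\tau_n^{\varepsilon/4})$ relative to the plateau, so that it vanishes as $q\to\infty$. Both items rest on the precise design of the building blocks from \cite{CL21} and on the choice of $\varepsilon$ in \eqref{bound:time_step_condition}.
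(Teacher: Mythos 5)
Your proposal is correct and follows essentially the same route as the paper: the flux $\Pi_q(T,h)\to\tfrac12$ is obtained exactly as in the paper's proof (Fourier localization of the $W_n$, the diagonal flux normalization of Lemma~\ref{lemma:build_block_second_construction}\ref{lemma:build_block_second_construction_3}, the plateau length $\tau_q-\tau_{q+1}=\tfrac14\tilde\lambda_q^{-1-\beta/2}$, and negligible adjacent/transition contributions, which the paper packages into the error $\iota_q$), and $\Phi(T)=0$ then follows from the energy balance together with $\langle f,u\rangle=0$ on $(0,T)$. Your only deviations are cosmetic: you get $\langle f,u\rangle\equiv0$ pointwise rather than from the integrated energy equality, and you read off $\Phi_q(T,h)\to0$ from the fixed-$q$ truncated equality \eqref{eq:Energy_equality_Q_T} instead of the limsup jump formula \eqref{eq:energy_jump_2}, which if anything is a slightly cleaner way to conclude $\limsup_q|\Phi_q(T,h)|=0$.
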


\begin{proof}
By lemma \ref{lemma:forced_Euler_properties} the solution $(u,f)$ satisfies the energy jump formula \eqref{eq:energy_jump_2}:
\begin{align*}
\haf \twonorm{u(T)}^2 - \haf  \twonorm{u(T-h)}^2 &= \lim_{t \to T^-}\int_{T-h}^{t} \<f,u \> d\tau + \limsup_{q \to \infty} \Big( -\Pi_q(T,h) + \Phi_q (T,h) \Big) \\
&= -\haf.
\end{align*}
Therefore it suffices to show that for any $h > 0$ small enough,
\begin{equation}\label{proof:prop_zero_AW_1} 
\lim_{q \to \infty} \Pi_q(T,h) = \haf.
\end{equation}
\noindent From the definition \eqref{def:time_gluing_second_construction} we see that $\chi_n \cap \chi_m = \emptyset$ when $|n-m| > 1$ and so by lemma \ref{lemma:build_block_second_construction}, \ref{lemma:build_block_second_construction_1}, for any $t \in (0,T)$ it holds that 
\[
\supp \Hat{u}(t) \subset \{k: \tilde{\lambda}_{n-1} \leq |k| \leq \tilde{\lambda}_{n+1} \}.
\]
Thus if $q > n$ then $\Ss_q u(t) = u(t)$, therefore $\Pi_q(T,h) = 0$ since $u(t)$ is incompressible. On the other hand if $q < n-1$, then $\Ss_q u(t) = 0$ and again $\Pi_q (T,h) = 0$. As a result we can write
\begin{equation}\label{proof:prop_zero_AW_2}
\int_{\T^3} \divv (u \otimes u) \Ss_q (\Ss_q u) dx = \chi^3_q (t) \int_{\T^3} \divv \Big[ \Ss_q \big( W_q \otimes W_q \big) \Big] \cdot (\Ss_q W_q) dx + \iota_q (t),  
\end{equation}
where 
\[
\iota (t) := \int_{\T^3} \divv \Ss_q \Big ( \sum_{j = q-1}^{q+1} \chi_j W_j \otimes \sum_{j = q-1}^{q+1} \chi_j W_j \Big) \cdot \Big( \Ss_q \sum_{j = q-1}^{q+1} \chi_j W_j \Big) dx - \chi^3_q (t) \int_{\T^3} \divv \Big[ \Ss_q \big( W_q \otimes W_q \big) \Big] \cdot (\Ss_q W_q) dx.
\]
Due to lemma \ref{lemma:build_block_second_construction}, \ref{lemma:build_block_second_construction_3}, when all indices in the sum are equal to $q-1$ or $q+1$, the first term becomes zero. Then by \ref{lemma:build_block_second_construction_2} in the same lemma one can obtain the following estimate:
\begin{equation}\label{proof:prop_zero_AW_3}
|\iota_q (t)| \les \tilde{\l}^{1+\frac{\b}{2}}_q \Big(\chi_{q-1} (t) + \chi_{q+1} (t) \Big) \chi^2_q(t).
\end{equation}
Integrating \eqref{proof:prop_zero_AW_2} over time $T-h$ to $T$, we arrive at
\begin{align}
\Pi_q(T,h) &= \int_{T-h}^T \int_{\T^3} \divv \Big[ \Ss_q (u \otimes u) \cdot (\Ss_q u) \Big] dx dt \notag \\ 
& = \int_{T-h}^T \chi^3_q (t) \int_{\T^3} \divv \Big[ \Ss_q \big( W_q \otimes W_q \big) \Big] \cdot (\Ss_q W_q) dx dt + \int_{T-h}^T \iota_q (t) dt. \label{proof:prop_zero_AW_4}
\end{align}
From the definition of $\chi_n(t)$ and $\tau_n$ one sees that 
\begin{align*}
&|\{t: \chi_q(t)= 1 \}| = \tau_q - \tau_{q+1} - c_\e \tau^{1+\frac{\e}{4}}_{q+1} = \frac{1}{4} \tl^{-1-\frac{\b}{2}}_q - c_\e \tau^{1+\frac{\e}{4}}_{q+1}, \\
&|\{t: 0< \chi_q (t) < 1 \}| \leq c_\e \big( \tau^{1+\frac{\e}{4}}_{q} + \tau^{1+\frac{\e}{4}}_{q+1}  \big) \leq 2 c_\e \tau^{1+\frac{\e}{4}}_{q},
\end{align*}
whence 
\begin{align*}
 &\lim_{q \to \infty} \int_{\{ \chi_q(t) = 1\} } \int_{\T^3} \divv \Big[ \Ss_q \big( W_q \otimes W_q \big) \Big] \cdot (\Ss_q W_q) dx dt \\
 =&\lim_{q \to \infty} \haf \bigg ( \haf \tilde{\l}^{-1-\frac{\b}{2}}_q \int_{\T^3} \divv \Big[ \Ss_q \big( W_q \otimes W_q \big) \Big] \cdot (\Ss_q W_q) dx - c_\e \tau^{1+\frac{\e}{4}}_{q+1} \int_{\T^3} \divv \Big[ \Ss_q \big( W_q \otimes W_q \big) \Big] \cdot (\Ss_q W_q) dx \bigg) \\
 = & \ \ \haf
\end{align*}
due to lemma \ref{lemma:build_block_second_construction}, \ref{lemma:build_block_second_construction_3}. Additionally,
\begin{align*}
&\lim_{q \to \infty} \Big| \int_{\{ 0< \chi_q(t) < 1\} } \int_{\T^3} \divv \Big[ \Ss_q \big( W_q \otimes W_q \big) \Big] \cdot (\Ss_q W_q) dx dt \Big| \\
\leq & 2c_\e \lim_{q \to \infty} \tau_q^{1+\frac{\e}{4}} \Big |  \int_{\T^3} \divv \Big[ \Ss_q \big( W_q \otimes W_q \big) \Big] \cdot (\Ss_q W_q) dx \Big| = 0.
\end{align*}
The above yields 
\begin{equation}\label{proof:prop_zero_AW_5}
\lim_{q \to \infty} \int_{T-h}^T \chi^3_q (t) \int_{\T^3} \divv \Big[ \Ss_q \big( W_q \otimes W_q \big) \Big] \cdot (\Ss_q W_q) dx dt = \haf.
\end{equation}
For the second term of \eqref{proof:prop_zero_AW_2}, we have 
\begin{equation}\label{proof:prop_zero_AW_6}
\lim_{q \to \infty} \Big| \int_{T-h}^T \iota_q(t) dt \Big| \les \lim_{q \to \infty} \tilde{\l}^{1+\frac{\b}{2}}_q\int_{\supp \chi_{q-1} \cap \supp \chi_q} dt  \les \lim_{q \to \infty}\tilde{\l}^{1+\frac{\b}{2}}_q \tau^{1+\frac{\e}{4}}_q = 0,
\end{equation}
in which we used \eqref{proof:prop_zero_AW_3}. Therefore from \eqref{proof:prop_zero_AW_5} and \eqref{proof:prop_zero_AW_6} we established \eqref{proof:prop_zero_AW_1}.
\end{proof}
\smallskip
\section{Proof of Theorem \ref{main_Theorem_3}}\label{sec:proof_main_Theorem_3}

\noindent For $m \in \N$, we define
\begin{equation}\label{def:u_m_second_construction}
u^m (t) := \sum_{k = N}^{N+m} \chi_k(t) W_k (x) 
\end{equation}
and 
\begin{equation}\label{def:f_nu_m_second_construction}
    f^{\nu_m} := \p_t u^m + \divv (u^m \otimes u^m) + \nu_m \Delta u^m
\end{equation}
where
\begin{equation}\label{def:nu_m_second_construction}
    \nu_m := \left ( 2 \int_0^T \twonorm{\nabla u^m(t)}^2 dt  \right )^{-1}.
\end{equation}
It follows from the lemma \ref{lemma:build_block_second_construction} that the size of $\nu_m$ is 
\[
\nu_m \sim \tilde{\l}^{-1+\frac{\b}{2}}_m, \ \ \b < 2
\]
and so we have $\nu_m \to 0$ as $m \to \infty$. Now, for all $t \in \left[ 0,T \right)$, we have 
\begin{align*}
    \twonorm{u^m(t) - u(t)}^2 = \sum_{k = N+m+1}^\infty \chi^2_k(t) \twonorm{W_k}^2 = \sum_{k=N+m+1}^\infty \chi^2_k(t) \to 0, \ \ \ \text{as} \ \ m \to \infty.
\end{align*}
For $t = T$, we have that $\twonorm{u(T)} = \twonorm{u^m(T)} = 0$ for all $m \in \N$, so the convergence follows as well. Hence, $u^m \to u$ in $L^\infty([0,T];L^2(\T^3))$. For the convergence $f^{\nu_m} \to f$ in $L^1([0,T];L^r(\T^3))$, $r < 2$, it suffices to show that, as $m \to \infty$, 
\begin{subequations}\label{eq:convergence_of_force_second_construction}
\begin{align}
\|\p_t u^m - \p_t u\|_{L^1_t L^r_x} &\to 0, \label{eq:convergence_of_force_second_construction_1} \\
\|\div(u^m \otimes u^m)-\div(u \otimes u)\|_{L^1_t L^r_x} &\to 0 ,\label{eq:convergence_of_force_second_construction_2} \\
\nu_m \|\Delta u^m \|_{L^1_t L^r_x} &\to 0 .\label{eq:convergence_of_force_second_construction_3}
\end{align}
\end{subequations}
To see \eqref{eq:convergence_of_force_second_construction_1}, we use lemma \ref{lemma:build_block_second_construction}, \ref{lemma:build_block_second_construction_2} to obtain
\begin{align*}
    \|\p_t u^m -\p_t u \|_{L^1_t L^r_x} &\les \sum_{n=N+1+m}^\infty \int_0^T |\chi'_n(t)|\|W_n\|_{L^r} dt \les \sum_{n = N+1+m}^\infty \tilde{\l}^{\b(\haf-\frac{1}{r})}_n \tau^{1+\frac{\e}{4}}_n \tau^{-(1+\frac{\e}{4})}_n, \\
    &\les \sum_{n = N+1+m}^\infty \tilde{\l}^{\b(\haf-\frac{1}{r})}_n \to 0, \ \ \text{as} \ m \to \infty,  
\end{align*}
for $r < 2$. The convergence \eqref{eq:convergence_of_force_second_construction_2} holds due to the following
\begin{align*}
    \rnorm{u^m \cdot \nabla u^m - u \cdot \nabla u} &\leq \rnorm{(u^m - u) \cdot \nabla u^m} + \rnorm{u \cdot \nabla (u^m-u)}, \\ 
    &\leq \|u^m - u\|_{L^2} \| \nabla u^m\|_{L^{\frac{2r}{2-r}}} + \|u\|_{L^2} \|\nabla u^m -\nabla u\|_{L^{\frac{2r}{2-r}}}.
\end{align*}
Note that for any $t \in [0,T]$
\[
\|u^m(t) - u(t) \|_{L^2(\T^3)} \les \sum_{n >m} |\chi_n(t)| \| W_n \|_{L^2} = \sum_{n >m} |\chi_n(t)| \to 0, \ \ \text{as} \ m \to \infty.
\]
by the definition of $\chi_n(t)$. Meanwhile, 
\begin{align*}
    \int_0^T \| \nabla u^m(t)\|_{L^{\frac{2r}{2-r}}} dt &\les \sum_{n=N}^m \int_0^T |\supp \chi_n| \| \nabla W_n \|_{L^{\frac{2r}{2-r}}} dt \les \sum_{n =N }^m \int_0^T |\tau_n| \tilde{\l}_n \tilde{\l}_n^{\b(\haf - \frac{2-r}{2r})} dt \\
    &\les \sum_{n = N}^m \tilde{\l}_n^{-(1+\frac{\b}{2})} \tilde{\l}_n \tilde{\l}_n^{\b(\haf-\frac{1}{r}+\haf)}\les \sum_{n = N}^m \tilde{\l}^{\b(\haf-\frac{1}{r})}_n,
\end{align*}
therefore we have 
\[
\sup_{m > N}  \int_0^T \| \nabla u^m(t)\|_{L^{\frac{2r}{2-r}}} dt < \infty.
\]
Similarly, 
\begin{align*}
    \int_0^T \|\nabla u^m -\nabla u\|_{L^{\frac{2r}{2-r}}} dt \les \sum_{n > m} \int_0^T |\tau_n| \|\nabla W_n \|_{L^{\frac{2r}{2-r}}} dt \les \sum_{n >m } \tilde{\l}^{\b(\haf - \frac{1}{r})}_n \to 0, \ \ \text{as} \ m \to \infty. 
\end{align*}
Hence, 
\begin{align*}
    \|u^m \cdot \nabla u^m - u \cdot \nabla u\|_{L^1_t L^r} &= \int_0^T \|u^m \cdot \nabla u^m - u \cdot \nabla u\|_{L^r_x}dt, \\
    &\leq \int_0^T \|(u^m -u) \cdot \nabla u^m \|_{L^r}dt + \int_0^T \|u \cdot \nabla (u^m - u)\|_{L^r}dt, \\
    &\leq  \|u^m -u \|_{L^\infty_t L^2}  \bigg( \sup_{m > N}  \int_0^T \| \nabla u^m(t)\|_{L^{\frac{2r}{2-r}}} dt\bigg), \\
    &+ \|u\|_{L^\infty_t L^2} \int_0^T \|\nabla u^m -\nabla u \|_{L^{\frac{2r}{2-r}}} dt \to 0, \ \ \text{as} \ m \to \infty.
\end{align*}
Recalling from \eqref{def:nu_m_second_construction} that $\nu_m$ is of the size $\nu \sim \tilde{\l}^{-1+\frac{\b}{2}}_m$, the convergence \eqref{eq:convergence_of_force_second_construction_3} follows as before, thanks to the lemma \ref{lemma:build_block_second_construction}:
\begin{align*}
    \nu_m \int_0^T \|\Delta u^m(t) \|_{L^r} dt &\les \nu_m \sum_{n=N}^m \int_0^T |\supp \chi_n(t)| \tilde{\l}^2_n \|W_n\|_{L^r} dt, \\
    &\les \nu_m \sum_{n =N}^m |\tau_n| \tilde{\l}^2_n \tilde{\l}^{\b(\haf-\frac{1}{r})}_n \les \tilde{\l}^{-1+\frac{\b}{2}}_m \sum_{n=N}^m \tilde{\l}^{-1-\frac{\b}{2}+2+\frac{\b}{2}-\frac{\b}{r}}_n \les \tilde{\l}^{-1+\frac{\b}{2}+1-\frac{\b}{r}}_m \to 0,
\end{align*}
as $m \to \infty$. These prove Theorem \ref{main_Theorem_3},  \eqref{main_Theorem_3_1}. In addition, \eqref{main_Theorem_3_2} and \eqref{main_Theorem_3_3} are implied by proposition \ref{prop:Onsager_space_second_Euler} and \ref{prop:zero_AW}. Finally, since 
\[
\twonorm{u_\initial}^2 = \twonorm{u^m(0)}^2 = \sum_{n=N}^{N+m} \chi^2_n(0) \twonorm{W_n}^2 = \sum_{n=N}^{N+m} \chi^2_n(0) = 1
\]
and by \eqref{def:nu_m_second_construction} 
\[
2 \nu_m \int_0^T \twonorm{\nabla u^m(t)}^2 dt = 1, \ \ \text{for all} \ m > N,
\]
we have that 
\[
\lim_{m \to \infty} 2 \nu_m \int_0^T \twonorm{\nabla u^m(t)}^2dt = 1 = \twonorm{u_\initial}^2.
\]
The zero limiting work is then followed by the energy equality for the NSE solutions and convergence of $u^m$ in $L^\infty(0,T;L^2(\T^3))$, which concludes the proof of the Theorem \ref{main_Theorem_3}. \qed

\bibliographystyle{plain}
\bibliography{DA_With_Exp_Timescale}

\end{document}